\documentclass[final,leqno,showlabe]{siamltex}
\usepackage[bookmarksopen,bookmarksopenlevel=0,bookmarksdepth=2]{hyperref}
\usepackage{appendix}
\usepackage{amsmath}
\usepackage{amssymb}
\usepackage{graphicx}
\usepackage{doi}
\usepackage{cleveref}
\usepackage[usenames]{color}
\usepackage{wrapfig}
\usepackage{stmaryrd}
\usepackage{array} 
\usepackage{mathrsfs}
\usepackage{multirow}
 \usepackage{txfonts}
\usepackage{epstopdf}
\allowdisplaybreaks[4]

\SetSymbolFont{stmry}{bold}{U}{stmry}{m}{n}

\setlength{\parskip}{1\parskip}

\usepackage{enumitem}
\setenumerate[1]{itemsep=0pt,partopsep=0pt,parsep=\parskip,topsep=5pt}
\setitemize[1]{itemsep=0pt,partopsep=0pt,parsep=\parskip,topsep=5pt}
\setdescription{itemsep=0pt,partopsep=0pt,parsep=\parskip,topsep=5pt}

\title{Structure-preserving, weighted implicit-explicit schemes for
multi-phase incompressible Navier-Stokes/Darcy coupled nonlocal Allen-Cahn model}

\author{Meng Li, Ke Wang, Nan Wang\thanks{School of Mathematics and Statistics, Zhengzhou University, Zhengzhou 450001, China. 
Corresponding author (*).
Email addresses: limeng@zzu.edu.cn (M. Li), nwang@zzu.edu.cn (N. Wang$^*$).}}

\date{}
\begin{document}
\sloppy
\maketitle

\begin{abstract}
A multitude of substances exist as mixtures comprising multiple chemical components in the natural world. These substances undergo morphological changes under external influences.
In the phase field model coupled with fluid flow, the dynamic movement and evolution of the phase interface intricately interact with the fluid motion.
This article focuses on the N-component models that couple the conservative Allen-Cahn equation with two types of incompressible fluid flow systems: the Navier-Stokes equation and the Darcy equation.
By utilizing the scalar auxiliary variable method and the projection method,
we innovatively construct two types of structure-preserving weighted implicit-explicit schemes for the coupled models, resulting in fully decoupled linear systems and second-order accuracy in time. The schemes are proved to be mass-conservative. In addition, with the application of $G$-norm inspired by the idea of $G$-stability, we rigorously establish its unconditional energy stability. Finally, the performance of the proposed scheme is verified by some numerical simulations.
\end{abstract}


\begin{keywords}
N-component models; Conservative Allen-Cahn equation; Navier-Stokes equation; Darcy equation; Mass conservation; Energy dissipation
\end{keywords}

\begin{AMS}
65N30, 65M12, 65N12, 65M70
\end{AMS}

\section{Introduction}

The phase-field method \cite{caginalp1986phase,karma1996phase} is a renowned modeling approach extensively utilized across diverse disciplines, such as materials science \cite{warren1995prediction}, biophysics \cite{shen2019novel} and geophysics \cite{alpak2016phase}, to simulate a wide range of interface dynamics. It can effectively capture a variety of interface phenomena, including morphological evolution \cite{provatas2007using}, phase transitions \cite{chen2008phase}, and phase separation \cite{zhu2016modelling}. However, when the motion of the fluid is also taken into account, the interaction between fluid dynamics and the evolution of the phase interface makes the interface phenomenon complex.
This kind of coupling problem has been paid more and more attention by many researchers in recent years (see \cite{cai2017error,jeong2017conservative,biswas2020maximum}). Compared to single-component systems, multi-component systems have a wider range of applications
due to the prevalence of mixtures composing of two or even multiple components. In multi-phase simulations, a phase field variable is sufficient to describe the behavior of each component in space \cite{kim2012phase,nestler2011phase}.
This phase field variable represents the spatial distribution of the component, including information about density, interface position and shape. Based on the above, we in this work consider two multi-phase incompressible fluid flow systems coupled nonlocal Allen-Cahn (AC) model \cite{allen1979microscopic}, where the fluid motions are described by Navier-Stokes equation \cite{glowinski1992finite} and Darcy equation, respectively.
\par
As the typical  gradient flow models, the AC model and Cahn-Hilliard (CH) model \cite{cahn1958free} have always aroused great interest. While both models adhere to the energy dissipation law, they differ in terms of mass conservation properties. The CH model, serving as an $H^{-1}$-gradient flow of free energy, inherently ensures mass conservation.
On the contrary, the AC model, operating as an $L^2$-gradient flow of free energy, does not conserve mass. A novel modification known as the conservative AC (CAC) model, introduced in \cite{rubinstein1992nonlocal}, addresses this limitation by adding a nonlocal Lagrange multiplier.
The CAC model, upholding the mass conservation while retaining the characteristic of energy dissipation, finds extensive utilization across numerous studies \cite{li2021unconditionally,xu2023high,lee2022high,CHAI2018631}.
\par

In recent years, a large number of numerical methods have been used to solve the AC equation, such as the convex splitting method \cite{guan2014convergent,baskaran2013convergence,shen2012second}, the Invariant Energy Quadratization (IEQ) method \cite{YANG2016294,YANG2019316,ZHANG2020112719} and the scalar auxiliary variable (SAV) method \cite{SHEN2018407,shen2019new,li2020stability}. Additionally, there are a considerable amount of numerical works on the Navier-Stokes equation and the Darcy equation \cite{chu2023optimal,MR3802271,li2021discontinuous}. While the Darcy equation characterizes fluid flow in porous media or the Hele-Shaw cell, both equations share similar forms and fall within the realm of incompressible fluid flow problems in fluid mechanics.
Such problems are often solved using projection method, originally proposed by Chorin in \cite{chorin1968numerical}. In general, the projection method for incompressible flows can be categorized into three types, namely the pressure-correction method, the velocity-correction method, and the consistent splitting method, see \cite{guermond2006overview} and the references therein for details.
Despite the availability of these three projection methods, the pressure-correction method is preferred in more works \cite{han2018second,LI2023107055,HAN2015139,li2022efficient}.
This method separates the velocity field and the pressure field, and couples them through a pressure correction equation, thereby effectively resolving the evolution of the flow field.
\par

The main contributions of this work include:
\begin{itemize}
    \item We innovatively propose a weighted implicit-explicit (IMEX) scheme with arbitrary weighted parameter $ \theta \in [1/2,1] $ for the incompressible fluid flow systems coupled gradient flow model,
    including the Navier-Stokes equations coupled with the CAC model (NS-CAC) and the Darcy equations coupled with the CAC model (D-CAC) \cite{YANG2021113597}. When $ \theta = 1/2$ and $ \theta = 1 $, the scheme degenerates into the CN scheme \cite{han2017numerical,XIA2021113987} and the BDF2 scheme \cite{yang2021novel,yang2022highly}, respectively.
    \item
    To capture generality, our research concentrates on the numerical methods tailored for multi-component problems, encompassing the N-component NS-CAC and N-component D-CAC models. These models play a pivotal role in deciphering intricate fluid dynamics phenomena entailing interactions among multiple components.
    \item
    To numerically solve the two N-component models, we employ the SAV method to deal with the CAC model and the projection method to deal with the two N-component incompressible flow models. In this approach, linear terms are treated implicitly while nonlinear terms are treated explicitly. This yields a completely decoupled structure and an easily implementable algorithm process, as it only necessitates solving several linear equations with constant coefficients at each time step.
    \item
  For any values of the parameter $\theta$, our scheme not only ensures second-order accuracy in time, but also maintains structure-preserving properties, including the mass conservation and energy stability. Different from \cite{yang2021novel,yang2022highly}, due to the involvement of the weighted parameter $\theta$, we will employ $G$-norm to demonstrate the energy stability.
\end{itemize}
\par
The remaining parts of this article are arranged as follows. In the next section, we will briefly introduce N-component NS-CAC model and N-component D-CAC model. In Section \ref{sec-3}, we construct a second-order weighted IMEX scheme and provide detailed implement steps to solve them. Additionally, energy stability and mass conservation are also proved. In Section \ref{sec-4}, several numerical experiments are presented to verify the performance of the proposed scheme, and the conclusions are drawn in Section \ref{sec-5}.

\section{Governing equations}
\label{sec-2}
Before introducing the two models, some notations are given initially.
We assume the models comprise $N$ material components with $N\geq 3$, and
 denote the vector-type concentration of each system by $\phi=(\phi_1,\phi_2,\dots,\phi_N)$ where  $\phi_k=\phi_k({\bf x}, t)$ is the concentration of the $k$-th phase.
Here, ${\bf x} \in \Omega$ and $t \in [0,T]$ represent the spatial and temporal variables, respectively, with $\Omega$ being a smooth, open, and bounded domain in $\mathbb{R}^{d}$ ($d=2,3$), and $T$ the final time.
 We further assume the value of $\phi_k$
 ranges from $0$ to $1$, where $\phi_k=1$, $0<\phi_k<1$, $\phi_k=0$ indicate $\phi_k$ inside the $k$-th phase, at the interface and outside the $k$-th phase, respectively.
 In addition, some positive parameters are always involved, such as the mobility parameter $M$, the fluid viscosity $\nu$, and the hydraulic conductivity $\alpha$. Moreover, we denote the $L^2$-inner product of any two functions $g({\bf x})$ and $h({\bf x})$ by $(g({\bf x}),h({\bf x}))=\int_\Omega g({\bf x})h({\bf x})d{\bf x}$ and the $L^2$-norm of $g({\bf x})$ by $\left \|g({\bf x})\right \|^2=(g({\bf x}),g({\bf x}))$.

\subsection{N-component NS-CAC model}
\label{sec-2.1}
We define the total free energy functional of N-component NS-CAC model as
\begin{equation}
\label{eqn:Energy_NSCAC}
    {\mathcal{E}} ({\bf u},{\bf \phi})=\int_\Omega \frac{1}{2}\left |{\bf u} \right |^2 + \lambda\sum_{k=1}^{N}F(\phi_k)+\frac{\lambda}{2}\sum_{k=1}^{N}|\nabla \phi_k|^2d{\bf x},
\end{equation}
where $F(\phi_k)=\frac{1}{4\epsilon^2}\phi_k^2(1-\phi_k)^2$ is the double-well potential function describing the phase separation, $\epsilon$ stands for the interface thickness, $\lambda$ is a positive constant associated with the surface tension parameter, and ${\bf u}={\bf u}({\bf x}, t)$ represents the velocity field. Moreover, $\phi_k$ holds
\begin{equation}
\label{eqn:N_condition}
    \phi_1 +\phi_2 +\cdots +\phi_N =1.
\end{equation}
By performing the variational derivative of the total energy \eqref{eqn:Energy_NSCAC}, we get the N-component Navier-Stokes-Allen-Cahn model:
\begin{subequations}
\label{eqn:model1}
\begin{align}
  &\frac{\partial\phi_k}{\partial t}+\nabla \cdot ({\bf{u}}\phi_k)+M\mu_k=0, \label{eqn:model1_a}\\
  &\mu_k=\lambda\left(-\Delta \phi_k + f_k( \phi_k ) \right), \ \ k=1,2,\dots,N,\label{eqn:model1_b}\\
  &\frac{\partial \bf u}{\partial t}+{\bf{u}\cdot \nabla \bf u }-\nu\Delta {\bf u} + \nabla p+\sum_{k=1}^{N} \phi _k \nabla \mu_k=0,
  \label{eqn:model1_c}\\
  &\nabla \cdot {\bf u}=0,
  \label{eqn:model1_d}
\end{align}
\end{subequations}
where $\mu_k=\frac{\delta \mathcal{E}}{\delta \phi_k}$ denotes the chemical potential, $p=p({\bf x},t)$ is the pressure field, and $f_k(\phi_k)=\frac{\partial F(\phi_k)}{\partial \phi_k}=\frac{1}{\epsilon^2}\phi_k(\phi_k-\frac{1}{2})(\phi_k-1)$. The boundary conditions are either periodic or the following conditions:
\begin{equation}
\label{eqn:boundary condition}
\left. \nabla \phi _k \cdot {\bf{n}}\right |_{\partial \Omega } =\left. \nabla \mu _k \cdot {\bf{n}}\right |_{\partial \Omega } = \left. \nabla p \cdot {\bf{n}} \right |_{\partial \Omega } = 0,\ \
\left.
\bf{u} \right |_{\partial \Omega }={\bf 0},
\end{equation}
where $\bf{n}$ refers to the unit vector pointing outward from the boundary $\partial \Omega$.

In order to achieve the mass conservation,
the N-component NS-CAC model is obtained by adding a nonlocal Lagrange multiplier term $\frac{1}{|\Omega|}\int_\Omega f_k(\phi_k)d{\bf x}$, i.e.,
\begin{subequations}
\begin{align}
  &\frac{\partial\phi_k}{\partial t}+\nabla \cdot ({\bf{u}}\phi_k)+M\mu_k=0, \\
  &\mu_k=\lambda\left(-\Delta \phi_k + \bar{f}_k( \phi_k )\right), \ \ k=1,2,\dots,N,\\
  &\frac{\partial \bf u}{\partial t}+{\bf{u}\cdot \nabla \bf u }-\nu\Delta {\bf u} + \nabla p+\sum_{k=1}^{N} \phi _k \nabla \mu_k=0,\\
  &\nabla \cdot {\bf u}=0,
\end{align}
\end{subequations}
where $\bar{f}_k( \phi_k )=f_k( \phi_k )-\frac{1}{|\Omega|}\int_\Omega f_k(\phi_k)d{\bf x}$.
Additionally, another Lagrange multiplier term, $\beta =-\frac{1}{N}\sum_{k=1}^{N}\bar{f}_k( \phi_k )$, is introduced to ensure condition \eqref{eqn:N_condition}, thereby resulting in the final N-component NS-CAC model as follows
\begin{subequations}
\label{eqn:rmodel1}
\begin{align}
   &\frac{\partial\phi_k}{\partial t}+\nabla \cdot ({\bf{u}}\phi_k)+M\mu_k=0, \label{eqn:rmodel1_a}\\
   &\mu_k=\lambda\left(-\Delta \phi_k + \bar{f}_k( \phi_k )+\beta \right), \ \ k=1,2,\dots,N, \label{eqn:rmodel1_b}\\
   &\frac{\partial \bf u}{\partial t}+{\bf{u}\cdot \nabla \bf u }-\nu\Delta {\bf u} + \nabla p+\sum_{k=1}^{N} \phi _k \nabla \mu_k=0,
   \label{eqn:rmodel1_c}\\
   &\nabla \cdot {\bf u}=0.
   \label{eqn:rmodel1_d}
\end{align}
\end{subequations}
The boundary conditions are still either periodic or \eqref{eqn:boundary condition} and $\phi_k$ also satisfies the condition \eqref{eqn:N_condition}. By taking the $L^2$-inner products of \eqref{eqn:rmodel1_a} and \eqref{eqn:rmodel1_b} with $1$ and employing the integration by parts, the divergence theorem and the boundary conditions \eqref{eqn:boundary condition}, we readily derive mass conservation property for the system \eqref{eqn:rmodel1}:
\begin{equation}
\label{eqn:mass conservation}
\frac{d}{dt}\int_\Omega \phi_k d{\bf x}=0,
\end{equation}
where $(\bar{f}_k( \phi_k ),1)=0$ and $(\beta,1)=0$ are used because of the definitions of $\bar{f}_k( \phi_k )$ and $\beta$.\par
In addition, the system \eqref{eqn:rmodel1} follows a dissipation law of energy. Indeed, by computing the $L^2$-inner products of \eqref{eqn:rmodel1_a} and \eqref{eqn:rmodel1_b}  with $\mu_k$ and $ \frac{\partial \phi_k}{\partial t} $, respectively, we have
\begin{align}
\label{eqn:rmodel1_a1}
   &\left (\frac{\partial \phi_k}{\partial t},\mu _k\right )+\left(\nabla \cdot ({\bf u}\phi_k),\mu_k\right)+M\left \|\mu_k\right \|^2=0,\\
   \label{eqn:rmodel1_b1}
   &\left (\mu _k,\frac{\partial \phi_k}{\partial t}\right )=\lambda \left(-\Delta \phi_k,\frac{\partial \phi_k}{\partial t}\right )+\lambda\left (\bar{f}_k( \phi_k ),\frac{\partial \phi_k}{\partial t}\right)
   +\lambda\left (\beta,\frac{\partial \phi_k}{\partial t}\right).
\end{align}
Due to the mass conservation property \eqref{eqn:mass conservation}, we can derive that
\begin{align}
    \left(\bar{f}_k( \phi_k ),\frac{\partial \phi_k}{\partial t}\right)
    &=\int_{\Omega}\bar{f}_k( \phi_k )\frac{\partial \phi_k}{\partial t}d{\bf x}
    =\int_{\Omega}\left(f_k( \phi_k )-\frac{1}{|\Omega|}\int_\Omega f_k(\phi_k)d{\bf x}\right)\frac{\partial \phi_k}{\partial t}d{\bf x}
    \nonumber\\
    &=\int_{\Omega}f_k( \phi_k )\frac{\partial \phi_k}{\partial t}d{\bf x}
    -\frac{1}{|\Omega|}\int_\Omega f_k(\phi_k)d{\bf x}\int_{\Omega}\frac{\partial \phi_k}{\partial t}d{\bf x}
    \nonumber\\
    &=\int_{\Omega}f_k( \phi_k )\frac{\partial \phi_k}{\partial t}d{\bf x}.
    \label{eqn:rmodel1_b1_1}
\end{align}
Combining \eqref{eqn:rmodel1_a1}-\eqref{eqn:rmodel1_b1_1} for $k=1,2,\dots,N$ and using the integration by parts, we obtain
\begin{equation}
\label{eqn:rmodel1_a1b1}
    \frac{d}{dt}\int_\Omega\frac{\lambda}{2}\sum_{k=1}^{N}|\nabla \phi_k|^2+\lambda\sum_{k=1}^{N}F(\phi_k)d{\bf x}+\sum_{k=1}^{N}(\nabla \cdot ({\bf u}\phi_k),\mu_k)
    =-M\sum_{k=1}^{N}\left \|\mu_k\right \|^2,
\end{equation}
where $\sum_{k=1}^{N}\left (\beta,\frac{\partial \phi_k}{\partial t}\right)=\left (\beta,\sum_{k=1}^{N}\frac{\partial \phi_k}{\partial t}\right)=0$ holds with the condition \eqref{eqn:N_condition}.
By calculating the $L^2$-inner product of \eqref{eqn:rmodel1_c} with $\bf u$ and utilizing the properties $({\bf{u}\cdot \nabla \bf u },{\bf u})=0$ and $(\nabla p,{\bf u})=0$, we get
\begin{equation}
\label{eqn:rmodel1_c1}
    \frac{d}{dt}\int_\Omega \frac{1}{2}|{\bf u}|^2d{\bf x}-(\nu \Delta{\bf u},{\bf u}) +\sum_{k=1}^{N} (\phi _k \nabla \mu_k,{\bf u})=0.
\end{equation}
Through the application of integration by parts, the advection term and the surface tension term can be cancelled out, i.e.
\begin{equation}
\label{eqn:ad-sur}
    (\nabla \cdot ({\bf u}\phi_k),\mu_k)+(\phi _k \nabla \mu_k,{\bf u})=0.
\end{equation}
Adding \eqref{eqn:rmodel1_a1b1} to \eqref{eqn:rmodel1_c1} and using \eqref{eqn:ad-sur}, we arrive at the following energy dissipation property:
\begin{equation}
    \frac{d}{dt}{\mathcal{E}} ({\bf u,\phi})=-M\sum_{k=1}^{N}\left \|\mu_k\right \|^2-\left \|\sqrt{\nu}\nabla {\bf u}\right \|^2 \leq 0.
\end{equation}
\begin{remark}
    In the proof of the energy dissipation property provided above, it's evident that both the advection term and the surface tension term have no contribution on the total energy \eqref{eqn:Energy_NSCAC} since \eqref{eqn:ad-sur} consistently holds. This characteristic is referred to as the "zero-energy-contribution" feature.
\end{remark}

\begin{remark}
    When dealing with a system consisting of two material components, namely component 1 and component 2, we only need to use a phase field variable $\phi$ to describe it. Here, $\phi=1$ represents $\phi$ inside the component 1 while $\phi=0$ signifies $\phi$ outside component 1, that is, inside the component 2. Consequently, there is no need for the addition of the Lagrange multiplier term $\beta$. In this paper, we don't have to present theoretical analysis of two-component systems owing to the complete similarity of the theories. Nevertheless, we will provide some numerical experiments.
\end{remark}

\subsection{N-component D-CAC model}
\label{sec-2.2}
The total free energy functional of N-component D-CAC model is defined as
\begin{equation}
\label{eqn:Energy_DCAC}
    E ({\bf u,\phi})=\int_\Omega \frac{\tau}{2}\left |{\bf u} \right |^2 + \lambda\sum_{k=1}^{N}F(\phi_k)+\frac{\lambda}{2}\sum_{k=1}^{N}|\nabla \phi_k|^2d{\bf x}.
\end{equation}
By taking the variational derivative of the total energy \eqref{eqn:Energy_DCAC}, and adding the two Lagrange multiplier terms $\frac{1}{|\Omega|}\int_\Omega f_k(\phi_k)d{\bf x}$ and $\beta$,  we obtain the N-component D-CAC model:
\begin{subequations}
\label{eqn:rmodel2}
\begin{align}
   &\frac{\partial\phi_k}{\partial t}+\nabla \cdot ({\bf{u}}\phi_k)+M\mu_k=0, \label{eqn:rmodel2_a}\\
   &\mu_k=\lambda\left(-\Delta \phi_k + \bar{f}_k( \phi_k ) +\beta\right), \ \ k=1,2,\dots,N,  \label{eqn:rmodel2_b}\\
   &\tau\frac{\partial \bf u}{\partial t}+\alpha \nu{\bf u} + \nabla p+\sum_{k=1}^{N} \phi _k \nabla \mu_k=0,
   \label{eqn:rmodel2_c}\\
   &\nabla \cdot {\bf u}=0,
   \label{eqn:rmodel2_d}
\end{align}
\end{subequations}
where $\tau$ is a positive constant,
and $\phi_1,\phi_2,\dots,\phi_N$ are also restricted by the condition \eqref{eqn:N_condition}. Different from N-component NS-CAC model, ${\bf u}={\bf u}({\bf x}, t)$ represents the nondimensionalized seepage velocity in porous medium or a Hele–Shaw cell. The boundary conditions are either periodic or the following:
\begin{equation}
    \left. \nabla \phi _k \cdot {\bf{n}}\right |_{\partial \Omega } =\left. \nabla \mu _k \cdot {\bf{n}}\right |_{\partial \Omega } = \left. \nabla p \cdot {\bf{n}} \right |_{\partial \Omega } = 0,\ \
    \left.{\bf u} \right |_{\partial \Omega }={\bf 0}.
\end{equation}

Obviously, the system \eqref{eqn:rmodel2} satisfies the mass conservation \eqref{eqn:mass conservation}. Moreover, we have the following energy dissipation property:
\begin{equation}
    \frac{d}{dt}E({\bf u,\phi})=-M\sum_{k=1}^{N}\left \|\mu_k\right \|^2-\alpha\left \|\sqrt{\nu}{\bf u}\right \|^2 \leq 0.
\end{equation}

\section{Numerical schemes}
\label{sec-3}
In this section, we construct second-order weighted IMEX schemes based on the SAV and projection methods. Moreover, we prove the mass conservation of the schemes, and verify their energy dissipation properties through $G$-norm inspired by $G$-stability.
\par
\subsection{G-norm for the energy stability}
In this subsection, in order to obtain the energy stability properties of the numerical schemes, we recall the following $G$-norm by the $G$-stability concept \cite{wang2023unconditionally}.
When $\theta \in [1/2,1]$, for any ${\bf w,v}\in L^2(\Omega)$, we define the $G$-norm as
\begin{equation}
\left\|
   \begin{bmatrix}
      {\bf w}\\
      {\bf v}
   \end{bmatrix}
\right\|_{\bf G}^2:=
\left(
\begin{bmatrix}
    {\bf w}\\
      {\bf v}
 \end{bmatrix},G
 \begin{bmatrix}
    {\bf w}\\
      {\bf v}
 \end{bmatrix}
 \right),
\end{equation}
where
\begin{align}
  &G=\begin{bmatrix}
  \frac{\theta(2\theta+3)}{2}&-\frac{(\theta+1)(2\theta-1)}{2} \\
  -\frac{(\theta+1)(2\theta-1)}{2}&\frac{\theta(2\theta-1)}{2}
\end{bmatrix}.\label{G}
\end{align}
Obviously, when $\theta \in (1/2,1]$, $G$ is a symmetric positive matrix, and when $\theta=1/2$, the norm will degrade into the $L^2$-norm, that is,
\begin{equation}
    \left\|
    \begin{bmatrix}
      {\bf w}\\
      {\bf v}
    \end{bmatrix}
    \right\|_{\bf G}^2
    =\left\|\bf w\right\|^2.
\end{equation}
Furthermore, the above norm can be expressed in detail as
\begin{equation}
\left\|
   \begin{bmatrix}
      {\bf w}\\
      {\bf v}
   \end{bmatrix}
\right\|_{\bf G}^2
=\frac{\theta(2\theta+3)}{2}\left\|{\bf w}\right\|^2+\frac{\theta(2\theta-1)}{2}\left\|{\bf v}\right\|^2-(\theta+1)(2\theta-1)({\bf w},{\bf v}).
\end{equation}
\par
In addition, for any $w,v\in L^\infty([0,T])$, we define the following quadratic form:
\begin{equation}
\left |
   \begin{bmatrix}
      w\\
      v
   \end{bmatrix}
\right |_{\bf G}^2:=
\begin{bmatrix}
    w,v
 \end{bmatrix}
 G
 \begin{bmatrix}
    w\\
      v
 \end{bmatrix}.
\end{equation}
i.e.
\begin{align}
&\left |
   \begin{bmatrix}
      w\\
      v
   \end{bmatrix}
\right |_{\bf G}^2
=\frac{\theta(2\theta+3)}{2}w^2+\frac{\theta(2\theta-1)}{2}v^2-(\theta+1)(2\theta-1)wv.
\end{align}
\par
According to the definition of $G$-norm, we have the following lemma.
\begin{lemma}
\label{lemma_GF-norm}
    When $\theta \in [1/2,1]$, for any sequence of function $\{{\bf w}^n\}$, it holds
    \begin{align}
        &\left(\frac{2\theta+1}{2}{\bf w}^{n+1}-2\theta {\bf w}^n+\frac{2\theta-1}{2}{\bf w}^{n-1},\theta {\bf w}^{n+1}+(1-\theta){\bf w}^n\right)
        \nonumber\\
        =&\frac{1}{2}\left\|
        \begin{bmatrix}
            {\bf w}^{n+1}\\
            {\bf w}^n
        \end{bmatrix}
        \right\|_{\bf G}^2
        -\frac{1}{2}\left\|
        \begin{bmatrix}
            {\bf w}^n\\
            {\bf w}^{n-1}
        \end{bmatrix}
        \right\|_{\bf G}^2
        +\frac{\theta(2\theta-1)}{4}\left\|{\bf w}^{n+1}-2{\bf w}^n+{\bf w}^{n-1}\right\|^2.
    \end{align}
\end{lemma}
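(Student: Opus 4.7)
The plan is to prove the identity by bilinear expansion and coefficient matching. Both sides are symmetric bilinear (quadratic) in the triple $({\bf w}^{n+1},{\bf w}^n,{\bf w}^{n-1})$, so it suffices to compare the coefficients of the six quantities $\|{\bf w}^{n+1}\|^2,\|{\bf w}^n\|^2,\|{\bf w}^{n-1}\|^2,({\bf w}^{n+1},{\bf w}^n),({\bf w}^{n+1},{\bf w}^{n-1}),({\bf w}^n,{\bf w}^{n-1})$. Nothing about the proof uses the specific structure of ${\bf w}^n$; it is a pure algebraic identity in an inner-product space.

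First I would expand the LHS by distributing the $L^2$ inner product across both linear combinations. This produces six inner products with coefficients that are explicit polynomials in $\theta$: for instance, the $\|{\bf w}^{n+1}\|^2$ coefficient is $\theta(2\theta+1)/2$, the $({\bf w}^{n+1},{\bf w}^n)$ coefficient is $\frac{(2\theta+1)(1-\theta)}{2}-2\theta^2$, and the $({\bf w}^{n+1},{\bf w}^{n-1})$ coefficient is $\theta(2\theta-1)/2$. Collecting the remaining three in the same way gives an explicit $\theta$-polynomial for each of the six monomials.

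Next I would expand the RHS term by term. Using the explicit quadratic form derived from $G$, namely $\frac{\theta(2\theta+3)}{2}\|a\|^2+\frac{\theta(2\theta-1)}{2}\|b\|^2-(\theta+1)(2\theta-1)(a,b)$, the difference $\frac{1}{2}\|[{\bf w}^{n+1};{\bf w}^n]\|_G^2-\frac{1}{2}\|[{\bf w}^n;{\bf w}^{n-1}]\|_G^2$ telescopes naturally and contributes to the three diagonal terms and to the $({\bf w}^{n+1},{\bf w}^n)$ and $({\bf w}^n,{\bf w}^{n-1})$ cross terms, but contributes nothing to $({\bf w}^{n+1},{\bf w}^{n-1})$. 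The remaining correction $\frac{\theta(2\theta-1)}{4}\|{\bf w}^{n+1}-2{\bf w}^n+{\bf w}^{n-1}\|^2$ is expanded via the full polarization $\|{\bf w}^{n+1}\|^2+4\|{\bf w}^n\|^2+\|{\bf w}^{n-1}\|^2-4({\bf w}^{n+1},{\bf w}^n)+2({\bf w}^{n+1},{\bf w}^{n-1})-4({\bf w}^n,{\bf w}^{n-1})$; this is precisely what is needed to supply the missing $({\bf w}^{n+1},{\bf w}^{n-1})$ cross coefficient and to correct the other five coefficients so that they match the LHS exactly.

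There is no conceptual difficulty; the main obstacle is the careful $\theta$-polynomial bookkeeping for each of the six coefficients, which must be organized so that errors do not propagate. As sanity checks, one may verify the two endpoint cases: at $\theta=1/2$ the correction term vanishes (since $\theta(2\theta-1)=0$), the $G$-norm degenerates to $\|\cdot\|^2$ on the first slot, and the identity collapses to the Crank-Nicolson telescoping $({\bf w}^{n+1}-{\bf w}^n,\tfrac12{\bf w}^{n+1}+\tfrac12{\bf w}^n)=\tfrac12\|{\bf w}^{n+1}\|^2-\tfrac12\|{\bf w}^n\|^2$; at $\theta=1$ one recovers the standard BDF2 $G$-stability identity of Dahlquist type. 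Both limit checks confirm the matching of coefficients has been carried out correctly.
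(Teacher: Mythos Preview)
Your proposal is correct: the identity is purely algebraic, and verifying it by expanding both sides as quadratic forms in $({\bf w}^{n+1},{\bf w}^n,{\bf w}^{n-1})$ and matching the six coefficients is the natural route. The paper in fact states Lemma~\ref{lemma_GF-norm} without proof, treating it as an immediate consequence of the explicit $G$-norm formula, so your direct coefficient-matching (together with the $\theta=1/2$ and $\theta=1$ sanity checks) is exactly the verification the paper leaves implicit.
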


\begin{remark}
\label{remark_GF}
   Similar to Lemma \ref{lemma_GF-norm}, for any sequence of function $\{w^n\}$, we can get the following identity
    \begin{align}
        &\left(\frac{2\theta+1}{2}w^{n+1}-2\theta w^n+\frac{2\theta-1}{2}w^{n-1}\right)\left(\theta w^{n+1}+(1-\theta)w^n\right)
        \nonumber\\
        =&\frac{1}{2}\left|
        \begin{bmatrix}
            w^{n+1}\\
            w^n
        \end{bmatrix}
        \right|_{\bf G}^2
        -\frac{1}{2}\left|
        \begin{bmatrix}
            w^n\\
            w^{n-1}
        \end{bmatrix}
        \right|_{\bf G}^2
        +\frac{\theta(2\theta-1)}{4}\left|w^{n+1}-2w^n+w^{n-1}\right|^2.
    \end{align}
\end{remark}

\subsection{N-component NS-CAC model}
\label{sec-3.1}
We introduce two scalar auxiliary variables
\begin{equation}
    r(t)=\sqrt{\int_\Omega \sum_{k=1}^{N}F(\phi_k)d{\bf x}+C},\quad\quad q(t)\equiv 1,
\end{equation}
where $C$ is a positive constant to ensure $\int_{\Omega}\sum_{k=1}^{N}F(\phi_k)d{\bf x}+C>0$. Then, the system \eqref{eqn:rmodel1} can be reformulated as
\begin{subequations}
\label{eqn:rrmodel1}
\begin{align}
&\frac{\partial\phi_k}{\partial t}+q\nabla \cdot ({\bf{u}}\phi_k)+M\mu_k=0, \label{eqn:rrmodel1_a}\\
&\mu_k=\lambda\left(-\Delta \phi_k + (\bar{H}_k+\gamma) r\right), \label{eqn:rrmodel1_b}\\
&\frac{dr}{dt}=\frac{1}{2}\sum_{k=1}^{N}\int_\Omega {\bar H}_k\frac{\partial \phi_k}{\partial t}d{\bf x},
\label{eqn:rrmodel1_c}\\
&\frac{\partial \bf u}{\partial t}+q{\bf{u}\cdot \nabla \bf u }-\nu\Delta {\bf u} + \nabla p+q\sum_{k=1}^{N} \phi _k \nabla \mu_k=0,
\label{eqn:rrmodel1_d}\\
&\nabla \cdot {\bf u}=0,
\label{eqn:rrmodel1_e}\\
&\frac{dq}{dt}=\sum_{k=1}^{N}\int_\Omega \nabla \cdot ({\bf u}\phi_k)\mu_kd{\bf x}+\sum_{k=1}^{N}\int_\Omega\phi _k \nabla \mu_k\cdot {\bf u}d{\bf x}+\int_\Omega {\bf{u}\cdot \nabla \bf u }\cdot {\bf u}d{\bf x},
\label{eqn:rrmodel1_f}
\end{align}
\end{subequations}
where
\begin{equation}
    H_k=\frac{f_k(\phi _k)}{\sqrt{\int_\Omega \sum_{k=1}^{N}F(\phi_k)d{\bf x}+C}},\ \
    \bar{H}_k=H_k-\frac{1}{|\Omega|}\int_\Omega H_kd{\bf x},\ \
    \gamma=-\frac{1}{N}\sum_{k=1}^{N}\bar{H}_k.
    \label{eqn:rrmodel1_g}
\end{equation}
To derive \eqref{eqn:rrmodel1_f}, we actually use
\begin{equation}
    \left(\nabla \cdot ({\bf u}\phi_k),\mu_k \right)
    +\left(\phi _k \nabla \mu_k,{\bf u} \right)=0,\quad
    \left({\bf{u}\cdot \nabla \bf u},{\bf u} \right)=0,
\end{equation}
which could be obtained by applying the divergence theorem, the integration by parts, \eqref{eqn:rrmodel1_e} and the boundary conditions \eqref{eqn:boundary condition}. In what follows, we prove that the system \eqref{eqn:rrmodel1} satisfies the conservation
 of mass and stability of energy.
\begin{theorem}\label{rrmodel1_mass}
The solutions of the system \eqref{eqn:rrmodel1} satisfy the mass conservation of each phase.
\end{theorem}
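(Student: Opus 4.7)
The plan is to integrate the evolution equation \eqref{eqn:rrmodel1_a} over the spatial domain $\Omega$ and show that both the advection term and the chemical-potential term contribute zero, leaving $\frac{d}{dt}\int_\Omega \phi_k\,d{\bf x}=0$ for each $k$.

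First I would apply $\int_\Omega \cdot\, d{\bf x}$ to \eqref{eqn:rrmodel1_a}, which yields
\begin{equation*}
\frac{d}{dt}\int_\Omega \phi_k\,d{\bf x} + q\int_\Omega \nabla\cdot({\bf u}\phi_k)\,d{\bf x} + M\int_\Omega \mu_k\,d{\bf x} = 0.
\end{equation*}
The second term vanishes immediately: by the divergence theorem,
$\int_\Omega \nabla\cdot({\bf u}\phi_k)\,d{\bf x} = \int_{\partial\Omega}\phi_k{\bf u}\cdot{\bf n}\,ds$,
which is zero under either the no-slip condition ${\bf u}|_{\partial\Omega}={\bf 0}$ or the periodic boundary condition.

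Next I would handle the $\mu_k$ integral using the definition \eqref{eqn:rrmodel1_b}:
\begin{equation*}
\int_\Omega \mu_k\,d{\bf x} = \lambda\left(-\int_\Omega \Delta\phi_k\,d{\bf x} + r\int_\Omega(\bar{H}_k+\gamma)\,d{\bf x}\right).
\end{equation*}
The Laplacian integral reduces to $\int_{\partial\Omega}\nabla\phi_k\cdot{\bf n}\,ds = 0$ by the boundary condition \eqref{eqn:boundary condition} (or periodicity). The remaining piece uses the carefully chosen zero-mean structure of the Lagrange multipliers: by definition \eqref{eqn:rrmodel1_g}, $\int_\Omega \bar{H}_k\,d{\bf x} = \int_\Omega H_k\,d{\bf x} - |\Omega|\cdot\frac{1}{|\Omega|}\int_\Omega H_k\,d{\bf x} = 0$, and consequently $\int_\Omega \gamma\,d{\bf x} = -\frac{1}{N}\sum_{j=1}^N \int_\Omega \bar{H}_j\,d{\bf x} = 0$. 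Therefore $\int_\Omega\mu_k\,d{\bf x}=0$, and combining everything gives the desired conservation $\frac{d}{dt}\int_\Omega\phi_k\,d{\bf x}=0$.

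There is no real obstacle here; this is a direct calculation. The only conceptual point worth emphasizing is that the proof pivots on the deliberate design of $\bar{H}_k$ and $\gamma$ as mean-zero quantities, which is precisely the reason these particular Lagrange multipliers were introduced in the SAV reformulation. The argument is independent of the auxiliary variables $r(t)$ and $q(t)$ appearing in front of the terms, so the same conclusion holds in the SAV-reformulated system as in the original model.
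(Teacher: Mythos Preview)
Your proposal is correct and follows essentially the same approach as the paper: take the $L^2$-inner product of \eqref{eqn:rrmodel1_a} and \eqref{eqn:rrmodel1_b} with $1$, use the divergence theorem and boundary conditions to kill the advection and Laplacian terms, and invoke the mean-zero definitions of $\bar{H}_k$ and $\gamma$ from \eqref{eqn:rrmodel1_g} to conclude $(\mu_k,1)=0$. The paper's proof is identical in substance and structure.
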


\begin{proof}
  By taking the $L^2$-inner products of \eqref{eqn:rrmodel1_a} and \eqref{eqn:rrmodel1_b} with 1, we get
  \begin{align}
      &\left(\frac{\partial\phi_k}{\partial t},1\right )+\left(q\nabla \cdot ({\bf{u}}\phi_k),1\right)+M(\mu_k,1)=0, \label{eqn:rrmodel1_a1_1}\\
      &(\mu_k,1)=\lambda(-\Delta \phi_k,1) +\lambda( \bar{H}_kr ,1)
      +\lambda( \gamma r,1). \label{eqn:rrmodel1_b1_1}
  \end{align}
  From \eqref{eqn:rrmodel1_g}, we have
  \small{
  \begin{align}\label{eqn:addxxx1}
  (\bar{H}_k,1)=\int_\Omega \left(H_k-\frac{1}{|\Omega|}\int_\Omega H_kd{\bf x}\right)d{\bf x}=0,~~~(\gamma,1)
  =(-\frac{1}{N}\sum_{k=1}^{N}\bar{H}_k,1)
  =-\frac{1}{N}\sum_{k=1}^{N}(\bar{H}_k,1)=0.
  \end{align}
  }
  Obviously, it follows that $(q\nabla \cdot ({\bf{u}}\phi_k),1)=0$ and $(-\Delta \phi_k,1)=0$. Hence, substituting \eqref{eqn:rrmodel1_b1_1} into  \eqref{eqn:rrmodel1_a1_1} and using \eqref{eqn:addxxx1}, we obtain
\begin{align}
\frac{d}{dt} \int_\Omega \phi_k d{\bf x}=\int_\Omega\frac{\partial\phi_k}{\partial t}d{\bf x}=0.
\end{align}
We have completed the proof.
\end{proof}

\begin{theorem}
\label{rrmodel1_energy}
    The system \eqref{eqn:rrmodel1} satisfies  the energy dissipation law.
\end{theorem}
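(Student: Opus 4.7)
The plan is to mimic the continuous energy argument already carried out for the original model \eqref{eqn:rmodel1}, but tracking the new scalar auxiliary variable $r$ in place of the nonlinear potential $F$. The candidate modified energy is
\begin{equation*}
   \widetilde{\mathcal{E}}({\bf u},{\bf \phi},r)
   =\int_\Omega \tfrac{1}{2}|{\bf u}|^2\,d{\bf x}
   +\tfrac{\lambda}{2}\sum_{k=1}^{N}\|\nabla\phi_k\|^2
   +\lambda r^2,
\end{equation*}
which coincides with $\mathcal{E}({\bf u},{\bf \phi})$ up to the additive constant $\lambda C$, thanks to the definition of $r$.

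First I would take the $L^2$-inner product of \eqref{eqn:rrmodel1_a} with $\mu_k$ and of \eqref{eqn:rrmodel1_b} with $\partial_t\phi_k$, then sum over $k$. Equating the two expressions for $\sum_k(\mu_k,\partial_t\phi_k)$ gives a relation involving $\lambda\frac{d}{dt}\frac{1}{2}\sum_k\|\nabla\phi_k\|^2$, the term $\lambda r\sum_k((\bar H_k+\gamma),\partial_t\phi_k)$, the dissipation $M\sum_k\|\mu_k\|^2$, and the advection coupling $q\sum_k(\nabla\cdot({\bf u}\phi_k),\mu_k)$. The $\gamma$ contribution vanishes because $\sum_k\partial_t\phi_k=0$ by condition \eqref{eqn:N_condition}, reducing the awkward term to $\lambda r\sum_k(\bar H_k,\partial_t\phi_k)$. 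Multiplying \eqref{eqn:rrmodel1_c} by $2\lambda r$ then converts this precisely into $\lambda\frac{d}{dt}r^2$, which is the SAV chain-rule trick at the heart of the scheme.

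Next I would take the $L^2$-inner product of the momentum equation \eqref{eqn:rrmodel1_d} with ${\bf u}$. The pressure term vanishes against $\nabla\cdot{\bf u}=0$ from \eqref{eqn:rrmodel1_e} together with the boundary data, the convective term $({\bf u}\cdot\nabla{\bf u},{\bf u})$ vanishes by the same incompressibility, and the viscous term yields $\nu\|\nabla{\bf u}\|^2$. What remains is $\frac{d}{dt}\frac{1}{2}\|{\bf u}\|^2$, the viscous dissipation, and the surface-tension coupling $q\sum_k(\phi_k\nabla\mu_k,{\bf u})$. Adding this identity to the one obtained in the previous step, the two coupling terms combine into $q\sum_k[(\nabla\cdot({\bf u}\phi_k),\mu_k)+(\phi_k\nabla\mu_k,{\bf u})]$, which is exactly zero by the zero-energy-contribution identity \eqref{eqn:ad-sur}. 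This yields
\begin{equation*}
   \frac{d}{dt}\widetilde{\mathcal{E}}({\bf u},{\bf \phi},r)
   =-M\sum_{k=1}^{N}\|\mu_k\|^2-\nu\|\nabla{\bf u}\|^2\le 0.
\end{equation*}

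The main obstacle, and the only nontrivial bookkeeping, is the handling of the extra Lagrange-type term $\lambda\gamma r$ inside $\mu_k$: one must verify that it truly disappears after summation in $k$, which relies crucially on the pointwise constraint \eqref{eqn:N_condition}. Apart from this, the argument is a direct continuous-level SAV computation and does not require \eqref{eqn:rrmodel1_f}, since at the PDE level $q\equiv 1$ makes both sides of that identity vanish automatically; equation \eqref{eqn:rrmodel1_f} is recorded here only to enable a parallel argument at the discrete level.
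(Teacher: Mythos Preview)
Your proposal is correct and follows essentially the same route as the paper: test \eqref{eqn:rrmodel1_a} with $\mu_k$, \eqref{eqn:rrmodel1_b} with $\partial_t\phi_k$, multiply \eqref{eqn:rrmodel1_c} by $2\lambda r$, and test \eqref{eqn:rrmodel1_d} with ${\bf u}$, then use the zero-energy-contribution cancellation and the constraint \eqref{eqn:N_condition} to kill the $\gamma$ term.

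The one genuine difference is the treatment of $q$. The paper also multiplies \eqref{eqn:rrmodel1_f} by $q$ and keeps $\tfrac{1}{2}|q|^2$ as part of the dissipating energy, so that the advection, surface-tension, and convective contributions are cancelled \emph{against} the $q$-equation rather than shown to vanish individually. Your version instead observes that $({\bf u}\cdot\nabla{\bf u},{\bf u})=0$ and $(\nabla\cdot({\bf u}\phi_k),\mu_k)+(\phi_k\nabla\mu_k,{\bf u})=0$ hold outright, so multiplying by $q$ still gives zero and \eqref{eqn:rrmodel1_f} is superfluous at the PDE level. Both are valid; your argument is slightly leaner here, while the paper's version is deliberately set up to parallel the discrete proof (Theorem~\ref{rrrmodel1_energy}), where $q^{n+\theta}$ is no longer exactly one and the $q$-equation is genuinely needed to close the estimate.
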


\begin{proof}
    By taking the $L^2$-inner products of \eqref{eqn:rrmodel1_a} with $\mu_k$ and \eqref{eqn:rrmodel1_b} with $ \frac{\partial \phi_k}{\partial t} $, we have
\begin{align}
    &\left (\frac{\partial \phi_k}{\partial t},\mu _k\right )+(q\nabla \cdot ({\bf u}\phi_k),\mu_k)+M\left \|\mu_k\right \|^2=0,\label{eqn:rrmodel1_a1}\\
    &\left (\mu _k,\frac{\partial \phi_k}{\partial t}\right )=\lambda \left(-\Delta \phi_k,\frac{\partial \phi_k}{\partial t}\right )+\lambda\left (\bar{H}_kr ,\frac{\partial \phi_k}{\partial t}\right)
    +\lambda\left (\gamma r ,\frac{\partial \phi_k}{\partial t}\right).\label{eqn:rrmodel1_b1}
\end{align}
Combining \eqref{eqn:rrmodel1_b1} with \eqref{eqn:rrmodel1_a1} for $k=1,2,\dots,N$ and using the integration by parts, we obtain
\begin{equation}
\label{eqn:rrmodel1_a1b1}
    \frac{d}{dt}
    \left(\frac{\lambda}{2}\sum_{k=1}^{N}\left \|\nabla \phi_k\right \|^2\right)
    +\lambda\sum_{k=1}^{N}\left (\bar{H}_kr ,\frac{\partial \phi_k}{\partial t}\right)+q\sum_{k=1}^{N}(\nabla \cdot ({\bf u}\phi_k),\mu_k)=-M\sum_{k=1}^{N}\left \|\mu_k\right \|^2,
\end{equation}
where $\sum_{k=1}^{N}\left (\gamma ,\frac{\partial \phi_k}{\partial t}\right)=\left (\gamma,\sum_{k=1}^{N}\frac{\partial \phi_k}{\partial t}\right)=0$.
By multiplying \eqref{eqn:rrmodel1_c} with $2\lambda r $, we get
\begin{equation}
\label{eqn:rrmodel1_c1}
    \lambda\frac{d}{dt}(|r|^2)=\lambda\sum_{k=1}^{N}\int_\Omega {\bar H}_kr\frac{\partial \phi_k}{\partial t}d{\bf x}.
\end{equation}
By taking the $L^2$-inner product of \eqref{eqn:rrmodel1_d} with $\bf u$ and using \eqref{eqn:rrmodel1_e}, we obtain
\begin{equation}
\label{eqn:rrmodel1_d1}
    \frac{d}{dt} \left(\frac{1}{2}\left\|{\bf u}\right \|^2\right)+q({\bf u\cdot \nabla u},{\bf u})+\left\|\sqrt{\nu} \nabla{\bf u}\right \|^2+(\nabla p,{\bf u})+q\sum_{k=1}^{N} (\phi _k \nabla \mu_k,{\bf u})=0.
\end{equation}
Multiplying \eqref{eqn:rrmodel1_f} with $ q $ obtains
\begin{equation}
\label{eqn:rrmodel1_f1}
    \frac{d}{dt}\left (\frac{|q|^2}{2}\right )=q\sum_{k=1}^{N}\int_\Omega \nabla \cdot ({\bf u}\phi_k)\mu_kd{\bf x}+q\sum_{k=1}^{N}\int_\Omega\phi _k \nabla \mu_k\cdot {\bf u}d{\bf x}+q\int_\Omega {\bf{u}\cdot \nabla \bf u }\cdot {\bf u}d{\bf x}.
\end{equation}
Combining \eqref{eqn:rrmodel1_a1b1}-\eqref{eqn:rrmodel1_f1} and using \eqref{eqn:rrmodel1_e} for the pressure term, we obtain the energy dissipative law as
\begin{align}
    \frac{d}{dt}{\mathcal{E}}({\bf u,\phi})&=\frac{d}{dt}\left(\frac{\lambda}{2}\sum_{k=1}^{N}\left \|\nabla \phi_k\right \|^2\right)+\frac{d}{dt}\left(\frac{1}{2}\left\|{\bf u}\right\|^2\right)+ \frac{d}{dt}\left(\frac{|q|^2}{2}\right)+\lambda\frac{d}{dt}(|r|^2)
    \nonumber\\
    &=-M\sum_{k=1}^{N}\left \|\mu_k\right \|^2-\left \|\sqrt{\nu}\nabla {\bf u}\right \|^2 \leq 0.
\end{align}
Therefore, we complete the proof.
\end{proof}
\par
Let $\Delta t=T/{N_t}$ be a time step size, where $N_t$ is the
total number of time steps. Set $t^n=n\Delta t$ for $0\leq n \leq {N_t}$. We denote $(\cdot)^n$ be the numerical approximation at $n$th time level. Then we propose a temporally second-order weighted IMEX scheme.\par
Assuming \small{$({\bf \tilde u},{\bf u},\phi_k,r,q,\mu,p)^{n}$} and \small{$({\bf \tilde u},{\bf u},\phi_k,r,q,\mu,p)^{n-1}$}  are given, we update $({\bf \tilde u},{\bf u},\phi_k,r,q,\mu,p)^{n+1}$  by
\begin{subequations}
 \label{eqn:rrrmodel1}
 \begin{align}
    &\frac{\frac{2\theta+1}{2}\phi_k^{n+1}-2\theta\phi_k^n+\frac{2\theta-1}{2}\phi_k^{n-1}}{\Delta t}+q^{n+\theta}\nabla\cdot({\bf u}^{*}\phi_k^{*})+M\mu_k^{n+\theta}=0,\label{eqn:rrrmodel1_a}\\
    &\mu_k^{n+\theta}
    =\lambda\left(-\Delta\phi_k^{n+\theta}
    +(\bar{H}_k^{*}+\gamma^{*})
    r^{n+\theta}\right),
    \label{eqn:rrrmodel1_b}\\
    &\frac{\frac{2\theta+1}{2}r^{n+1}-2\theta r^n+\frac{2\theta-1}{2}r^{n-1}}{\Delta t}=\frac{1}{2}\sum_{k=1}^{N}\int_\Omega \bar{H}_k^{*}\frac{\frac{2\theta+1}{2}\phi_k^{n+1}-2\theta\phi_k^n+\frac{2\theta-1}{2}\phi_k^{n-1}}{\Delta t}d{\bf x},
    \label{eqn:rrrmodel1_c}\\
    &\frac{\frac{2\theta+1}{2}{\bf \tilde u}^{n+1}-2\theta {\bf u}^n+\frac{2\theta-1}{2}{\bf u}^{n-1}}{\Delta t}+q^{n+\theta}{\bf u}^{*}\cdot \nabla{\bf u}^{*}-\nu\Delta {\bf \tilde u}^{n+\theta}+\nabla p^n+q^{n+\theta}\sum_{k=1}^{N}\phi_k^{*}\nabla \mu _k^{*}=0,\label{eqn:rrrmodel1_d}\\
    &\frac{\frac{2\theta+1}{2}{\bf u}^{n+1}-\frac{2\theta+1}{2}{\bf \tilde u}^{n+1}}{\Delta t}+(\nabla p^{n+\theta}-\nabla p^{n})=0,\label{eqn:rrrmodel1_e}\\
    &\nabla \cdot {\bf u}^{n+1}=0,
    \label{eqn:rrrmodel1_f}\\
    &\frac{\frac{2\theta+1}{2}q^{n+1}-2\theta q^n+\frac{2\theta-1}{2}q^{n-1}}{\Delta t}
    =\sum_{k=1}^{N}\int_\Omega \nabla \cdot ({\bf u}^{*}\phi_k^{*})\mu_k^{n+\theta}d{\bf x}
    \nonumber\\
    & \qquad \qquad \qquad \qquad  \qquad ~~~+\sum_{k=1}^{N}\int_\Omega\phi _k^{*} \nabla \mu_k^{*}\cdot {\bf \tilde u}^{n+\theta}d{\bf x}
    +\int_\Omega {\bf u}^{*}\cdot \nabla {\bf u }^{*}\cdot {\bf \tilde u}^{n+\theta}d{\bf x},
    \label{eqn:rrrmodel1_g}
 \end{align}
\end{subequations}
where
\begin{align}
  &(\cdot)^{n+\theta}=\theta (\cdot)^{n+1}+(1-\theta) (\cdot)^{n},
  \label{eqn:rrrmodel1_h}\\
  &(\cdot)^{*}=(1+\theta) (\cdot)^{n}-\theta (\cdot)^{n-1},
  \label{eqn:rrrmodel1_i}\\
  &{\bf \tilde u}^{n+\theta}=\theta {\bf \tilde u}^{n+1}+(1-\theta) {\bf u}^{n},
  \label{eqn:rrrmodel1_j}\\
  &H_k^{*}=\frac{f_k(\phi_k^{*})}{\sqrt{\int_\Omega \sum_{k=1}^{N}F(\phi_k^{*})d{\bf x}+C}},
  \label{eqn:rrrmodel1_k}\\
  &\bar{H}_k^{*}=H_k^{*}-\frac{1}{|\Omega|} \int_\Omega H_k^{*}d{\bf x},
  \label{eqn:rrrmodel1_l}\\
  &\gamma^{*}=-\frac{1}{N}\sum_{k=1}^{N}\bar{H}_k^{*}.
  \label{eqn:rrrmodel1_m}
\end{align}
The boundary conditions are periodic or as follows,
\begin{align}
    {\bf u}^{n+1}\cdot {\bf n}=0,\quad {\bf \tilde u}^{n+1}={\bf 0},\quad \nabla\phi_k^{n+1}\cdot{\bf n}=\nabla\mu_k^{n+1}\cdot{\bf n}=\nabla p^{n+1}\cdot{\bf n}=0,\quad \text{on}~~\partial \Omega.
\end{align}
Here, ${\bf \tilde u}^{n+1}$ is an intermediate field that does not normally satisfy the divergence-free condition. We give a detailed implementation of the scheme in Appendix \ref{A.1}.
\begin{remark}
\label{Remark initial steps}
    In order to compute the time discrete system \eqref{eqn:rrrmodel1}, we need the values of all variables at $t=t^1$. To this end, for the given initial condition ${\bf \tilde u}^0$, ${\bf u}^0$, $\phi_k^0$, $r^0$, $q^0$, $\mu_k^0$, $p^0$, we construct the following first-order scheme
    \begin{subequations}
       \begin{align}
          &\frac{\phi_k^1-\phi_k^0}{\Delta t}+q^{\theta}\nabla\cdot({\bf u}^{0}\phi_k^{0})+M\mu_k^{\theta}=0,
          \label{eqn:rrrrmodel1_a}\\
          &\mu_k^{\theta}=\lambda\left(-\Delta\phi_k^{\theta}+(\bar{H}_k^{0}+\gamma^{0})r^{\theta}\right),
          \label{eqn:rrrrmodel1_b}\\
          &\frac{r^{1}-r^0}{\Delta t}=\frac{1}{2}\sum_{k=1}^{N}\int_\Omega \bar{H}_k^{0}\frac{\phi_k^{1}-\phi_k^0}{\Delta t}d{\bf x},
          \label{eqn:rrrrmodel1_c}\\
          &\frac{{\bf \tilde u}^{1}-{\bf u}^0}{\Delta t}+q^{\theta}{\bf u}^{0}\cdot \nabla{\bf u}^{0}-\nu\Delta {\bf \tilde u}^{\theta}+\nabla p^0+q^{\theta}\sum_{k=1}^{N}\phi_k^{0}\nabla \mu _k^{0}=0,
          \label{eqn:rrrrmodel1_d}\\
          &\frac{{\bf u}^{1}-{\bf \tilde u}^{1}}{\Delta t}+(\nabla p^{\theta}-\nabla p^{0})=0,
          \label{eqn:rrrrmodel1_e}\\
          &\nabla \cdot {\bf u}^{1}=0,
          \label{eqn:rrrrmodel1_f}\\
          &\frac{q^{1}-q^0}{\Delta t}=\sum_{k=1}^{N}\int_\Omega \nabla \cdot ({\bf u}^{0}\phi_k^{0})\mu_k^{\theta}d{\bf x}+\sum_{k=1}^{N}\int_\Omega\phi _k^{0} \nabla \mu_k^{0}\cdot {\bf \tilde u}^{\theta}d{\bf x}+\int_\Omega {\bf u}^{0}\cdot \nabla {\bf u }^{0}\cdot {\bf \tilde u}^{\theta}d{\bf x},
          \label{eqn:rrrrmodel1_g}
        \end{align}
    \end{subequations}
    where
    $H_k^0=H_k(\phi_k^0)$, $\bar{H}_k^0=H_k^0-\frac{1}{|\Omega|} \int_\Omega H_k^0d{\bf x}$, $\gamma^0=-\frac{1}{N}\sum_{k=1}^{N}\bar{H}_k^0$.\par
    Taking the $L^2$-inner products of \eqref{eqn:rrrrmodel1_a} and \eqref{eqn:rrrrmodel1_b} with $1$, respectively, we have
    \begin{align}
          &\left(\frac{\phi_k^1-\phi_k^0}{\Delta t}, 1\right)
          +\left(q^{\theta}\nabla\cdot({\bf u}^{0}\phi_k^{0}), 1\right)
          +M(\mu_k^{\theta}, 1)=0,\\
          &(\mu_k^{\theta}, 1)
          =\lambda(-\Delta\phi_k^{\theta}, 1)
          +\lambda \left(\bar{H}_k^{0}r^{\theta}, 1\right)
          +\lambda \left(\gamma^{0}r^{\theta}, 1\right).
    \end{align}
    Applying the integration by parts and the definitions of $\bar{H}_k^0$ and $\gamma^0$ gets
    \begin{equation}
        (\bar{H}_k^0,1)=0,\ \ (\gamma^0,1)=0,\ \ \left(\nabla\cdot({\bf u}^0\phi_k^0),1\right)=0, \ \ (-\Delta\phi_k^{\theta},1)=0.
    \end{equation}
    Therefore, we obtain $\int_{\Omega}\phi_k^1d{\bf x}=\int_{\Omega}\phi_k^0d{\bf x}$.


\end{remark}
\begin{remark}
\label{remark_Poisson}
    During solving $p^{n+1}$, we need solve a Poisson equation which is obtained by applying the divergence operator to \eqref{eqn:rrrmodel1_e} and using \eqref{eqn:rrrmodel1_f}, i.e.,
    \begin{equation}
    \label{eqn:Poisson}
        \Delta p^{n+1}=\Delta p^n+\frac{2\theta+1}{2\theta\Delta t}\nabla \cdot {\bf \tilde u}^{n+1}.
    \end{equation}
    However, in fact, when the boundary condition is the homogeneous Neumann boundary condition or periodic boundary condition, Poisson equation does not have a unique solution. Instead, its solution is unique up to a constant. Generally, we use the following two methods \cite{choi2015numerical} to gain the unique solution, i.e.,
    \begin{itemize}
        \item imposing the Dirichlet condition at a single point;
        \item forcing its summation to be zero.
    \end{itemize}
    After $p^{n+1}$ is uniquely solved, we can naturally update ${\bf u}^{n+1}$ from \eqref{eqn:rrrmodel1_e}, i.e.,
    \begin{equation}
        {\bf u}^{n+1}=-\frac{2\theta\Delta t}{2\theta+1}(\nabla p^{n+1}-\nabla p^{n})+{\bf \tilde u}^{n+1}.
    \end{equation}
\end{remark}
Next, we give two theorems to illustrate the mass conservation and energy stability results of the scheme \eqref{eqn:rrrmodel1_a}-\eqref{eqn:rrrmodel1_g}.

\begin{theorem}
\label{rrrmodel1_mass}The discrete scheme \eqref{eqn:rrrmodel1_a}-\eqref{eqn:rrrmodel1_g} conserves the mass of the each phase.
\end{theorem}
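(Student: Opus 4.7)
The plan is to mimic, at the discrete level, the argument used for Theorem \ref{rrmodel1_mass}. I would take the $L^2$-inner product of \eqref{eqn:rrrmodel1_a} with the constant function $1$ and show that every spatial operator on the left-hand side, apart from the discrete time derivative, integrates to zero. Concretely, the convection term $(q^{n+\theta}\nabla\cdot({\bf u}^{*}\phi_k^{*}),1)$ vanishes by the divergence theorem together with either periodicity or the no-penetration condition, since ${\bf u}^{*}=(1+\theta){\bf u}^{n}-\theta {\bf u}^{n-1}$ inherits ${\bf u}^{*}\cdot{\bf n}=0$ from the boundary conditions ${\bf u}^{n}\cdot{\bf n}={\bf u}^{n-1}\cdot{\bf n}=0$.

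Next I would pair \eqref{eqn:rrrmodel1_b} with $1$ to handle $(\mu_k^{n+\theta},1)$. The Laplacian term gives $(-\Delta\phi_k^{n+\theta},1)=0$ by the homogeneous Neumann (or periodic) boundary condition on $\phi_k^{n+1}$ and $\phi_k^{n}$. For the two remaining pieces, the definitions \eqref{eqn:rrrmodel1_l} and \eqref{eqn:rrrmodel1_m} immediately give
\begin{equation*}
(\bar H_k^{*},1)=\int_\Omega H_k^{*}\,d{\bf x}-\frac{1}{|\Omega|}\int_\Omega H_k^{*}\,d{\bf x}\cdot|\Omega|=0,\qquad (\gamma^{*},1)=-\frac{1}{N}\sum_{k=1}^{N}(\bar H_k^{*},1)=0,
\end{equation*}
so that $(\mu_k^{n+\theta},1)=0$. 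Substituting this back into the $1$-inner product of \eqref{eqn:rrrmodel1_a}, I obtain the scalar recurrence
\begin{equation*}
\frac{2\theta+1}{2}\int_\Omega\phi_k^{n+1}\,d{\bf x}-2\theta\int_\Omega\phi_k^{n}\,d{\bf x}+\frac{2\theta-1}{2}\int_\Omega\phi_k^{n-1}\,d{\bf x}=0.
\end{equation*}

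Finally I would close the argument by induction on $n$. The base case is supplied by Remark \ref{Remark initial steps}, which shows $\int_\Omega \phi_k^{1}\,d{\bf x}=\int_\Omega \phi_k^{0}\,d{\bf x}$ via exactly the same cancellations applied to the starting first-order scheme. Assuming $\int_\Omega\phi_k^{n}\,d{\bf x}=\int_\Omega\phi_k^{n-1}\,d{\bf x}=\int_\Omega\phi_k^{0}\,d{\bf x}$, the recurrence above gives $\frac{2\theta+1}{2}\int_\Omega\phi_k^{n+1}\,d{\bf x}=\bigl(2\theta-\frac{2\theta-1}{2}\bigr)\int_\Omega\phi_k^{0}\,d{\bf x}=\frac{2\theta+1}{2}\int_\Omega\phi_k^{0}\,d{\bf x}$, hence $\int_\Omega\phi_k^{n+1}\,d{\bf x}=\int_\Omega\phi_k^{0}\,d{\bf x}$, which is the claimed mass conservation. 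The only potential subtlety is the boundary-term cancellation for the convection contribution with the intermediate and projected velocities; one must be careful that it is ${\bf u}^{*}$ (built from ${\bf u}^{n},{\bf u}^{n-1}$ that already satisfy ${\bf u}\cdot{\bf n}=0$ and $\nabla\cdot{\bf u}=0$), and not ${\bf \tilde u}^{n+1}$, that appears inside the phase equation \eqref{eqn:rrrmodel1_a}, so the argument does not require any divergence-free property of the intermediate field.
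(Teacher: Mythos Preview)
Your proposal is correct and follows essentially the same route as the paper's own proof: pair \eqref{eqn:rrrmodel1_a} and \eqref{eqn:rrrmodel1_b} with $1$, use the boundary conditions together with the definitions of $\bar H_k^{*}$ and $\gamma^{*}$ to kill the convection, Laplacian, and multiplier contributions, obtain the three-term recurrence for $\int_\Omega\phi_k^{n}\,d{\bf x}$, and close by induction with the base case from Remark~\ref{Remark initial steps}. Your extra remark that it is ${\bf u}^{*}$ rather than ${\bf \tilde u}^{n+1}$ appearing in \eqref{eqn:rrrmodel1_a} is a helpful clarification but not an additional ingredient beyond what the paper uses.
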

\begin{proof}
    Taking the $L^2$-inner product of \eqref{eqn:rrrmodel1_a} and \eqref{eqn:rrrmodel1_b} with 1, respectively, we have
    \begin{align}
        &\left(\frac{\frac{2\theta+1}{2}\phi_k^{n+1}-2\theta\phi_k^n+\frac{2\theta-1}{2}\phi_k^{n-1}}{\Delta t},1\right)
        +\left(q^{n+\theta}\nabla\cdot({\bf u}^{*}\phi_k^{*}),1\right)+M(\mu_k^{n+\theta},1)=0.\\
        &(\mu_k^{n+\theta},1)=\lambda(-\Delta\phi_k^{n+\theta},1)+\lambda(\bar{H}_k^{*}r^{n+\theta},1)+\lambda (\gamma^{*}r^{n+\theta},1).
    \end{align}
    Using the integration by parts and the definitions of $\bar{H}_k^{*}$ and $\gamma^{*}$, we get
    \begin{equation}
        (\bar{H}_k^{*},1)=0,\ \ (\gamma^{*},1)=0,\ \ \left(\nabla\cdot({\bf u}^{*}\phi_k^{*}),1\right)=0,\ \
        (-\Delta\phi_k^{n+\theta},1)=0.
    \end{equation}
    Combining the above equations, we arrive at the following equation
    \begin{equation*}
        \int_{\Omega}\phi_k^{n+1}d{\bf x}
        =\frac{4\theta}{2\theta+1}\int_{\Omega}\phi_k^nd{\bf x}
        -\frac{2\theta-1}{2\theta+1}\int_{\Omega}\phi_k^{n-1}d{\bf x}.
    \end{equation*}
    From Remark \ref{Remark initial steps}, we know $\int_{\Omega}\phi_k^1d{\bf x}=\int_{\Omega}\phi_k^0d{\bf x}$.
    Applying mathematical induction, we can obtain
    $$\int_{\Omega}\phi_k^{n+1}d{\bf x}=\int_{\Omega}\phi_k^nd{\bf x}=\cdots=\int_{\Omega}\phi_k^1d{\bf x}=\int_{\Omega}\phi_k^0d{\bf x},\qquad k=1,2,\dots,N.$$
    Thus, we complete the proof.
\end{proof}

\begin{theorem}
\label{rrrmodel1_energy}When $\theta \in [1/2,1]$, the discrete scheme \eqref{eqn:rrrmodel1_a}-\eqref{eqn:rrrmodel1_g} satisfies the discrete energy dissipation law in the sense that
    \begin{equation*}
        {\tilde {\mathcal{E}}}^{n+1}\leq {\tilde {\mathcal{E}}}^n,
    \end{equation*}
where ${\tilde {\mathcal{E}}}^{n+1}$ is the modified energy functional defined by
    \begin{equation*}
        {\tilde {\mathcal{E}}}^{n+1}=
        \frac{\lambda}{2}\sum_{k=1}^{N}\left\|
        \begin{bmatrix}
            \nabla\phi_k^{n+1}\\
            \nabla\phi_k^n
        \end{bmatrix}
        \right\|_{\bf G}^2
        +\lambda\left|
        \begin{bmatrix}
            r^{n+1}\\
            r^n
        \end{bmatrix}
        \right|_{\bf G}^2
        +\frac{1}{2}\left|
        \begin{bmatrix}
            q^{n+1}\\
            q^n
        \end{bmatrix}
        \right|_{\bf G}^2
        +\frac{1}{2}\left\|
        \begin{bmatrix}
            {\bf u}^{n+1}\\
            {\bf u}^n
        \end{bmatrix}
        \right\|_{\bf G}^2
        +\frac{\theta^2\Delta t^2}{2\theta+1}\left\|\nabla p^{n+1}\right\|^2.
    \end{equation*}
\end{theorem}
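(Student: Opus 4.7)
The strategy is to mimic the continuous energy argument in the proof of Theorem \ref{rrmodel1_energy}, replacing each time derivative by the weighted three-level discrete derivative $\partial_t^\theta w^{n+1} := \tfrac{1}{\Delta t}\bigl(\tfrac{2\theta+1}{2}w^{n+1}-2\theta w^n+\tfrac{2\theta-1}{2}w^{n-1}\bigr)$ and using Lemma \ref{lemma_GF-norm} and Remark \ref{remark_GF} to convert the resulting inner products into telescoping $G$-norm differences plus a nonnegative remainder. I would carry out the test-function choices $\mu_k^{n+\theta}$ against \eqref{eqn:rrrmodel1_a}, $\lambda\,\partial_t^\theta \phi_k^{n+1}$ against \eqref{eqn:rrrmodel1_b}, $2\lambda r^{n+\theta}$ against \eqref{eqn:rrrmodel1_c}, $\tilde{\bf u}^{n+\theta}$ against \eqref{eqn:rrrmodel1_d}, and $q^{n+\theta}$ against \eqref{eqn:rrrmodel1_g}, and then multiply \eqref{eqn:rrrmodel1_d} by $\Delta t$ before summing.

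After summing these identities over $k=1,\dots,N$, three families of cross terms must cancel exactly, mirroring the zero-energy-contribution structure used in Theorem \ref{rrmodel1_energy}: (i) the advection contribution $q^{n+\theta}\sum_k(\nabla\cdot({\bf u}^*\phi_k^*),\mu_k^{n+\theta})$ from \eqref{eqn:rrrmodel1_a} cancels the first term obtained from \eqref{eqn:rrrmodel1_g} after multiplication by $q^{n+\theta}$; (ii) the capillary contribution $q^{n+\theta}\sum_k(\phi_k^*\nabla\mu_k^*,\tilde{\bf u}^{n+\theta})$ from \eqref{eqn:rrrmodel1_d} cancels the second term from \eqref{eqn:rrrmodel1_g}; (iii) the inertial term $q^{n+\theta}({\bf u}^*\cdot\nabla{\bf u}^*,\tilde{\bf u}^{n+\theta})$ cancels the third term from \eqref{eqn:rrrmodel1_g}. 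The Lagrange-multiplier contribution $\lambda\sum_k(\gamma^* r^{n+\theta},\partial_t^\theta\phi_k^{n+1})$ vanishes because $\sum_k\partial_t^\theta\phi_k^{n+1}=0$ by \eqref{eqn:N_condition}, while the SAV term $\lambda\sum_k(\bar H_k^* r^{n+\theta},\partial_t^\theta\phi_k^{n+1})$ is exactly matched by the right-hand side of \eqref{eqn:rrrmodel1_c} tested with $2\lambda r^{n+\theta}$.

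The main obstacle is handling the pressure/projection step \eqref{eqn:rrrmodel1_e} so as to recover the term $\tfrac{\theta^2\Delta t^2}{2\theta+1}\|\nabla p^{n+1}\|^2$ in $\tilde{\mathcal E}^{n+1}$. I would rewrite \eqref{eqn:rrrmodel1_e} as $\tilde{\bf u}^{n+1} = {\bf u}^{n+1}+\tfrac{2\theta\Delta t}{2\theta+1}\nabla(p^{n+\theta}-p^n)$, substitute into the kinetic-energy identity obtained from \eqref{eqn:rrrmodel1_d}, and use $\nabla\cdot{\bf u}^{n+1}=0$ together with the boundary conditions to kill the cross term $({\bf u}^{n+1},\nabla(p^{n+\theta}-p^n))=0$. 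What remains is $\tfrac{1}{2}\|\tilde{\bf u}^{n+1}\|^2 = \tfrac{1}{2}\|{\bf u}^{n+1}\|^2 + \tfrac{2\theta^2\Delta t^2}{(2\theta+1)^2}\|\nabla(p^{n+\theta}-p^n)\|^2$; writing $\nabla(p^{n+\theta}-p^n)=\theta\nabla(p^{n+1}-p^n)$ and completing the square against the pressure gradients from the previous step then telescopes into $\tfrac{\theta^2\Delta t^2}{2\theta+1}(\|\nabla p^{n+1}\|^2-\|\nabla p^n\|^2)$ plus a sign-definite remainder, provided the test function $\tilde{\bf u}^{n+\theta}$ and the factor $\Delta t$ are chosen consistently. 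Simultaneously, the pressure term $(\nabla p^n,\tilde{\bf u}^{n+\theta})$ appearing from testing \eqref{eqn:rrrmodel1_d} must be rewritten via \eqref{eqn:rrrmodel1_e} and \eqref{eqn:rrrmodel1_f} to avoid spurious contributions.

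Finally, applying Lemma \ref{lemma_GF-norm} to each of the four quadratic couples $(\nabla\phi_k^{n+1},\nabla\phi_k^n)$, $({\bf u}^{n+1},{\bf u}^n)$ and (through Remark \ref{remark_GF}) $(r^{n+1},r^n)$, $(q^{n+1},q^n)$, converts the inner products $(\partial_t^\theta w^{n+1},w^{n+\theta})$ into $\tfrac{1}{2\Delta t}\bigl(\|[w^{n+1};w^n]\|_G^2-\|[w^n;w^{n-1}]\|_G^2\bigr)$ plus a nonnegative term proportional to $\|w^{n+1}-2w^n+w^{n-1}\|^2$. Collecting the telescoped differences gives $\tilde{\mathcal E}^{n+1}-\tilde{\mathcal E}^n$ on the left, while the right side consists of the strictly dissipative contributions $-M\Delta t\sum_k\|\mu_k^{n+\theta}\|^2-\Delta t\|\sqrt{\nu}\nabla\tilde{\bf u}^{n+\theta}\|^2$ together with the nonnegative $G$-remainders coming with a minus sign; since $\theta(2\theta-1)/4\ge 0$ for $\theta\in[1/2,1]$, the inequality $\tilde{\mathcal E}^{n+1}\le\tilde{\mathcal E}^n$ follows. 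I anticipate that bookkeeping the pressure term through the projection step will be the most delicate part, since it is the only place where the scheme is genuinely three-level in the velocity via the auxiliary field $\tilde{\bf u}^{n+1}$.
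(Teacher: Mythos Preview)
Your proposal is correct and follows essentially the same approach as the paper: identical test functions ($\mu_k^{n+\theta}$, $\partial_t^\theta\phi_k^{n+1}$, $2\lambda r^{n+\theta}$, $\tilde{\bf u}^{n+\theta}$, $q^{n+\theta}$), the same zero-energy-contribution cancellations via \eqref{eqn:rrrmodel1_g}, and the same use of Lemma~\ref{lemma_GF-norm}/Remark~\ref{remark_GF} to produce telescoping $G$-norms. For the projection step the paper phrases the argument slightly differently---it uses the orthogonality $({\bf u}^{n+1},{\bf v})=(\tilde{\bf u}^{n+1},{\bf v})$ for divergence-free ${\bf v}$ to rewrite the mixed kinetic inner product directly (their (3.25)), whereas you substitute $\tilde{\bf u}^{n+1}={\bf u}^{n+1}+\tfrac{2\Delta t}{2\theta+1}\nabla(p^{n+\theta}-p^n)$ and expand; the two computations are equivalent and yield the same remainder $\tfrac{\theta^2\Delta t^2(2\theta-1)}{2\theta+1}\|\nabla(p^{n+1}-p^n)\|^2\ge 0$. (Note a small coefficient slip in your write-up: the factor in front of $\nabla(p^{n+\theta}-p^n)$ should be $\tfrac{2\Delta t}{2\theta+1}$, not $\tfrac{2\theta\Delta t}{2\theta+1}$; the extra $\theta$ appears only after you convert $p^{n+\theta}-p^n$ to $\theta(p^{n+1}-p^n)$.)
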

\begin{proof}
    By taking the $L^2$-inner product of \eqref{eqn:rrrmodel1_a} with $\Delta t\mu _k^{n+\theta}$, we obtain
    \begin{align}
        \left(
        \frac{2\theta+1}{2}\phi_k^{n+1}-2\theta\phi_k^n+\frac{2\theta-1}{2}\phi_k^{n-1},\mu_k^{n+\theta}
        \right )
       +\Delta t \left(q^{n+\theta}\nabla\cdot({\bf u}^{*}\phi_k^{*}),\mu_k^{n+\theta}\right)+M\Delta t \left \|\mu_k^{n+\theta}\right \|^2=0.
        \label{eqn:rrrmodel1_a1}
    \end{align}
    By computing the $L^2$-inner product of \eqref{eqn:rrrmodel1_b} with $\frac{2\theta+1}{2}\phi_k^{n+1}-2\theta\phi_k^n+\frac{2\theta-1}{2}\phi_k^{n-1}$ and from Lemma \eqref{lemma_GF-norm}, we get
    \begin{align}
        &\left(\mu_k^{n+\theta},\frac{2\theta+1}{2}\phi_k^{n+1}-2\theta\phi_k^n+\frac{2\theta-1}{2}\phi_k^{n-1}\right)
        \nonumber\\
        =&\lambda\left(-\Delta\phi_k^{n+\theta},\frac{2\theta+1}{2}\phi_k^{n+1}
        -2\theta\phi_k^n+\frac{2\theta-1}{2}\phi_k^{n-1}\right)
        \nonumber\\
        &+\lambda\left((\bar{H}_k^{*}+\gamma^{*})r^{n+\theta},\frac{2\theta+1}{2}\phi_k^{n+1}-2\theta\phi_k^n+\frac{2\theta-1}{2}\phi_k^{n-1}\right)
        \nonumber\\
        =&\lambda\left(\nabla\phi_k^{n+\theta},\frac{2\theta+1}{2}\nabla\phi_k^{n+1}
        -2\theta\nabla\phi_k^n
        +\frac{2\theta-1}{2}
        \nabla\phi_k^{n-1}\right)
        \nonumber\\
        &+\lambda\left((\bar{H}_k^{*}+\gamma^{*})r^{n+\theta},\frac{2\theta+1}{2}\phi_k^{n+1}-2\theta\phi_k^n+\frac{2\theta-1}{2}\phi_k^{n-1}\right)
        \nonumber\\
        =&\lambda\left(
        \frac{1}{2}
        \left\|
        \begin{bmatrix}
            \nabla\phi_k^{n+1}\\
            \nabla\phi_k^n
        \end{bmatrix}
        \right\|_{\bf G}^2
        -\frac{1}{2}
        \left\|
        \begin{bmatrix}
            \nabla\phi_k^n\\
            \nabla\phi_k^{n-1}
        \end{bmatrix}
        \right\|_{\bf G}^2
        +\frac{\theta(2\theta-1)}{4}\left\|\nabla\phi_k^{n+1}-2\nabla\phi_k^n+\nabla\phi_k^{n-1}\right\|^2\right)
        \nonumber\\  &+\lambda\left(\bar{H}_k^{*}r^{n+\theta},\frac{2\theta+1}{2}\phi_k^{n+1}-2\theta\phi_k^n+\frac{2\theta-1}{2}\phi_k^{n-1}\right)
        \nonumber\\
        &+\lambda\left(\gamma^{*}r^{n+\theta},\frac{2\theta+1}{2}\phi_k^{n+1}-2\theta\phi_k^n+\frac{2\theta-1}{2}\phi_k^{n-1}\right).
        \label{eqn:rrrmodel1_b1}
    \end{align}
    Combining the above equations and taking the summation for $k=1,2,\dots,N$, we have
    \begin{align}
        &\lambda\sum_{k=1}^{N}\left(
        \frac{1}{2}
        \left\|
        \begin{bmatrix}
            \nabla\phi_k^{n+1}\\
                \nabla\phi_k^n
        \end{bmatrix}
             \right\|_{\bf G}^2
        -\frac{1}{2}
        \left\|
        \begin{bmatrix}
            \nabla\phi_k^n\\
            \nabla\phi_k^{n-1}
        \end{bmatrix}
        \right\|_{\bf G}^2
        +\frac{\theta(2\theta-1)}{4}\left\|\nabla\phi_k^{n+1}-2\nabla\phi_k^n+\nabla\phi_k^{n-1}\right\|^2
        \right)
        \nonumber \\
        &+\lambda\sum_{k=1}^{N}\left(\bar{H}_k^{*}r^{n+\theta},\frac{2\theta+1}{2}\phi_k^{n+1}-2\theta\phi_k^n+\frac{2\theta-1}{2}\phi_k^{n-1}\right)
        \nonumber\\
        &+\Delta t \sum_{k=1}^{N}\left(q^{n+\theta}\nabla\cdot({\bf u}^{*}\phi_k^{*}),\mu_k^{n+\theta}\right )
        =-M\Delta t \sum_{k=1}^{N}\left \|\mu_k^{n+\theta}\right \|^2,
        \label{eqn:rrrmodel1_a1b1}
    \end{align}
    where the following equation is used
    \begin{align}
        &\sum_{k=1}^{N}\left(\gamma^{*}r^{n+\theta},\frac{2\theta+1}{2}\phi_k^{n+1}-2\theta\phi_k^n+\frac{2\theta-1}{2}\phi_k^{n-1}\right)
        \nonumber\\
        =&\left(\gamma^{*}r^{n+\theta},\sum_{k=1}^{N} \left(\frac{2\theta+1}{2}\phi_k^{n+1}-2\theta\phi_k^n+\frac{2\theta-1}{2}\phi_k^{n-1}\right )\right)=0.
    \end{align}
    Multiplying \eqref{eqn:rrrmodel1_c} with $2\lambda\Delta tr^{n+\theta}$ and from Remark \ref{remark_GF}, we get
    \begin{align}
        &2\lambda\left(
        \frac{1}{2}
        \left|
        \begin{bmatrix}
            r^{n+1}\\
            r^n-
        \end{bmatrix}
        \right|_{\bf G}^2
        -\frac{1}{2}
        \left|
        \begin{bmatrix}
            r^n\\
            r^{n-1}
        \end{bmatrix}
        \right|_{\bf G}^2
        +\frac{\theta(2\theta-1)}{4}\left|r^{n+1}-2r^n+r^{n-1}\right|^2\right)
        \nonumber\\
        &=\lambda\sum_{k=1}^{N}\int_\Omega \bar{H}_k^{*}r^{n+\theta}\left(\frac{2\theta+1}{2}\phi_k^{n+1}-2\theta\phi_k^n+\frac{2\theta-1}{2}\phi_k^{n-1}\right)d{\bf x}.
        \label{eqn:rrrmodel1_c1}
    \end{align}
    By taking the $L^2$-inner product of \eqref{eqn:rrrmodel1_d} with $\Delta t{\bf \tilde u}^{n+\theta}$, we have
    \begin{align}
        &\left(\frac{2\theta+1}{2}{\bf \tilde u}^{n+1}-2\theta {\bf u}^n+\frac{2\theta-1}{2}{\bf u}^{n-1},{\bf \tilde u}^{n+\theta}\right)
        +\Delta t(q^{n+\theta}{\bf u}^{*}\cdot \nabla{\bf u}^{*},{\bf \tilde u}^{n+\theta})
        \nonumber\\
        &+\Delta t\left\|\sqrt{\nu}\nabla {\bf \tilde u}^{n+\theta}\right\|^2
        +\Delta t(\nabla p^n,{\bf \tilde u}^{n+\theta})+\Delta t(q^{n+\theta}\sum_{k=1}^{N}\phi_k^{*}\nabla \mu _k^{*},{\bf \tilde u}^{n+\theta})=0.
        \label{eqn:rrrmodel1_d1}
    \end{align}
    From \eqref{eqn:rrrmodel1_e}, for any variable $\bf v$ with $\nabla \cdot {\bf v}=0$ and ${\bf v}\cdot {\bf n}\left|_{\partial \Omega}\right. =0$, we obtain
    \begin{equation}
    \label{eq:utilde_u}
        ({\bf u}^{n+1},{\bf v})=({\bf \tilde u}^{n+1},{\bf v}).
    \end{equation}
    From \eqref{eq:utilde_u} and Lemma \ref{lemma_GF-norm}, we derive the following equality
    \begin{align}
        &\left(\frac{2\theta+1}{2}{\bf \tilde u}^{n+1}-2\theta {\bf u}^n+\frac{2\theta-1}{2}{\bf u}^{n-1},{\bf \tilde u}^{n+\theta}\right)
        \nonumber\\
        =&\left(\frac{2\theta+1}{2}{\bf \tilde u}^{n+1}-2\theta {\bf u}^n+\frac{2\theta-1}{2}{\bf u}^{n-1},\theta {\bf \tilde u}^{n+1}+(1-\theta){\bf u}^n+\theta {\bf u}^{n+1}-\theta {\bf u}^{n+1}\right)
        \nonumber\\
        =&\left(\frac{2\theta+1}{2}{\bf \tilde u}^{n+1}-2\theta {\bf u}^n+\frac{2\theta-1}{2}{\bf u}^{n-1},\theta {\bf u}^{n+1}+(1-\theta){\bf u}^n+\theta ({\bf \tilde u}^{n+1}-{\bf  u}^{n+1})\right)
        \nonumber\\
        =&\left(\frac{2\theta+1}{2}{\bf \tilde u}^{n+1}-2\theta {\bf u}^n+\frac{2\theta-1}{2}{\bf u}^{n-1},\theta {\bf u}^{n+1}+(1-\theta){\bf u}^n\right)
        \nonumber\\
        &+\theta\left(\frac{2\theta+1}{2}{\bf \tilde u}^{n+1}-2\theta {\bf u}^n+\frac{2\theta-1}{2}{\bf u}^{n-1},{\bf \tilde u}^{n+1}-{\bf  u}^{n+1}\right)
        \nonumber\\
        =&\left(\frac{2\theta+1}{2}{\bf u}^{n+1}-2\theta {\bf u}^n+\frac{2\theta-1}{2}{\bf u}^{n-1},\theta {\bf u}^{n+1}+(1-\theta){\bf u}^n\right)
        \nonumber\\
        &+\frac{(2\theta+1)\theta}{2}\left({\bf \tilde u}^{n+1},{\bf \tilde u}^{n+1}-{\bf  u}^{n+1}\right)
        \nonumber\\
        =&\left(\frac{2\theta+1}{2}{\bf u}^{n+1}-2\theta {\bf u}^n+\frac{2\theta-1}{2}{\bf u}^{n-1},\theta {\bf u}^{n+1}+(1-\theta){\bf u}^n\right)
        \nonumber\\
        &+\frac{(2\theta+1)\theta}{2}\left({\bf \tilde u}^{n+1}+{\bf  u}^{n+1},{\bf \tilde u}^{n+1}-{\bf  u}^{n+1}\right)
        \nonumber\\
        =&\frac{1}{2}
        \left\|
        \begin{bmatrix}
            {\bf u}^{n+1}\\
            {\bf u}^n
        \end{bmatrix}
        \right\|_{\bf G}^2
        -\frac{1}{2}
        \left\|
        \begin{bmatrix}
            {\bf u}^n\\
            {\bf u}^{n-1}
        \end{bmatrix}
        \right\|_{\bf G}^2
        +\frac{\theta(2\theta-1)}{4}\left\|{\bf  u}^{n+1}-2{\bf u}^n+{\bf u}^{n-1}\right\|^2
        \nonumber\\
        &+\frac{(2\theta+1)\theta}{2}\left(\left\|{\bf \tilde u}^{n+1}\right\|^2-\left\|{\bf  u}^{n+1}\right\|^2\right).
        \label{eqn:rrrmodel1_d2}
    \end{align}
    By computing the $L^2$-inner product of \eqref{eqn:rrrmodel1_e} with ${\bf u}^{n+1}$ and using the identity $2(a-b)a=a^2-b^2+(a-b)^2$ and \eqref{eqn:rrrmodel1_f}, we get
    \begin{equation}
        \left\|{\bf u}^{n+1}\right\|^2-\left\|{\bf \tilde u}^{n+1}\right\|^2
        +\left\|{\bf u}^{n+1}-{\bf \tilde u}^{n+1}\right\|^2
        =0.
    \end{equation}
    Obviously, the equation \eqref{eqn:rrrmodel1_e} can be written as
    \begin{equation}
        \frac{2\theta+1}{2\Delta t}({\bf u}^{n+1}-{\bf \tilde u}^{n+1})=-\theta(\nabla p^{n+1}-\nabla p^{n}).
    \end{equation}
    Taking the square of both sides of the above equation and integrating over $\Omega$, we obtain
    \begin{equation}
        \left\|{\bf u}^{n+1}-{\bf \tilde u}^{n+1}\right\|^2
        =\frac{4\Delta t^2\theta^2}{(2\theta+1)^2}\left\|\nabla p^{n+1}-\nabla p^n\right\|^2.
    \end{equation}
    Thus, we have
    \begin{equation}
        \left\|{\bf u}^{n+1}\right\|^2-\left\|{\bf \tilde u}^{n+1}\right\|^2
        =-\left\|{\bf u}^{n+1}-{\bf \tilde u}^{n+1}\right\|^2
        =-\frac{4\Delta t^2\theta^2}{(2\theta+1)^2}\left\|\nabla p^{n+1}-\nabla p^n\right\|^2.
        \label{eqn:rrrmodel1_e1}
    \end{equation}
    We reformulate the projection step \eqref{eqn:rrrmodel1_e} as
    \begin{equation}
        \frac{\frac{2\theta+1}{2\theta}\left(\theta{\bf u}^{n+1}+(1-\theta){\bf u}^n-{\bf \tilde u}^{n+\theta}\right)}{\Delta t}+\theta(\nabla p^{n+1}-\nabla p^n)=0,
        \label{eqn:rrrmodel1_e2}
    \end{equation}
    where we apply the equality derived below
    \begin{align}
        &\frac{2\theta+1}{2}{\bf  u}^{n+1}-\frac{2\theta+1}{2}{\bf \tilde u}^{n+1}
        =\frac{2\theta+1}{2\theta}({\theta\bf u}^{n+1}-{\theta\bf \tilde u}^{n+1})
        \nonumber\\
        & =\frac{2\theta+1}{2\theta}\left(\theta{\bf u}^{n+1}+(1-\theta){\bf u}^n-\theta{\bf \tilde u}^{n+1}-(1-\theta){\bf u}^n\right)
        \nonumber\\
        & =\frac{2\theta+1}{2\theta}\left(\theta{\bf u}^{n+1}+(1-\theta){\bf u}^n-{\bf \tilde u}^{n+\theta}\right).
    \end{align}
    By taking the $L^2$-inner product of \eqref{eqn:rrrmodel1_e2} with $\nabla p^n$ and using the identity $2(a-b)b=a^2-b^2-(a-b)^2$, we obtain
    \begin{equation}
        \Delta t(\nabla p^n,{\bf \tilde u}^{n+\theta})=\frac{\theta^2\Delta t^2}{2\theta+1}\left(\left\|\nabla p^{n+1}\right\|^2-\left\|\nabla p^n\right\|^2-\left\|\nabla p^{n+1}-\nabla p^n\right\|^2\right).
        \label{eqn:rrrmodel1_e3}
    \end{equation}
    Combining \eqref{eqn:rrrmodel1_d1}, \eqref{eqn:rrrmodel1_d2}, \eqref{eqn:rrrmodel1_e1} and  \eqref{eqn:rrrmodel1_e3} gets
    \begin{align}
        &\frac{1}{2}
        \left\|
        \begin{bmatrix}
            {\bf u}^{n+1}\\
            {\bf u}^n
        \end{bmatrix}
        \right\|_{\bf G}^2
        -\frac{1}{2}
        \left\|
        \begin{bmatrix}
            {\bf u}^n\\
            {\bf u}^{n-1}
        \end{bmatrix}
        \right\|_{\bf G}^2
        +\frac{\theta(2\theta-1)}{4}\left\|{\bf  u}^{n+1}-2{\bf u}^n+{\bf u}^{n-1}\right\|^2
        \nonumber\\
        &+\Delta tq^{n+\theta}({\bf u}^{*}\cdot \nabla{\bf u}^{*},{\bf \tilde u}^{n+\theta})
        +\frac{\theta^2\Delta t^2}{2\theta+1}\left(\left\|\nabla p^{n+1}\right\|^2-\left\|\nabla p^n\right\|^2\right)
        \nonumber\\
        &+\Delta t \left(q^{n+\theta}\sum_{k=1}^{N}\phi_k^{*}\nabla \mu _k^{*},{\bf \tilde u}^{n+\theta}\right)
        \nonumber\\
        &=-\frac{\theta^2\Delta t^2}{2\theta+1}(2\theta-1)\left\|\nabla p^{n+1}-\nabla p^n\right\|^2-\Delta t\left\|\sqrt{\nu}\nabla {\bf \tilde u}^{n+\theta}\right\|^2.
        \label{eqn:rrrmodel1_d1e1}
    \end{align}
    By multiplying \eqref{eqn:rrrmodel1_g} with $\Delta tq^{n+\theta}$ and from Remark \ref{remark_GF}, we have
    \begin{align}
        &\frac{1}{2}
        \left |
        \begin{bmatrix}
            q^{n+1}\\
            q^n
        \end{bmatrix}
        \right |_{\bf G}^2
        -\frac{1}{2}
        \left |
        \begin{bmatrix}
            q^n\\
            q^{n-1}
        \end{bmatrix}
        \right |_{\bf G}^2
        +\frac{\theta(2\theta-1)}{4}\left|q^{n+1}-2q^n+q^{n-1}\right|^2
        \nonumber\\
        & =\Delta t \sum_{k=1}^{N}\int_\Omega q^{n+\theta}\nabla \cdot ({\bf u}^{*}\phi_k^{*})\mu_k^{n+\theta}d{\bf x}+\Delta t \sum_{k=1}^{N}\int_\Omega q^{n+\theta}\phi _k^{*} \nabla \mu_k^{*}\cdot {\bf \tilde u}^{n+\theta}d{\bf x}
        \nonumber\\
        &+\Delta t\int_\Omega q^{n+\theta}{\bf u}^{*}\cdot
        \nabla {\bf u }^{*}\cdot {\bf \tilde u}^{n+\theta}d{\bf x}.
        \label{eqn:rrrmodel1_g1}
    \end{align}
    Combining \eqref{eqn:rrrmodel1_a1b1}, \eqref{eqn:rrrmodel1_c1}, \eqref{eqn:rrrmodel1_d1e1} and \eqref{eqn:rrrmodel1_g1}, we obtain
    \begin{align}
        &\frac{\lambda}{2}
        \sum_{k=1}^{N} \left(
        \left\|
        \begin{bmatrix}
            \nabla \phi_k^{n+1}\\
            \nabla \phi_k^n
        \end{bmatrix}
        \right\|_{\bf G}^2
        -\left\|
        \begin{bmatrix}
            \nabla \phi_k^n\\
            \nabla \phi_k^{n-1}
        \end{bmatrix}
        \right\|_{\bf G}^2
        \right)
        +\lambda\left(
        \left|
        \begin{bmatrix}
            r^{n+1}\\
            r^n
        \end{bmatrix}
        \right|_{\bf G}^2
        -\left|
        \begin{bmatrix}
            r^n\\
            r^{n-1}
        \end{bmatrix}
        \right|_{\bf G}^2
        \right)
        +\frac{1}{2}\left(
        \left|
        \begin{bmatrix}
            q^{n+1}\\
            q^n
        \end{bmatrix}
        \right|_{\bf G}^2
        -\left|
        \begin{bmatrix}
            q^n\\
            q^{n-1}
        \end{bmatrix}
        \right|_{\bf G}^2
        \right)
        \nonumber\\
        &+\frac{1}{2}
        \left(
        \left\|
        \begin{bmatrix}
            {\bf u}^{n+1}\\
            {\bf u}^n
        \end{bmatrix}
        \right\|_{\bf G}^2
        -\left\|
        \begin{bmatrix}
            {\bf u}^n\\
            {\bf u}^{n-1}
        \end{bmatrix}
        \right\|_{\bf G}^2
        \right)
        +\frac{\theta^2\Delta t^2}{2\theta+1}\left(\left\|\nabla p^{n+1}\right\|^2-\left\|\nabla p^n\right\|^2\right)
        \nonumber
        \nonumber\\
        &=-M\Delta t \sum_{k=1}^{N}\left \|\mu_k^{n+\theta}\right \|^2
        -\frac{\lambda\theta(2\theta-1)}{4}
        \sum_{k=1}^{N}\left\|\nabla\phi_k^{n+1}-2\nabla\phi_k^n+\nabla\phi_k^{n-1}\right\|^2
        -\frac{\lambda\theta(2\theta-1)}{2}\left|r^{n+1}-2r^n+r^{n-1}\right|^2
        \nonumber\\
        &-\frac{\theta(2\theta-1)}{4}\left\|{\bf  u}^{n+1}-2{\bf u}^n+{\bf u}^{n-1}\right\|^2
        -\frac{\theta^2\Delta t^2}{2\theta+1}(2\theta-1)\left\|\nabla p^{n+1}-\nabla p^n\right\|^2
        \nonumber\\
        &-\Delta t\left\|\sqrt{\nu}\nabla {\bf \tilde u}^{n+\theta}\right\|^2
        -\frac{\theta(2\theta-1)}{4}\left|q^{n+1}-2q^n+q^{n-1}\right|^2\leq 0.
        \nonumber
    \end{align}
We have completed the proof for the energy dissipation law of \eqref{eqn:rrrmodel1}.
\end{proof}

\subsection{N-component D-CAC model}
\label{sec-3.2}
We still introduce two scalar auxiliary variables
\begin{equation}
    r(t)=\sqrt{\int_\Omega \sum_{k=1}^{N}F(\phi_k)d{\bf x}+C},\quad\quad q(t)\equiv 1,
\end{equation}
then the system \eqref{eqn:rmodel2} can be written as
\begin{subequations}
\label{eqn:rrmodel2}
\begin{align}
&\frac{\partial\phi_k}{\partial t}+q\nabla \cdot ({\bf{u}}\phi_k)+M\mu_k=0, \label{eqn:rrmodel2_a}\\
&\mu_k=\lambda\left(-\Delta \phi_k + (\bar{H}_k+\gamma)r \right), \label{eqn:rrmodel2_b}\\
&\frac{dr}{dt}=\frac{1}{2}\sum_{k=1}^{N}\int_\Omega {\bar H}_k\frac{\partial \phi_k}{\partial t}d{\bf x},
\label{eqn:rrmodel2_c}\\
&\tau\frac{\partial \bf u}{\partial t} +\alpha \nu {\bf u} +\nabla p+q\sum_{k=1}^{N} \phi _k \nabla \mu_k=0,
\label{eqn:rrmodel2_d}\\
&\nabla \cdot \bf u=0,
\label{eqn:rrmodel2_e}\\
&\frac{dq}{dt}=\sum_{k=1}^{N}\int_\Omega \nabla \cdot ({\bf u}\phi_k)\mu_kd{\bf x}+\sum_{k=1}^{N}\int_\Omega\phi _k \nabla \mu_k\cdot {\bf u}d{\bf x},
\label{eqn:rrmodel2_f}
\end{align}
\end{subequations}
where
\begin{equation}
    H_k=\frac{f_k(\phi _k)}{\sqrt{\int_\Omega \sum_{k=1}^{N}F(\phi_k)d{\bf x}+C}},\ \
    \bar{H}_k=H_k-\frac{1}{|\Omega|}\int_\Omega H_kd{\bf x},\ \
    \gamma = -\frac{1}{N}\sum_{k=1}^{N}\bar{H}_k.
    \label{eqn:rrmodel2_g}
\end{equation}
\par
Similar to the approaches of the N-component NS-CAC model, we could give the mass conservation and energy stability properties of the system \eqref{eqn:rrmodel2}.
\begin{theorem}
    The solutions of the system \eqref{eqn:rrmodel2} satisfy the mass conservation of each phase.
\end{theorem}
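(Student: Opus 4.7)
The plan is to mirror the argument used for Theorem \ref{rrmodel1_mass} almost verbatim, since the continuity equation \eqref{eqn:rrmodel2_a} and the chemical-potential equation \eqref{eqn:rrmodel2_b} for the D-CAC system have exactly the same structure as \eqref{eqn:rrmodel1_a}--\eqref{eqn:rrmodel1_b} for the NS-CAC system. The only equations that differ between the two models are the momentum balance \eqref{eqn:rrmodel2_d} and the auxiliary ODE \eqref{eqn:rrmodel2_f}, neither of which enters the mass-conservation argument. Hence the proof goes through with no modification of the essential steps.

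Concretely, I would first take the $L^2$-inner product of \eqref{eqn:rrmodel2_a} with $1$ to obtain
\begin{equation*}
\left(\frac{\partial \phi_k}{\partial t},1\right) + \left(q\,\nabla\cdot(\mathbf{u}\phi_k),1\right) + M(\mu_k,1) = 0,
\end{equation*}
and separately take the $L^2$-inner product of \eqref{eqn:rrmodel2_b} with $1$ to express $(\mu_k,1)$ in terms of $(-\Delta\phi_k,1)$, $(\bar H_k r,1)$, and $(\gamma r,1)$. Next, I would invoke the divergence theorem together with the boundary condition $\mathbf{u}|_{\partial\Omega}=\mathbf{0}$ (or periodic b.c.) to kill the advection term $(q\,\nabla\cdot(\mathbf{u}\phi_k),1)=0$, and use $\nabla\phi_k\cdot\mathbf{n}|_{\partial\Omega}=0$ to kill $(-\Delta\phi_k,1)=0$. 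Then, directly from the definitions in \eqref{eqn:rrmodel2_g}, I would verify $(\bar H_k,1)=0$ and hence $(\gamma,1)=-\frac{1}{N}\sum_{k=1}^{N}(\bar H_k,1)=0$, exactly as in \eqref{eqn:addxxx1}. Since $r=r(t)$ is spatially constant, it factors out of $(\bar H_k r,1)$ and $(\gamma r,1)$, so both vanish.

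Combining these identities yields $(\mu_k,1)=0$ and therefore
\begin{equation*}
\frac{d}{dt}\int_{\Omega} \phi_k\, d\mathbf{x} = \left(\frac{\partial\phi_k}{\partial t},1\right)=0, \qquad k=1,2,\dots,N,
\end{equation*}
which is the desired mass conservation for each phase. There is no genuine obstacle here; the argument is a direct replica of the NS-CAC case, and the only thing to be careful about is noting explicitly that the Darcy momentum equation \eqref{eqn:rrmodel2_d} and the auxiliary equation \eqref{eqn:rrmodel2_f} play no role, so that the reader sees the reduction to the previous proof is legitimate.
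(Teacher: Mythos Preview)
Your proposal is correct and matches the paper's own approach exactly: the paper simply states that the result follows just as in Theorem~\ref{rrmodel1_mass}, and your write-up spells out precisely that reduction, noting that only \eqref{eqn:rrmodel2_a}--\eqref{eqn:rrmodel2_b} are involved while \eqref{eqn:rrmodel2_d} and \eqref{eqn:rrmodel2_f} play no role.
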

\begin{proof}
    We can easily obtain $\frac{d}{dt} \int_\Omega \phi_k d{\bf x}=0$, which is similar to Theorem \ref{rrmodel1_mass}.
\end{proof}
\par
\begin{theorem}
    The system \eqref{eqn:rrmodel2} satisfies the energy dissipation law.
\end{theorem}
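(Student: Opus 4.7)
The plan is to mirror the proof of Theorem \ref{rrmodel1_energy} for the NS-CAC case, exploiting the zero-energy-contribution structure of the reformulation. First I would take the $L^2$-inner product of \eqref{eqn:rrmodel2_a} with $\mu_k$ and of \eqref{eqn:rrmodel2_b} with $\partial\phi_k/\partial t$, add the two identities, and sum over $k=1,\dots,N$. After integration by parts, this yields
\begin{equation*}
\frac{d}{dt}\left(\frac{\lambda}{2}\sum_{k=1}^{N}\|\nabla\phi_k\|^2\right)
+\lambda\sum_{k=1}^{N}\left(\bar H_k r,\tfrac{\partial\phi_k}{\partial t}\right)
+q\sum_{k=1}^{N}(\nabla\cdot({\bf u}\phi_k),\mu_k)
=-M\sum_{k=1}^{N}\|\mu_k\|^2,
\end{equation*}
where the $\gamma$-contribution drops out since $\sum_{k}\partial\phi_k/\partial t=0$ by \eqref{eqn:N_condition}.

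Next I would multiply the auxiliary equation \eqref{eqn:rrmodel2_c} by $2\lambda r$ to produce $\lambda\frac{d}{dt}|r|^2=\lambda\sum_{k}\int_\Omega \bar H_k r\,\partial\phi_k/\partial t\,d{\bf x}$, which cancels the nonlocal term from the previous step. Then I would take the $L^2$-inner product of the Darcy momentum equation \eqref{eqn:rrmodel2_d} with ${\bf u}$, using \eqref{eqn:rrmodel2_e} together with the boundary conditions to kill the pressure term $(\nabla p,{\bf u})$, obtaining
\begin{equation*}
\frac{d}{dt}\left(\frac{\tau}{2}\|{\bf u}\|^2\right)+\alpha\|\sqrt{\nu}\,{\bf u}\|^2+q\sum_{k=1}^{N}(\phi_k\nabla\mu_k,{\bf u})=0.
\end{equation*}
Finally, multiplying \eqref{eqn:rrmodel2_f} by $q$ gives $\frac{d}{dt}(|q|^2/2)$ together with the two coupling terms weighted by $q$.

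Adding the four resulting identities, the integration-by-parts identity already used in \eqref{eqn:ad-sur}, namely $(\nabla\cdot({\bf u}\phi_k),\mu_k)+(\phi_k\nabla\mu_k,{\bf u})=0$, causes the coupling contributions (both those multiplied by $q$ in the $\phi_k/{\bf u}$ estimates and those appearing in the $q$-equation) to cancel exactly. What remains is
\begin{equation*}
\frac{d}{dt}\left(E({\bf u},\phi)+\lambda|r|^2+\tfrac{|q|^2}{2}\right)
=-M\sum_{k=1}^{N}\|\mu_k\|^2-\alpha\|\sqrt{\nu}\,{\bf u}\|^2\le 0,
\end{equation*}
which is the claimed dissipation law. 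The proof is essentially bookkeeping; the only point to watch is that the Darcy momentum equation lacks both the viscous $-\nu\Delta{\bf u}$ term and the convective ${\bf u}\cdot\nabla{\bf u}$ term, so the $q$-equation \eqref{eqn:rrmodel2_f} correctly omits the $\int_\Omega{\bf u}\cdot\nabla{\bf u}\cdot{\bf u}\,d{\bf x}$ contribution and no compensating calculation is needed. Hence there is no real obstacle beyond reproducing the cancellation structure of the NS-CAC case with the simpler Darcy dissipation $\alpha\|\sqrt{\nu}\,{\bf u}\|^2$ replacing $\|\sqrt{\nu}\,\nabla{\bf u}\|^2$.
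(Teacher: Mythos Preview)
Your proposal is correct and follows essentially the same route as the paper: test \eqref{eqn:rrmodel2_a}--\eqref{eqn:rrmodel2_b} as in the NS-CAC case, multiply \eqref{eqn:rrmodel2_c} by $2\lambda r$, test \eqref{eqn:rrmodel2_d} with ${\bf u}$, multiply \eqref{eqn:rrmodel2_f} by $q$, and let the coupling terms cancel. One small notational point: the dissipated quantity you obtain is $\frac{\lambda}{2}\sum_k\|\nabla\phi_k\|^2+\frac{\tau}{2}\|{\bf u}\|^2+\lambda|r|^2+\tfrac{1}{2}|q|^2$, so writing it as $E({\bf u},\phi)+\lambda|r|^2+\tfrac{1}{2}|q|^2$ double-counts the potential term (since $E$ already contains $\lambda\int\sum_k F(\phi_k)$ and $r^2=\int\sum_k F(\phi_k)+C$); the paper simply relabels the whole SAV energy as $E$.
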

\begin{proof}
    By taking the $L^2$-inner  product of \eqref{eqn:rrmodel2_d} with ${\bf u}$, it follows that
    \begin{equation}
    \label{eqn:rrmodel2_d1}
          \frac{d}{dt} \left(\frac{\tau}{2}\left\|{\bf u}\right \|^2\right)
          +\alpha\left\|\sqrt{\nu}{\bf u}\right \|^2+q\sum_{k=1}^{N} (\phi _k \nabla \mu_k,{\bf u})=0.
    \end{equation}
    Multiplying \eqref{eqn:rrmodel2_f} with $q$ obtains
    \begin{equation}
    \label{eqn:rrmodel2_f1}
           \frac{d}{dt}\left (\frac{|q|^2}{2}\right )=q\sum_{k=1}^{N}\int_\Omega \nabla \cdot ({\bf u}\phi_k)\mu_kd{\bf x}+q\sum_{k=1}^{N}\int_\Omega\phi _k \nabla \mu_k\cdot {\bf u}d{\bf x}.
    \end{equation}
Following same technique as Theorem \ref{rrmodel1_energy},
we can also get \eqref{eqn:rrmodel1_a1b1} and \eqref{eqn:rrmodel1_c1}. Combining them with \eqref{eqn:rrmodel2_d1}-\eqref{eqn:rrmodel2_f1}, we obtain the energy dissipative law as
    \begin{align*}
        \frac{d}{dt}E({\bf u,\phi})
        &=\frac{d}{dt}\left(\frac{\lambda}{2}\sum_{k=1}^{N}\left \|\nabla \phi_k\right \|^2\right)
        +\frac{d}{dt}\left(\frac{\tau}{2}\left\|{\bf u}\right\|^2\right)+ \frac{d}{dt}\left(\frac{|q|^2}{2}\right)+\lambda\frac{d}{dt}(|r|^2)\\
        &=-M\sum_{k=1}^{N}\left \|\mu_k\right \|^2-\alpha\left \|\sqrt{\nu}{\bf u}\right \|^2 \leq 0.
    \end{align*}
    Therefore, we finish the proof.
\end{proof}
\par
Similar to the scheme \eqref{eqn:rrrmodel1_a}-\eqref{eqn:rrrmodel1_g} for the N-component NS-CAC model, we also develop the following scheme to solve the system \eqref{eqn:rrmodel2}:
\begin{subequations}
 \label{eqn:rrrmodel2}
 \begin{align}
    &\frac{\frac{2\theta+1}{2}\phi_k^{n+1}-2\theta\phi_k^n+\frac{2\theta-1}{2}\phi_k^{n-1}}{\Delta t}+q^{n+\theta}\nabla\cdot({\bf u}^{*}\phi_k^{*})+M\mu_k^{n+\theta}=0,\label{eqn:rrrmodel2_a}\\
    &\mu_k^{n+\theta}=\lambda\left(-\Delta\phi_k^{n+\theta}+(\bar{H}_k^{*}+\gamma^{*})r^{n+\theta}\right),\label{eqn:rrrmodel2_b}\\
    &\frac{\frac{2\theta+1}{2}r^{n+1}-2\theta r^n+\frac{2\theta-1}{2}r^{n-1}}{\Delta t}=\frac{1}{2}\sum_{k=1}^{N}\int_\Omega \bar{H}_k^{*}\frac{\frac{2\theta+1}{2}\phi_k^{n+1}-2\theta\phi_k^n+\frac{2\theta-1}{2}\phi_k^{n-1}}{\Delta t}d{\bf x},
    \label{eqn:rrrmodel2_c}\\
    &\tau\frac{\frac{2\theta+1}{2}{\bf \tilde u}^{n+1}-2\theta {\bf u}^n+\frac{2\theta-1}{2}{\bf u}^{n-1}}{\Delta t}+\alpha\nu{\bf \tilde u}^{n+\theta}+\nabla p^n+q^{n+\theta}\sum_{k=1}^{N}\phi_k^{*}\nabla \mu _k^{*}=0,\label{eqn:rrrmodel2_d}\\
    &\tau\frac{\frac{2\theta+1}{2}{\bf u}^{n+1}-\frac{2\theta+1}{2}{\bf \tilde u}^{n+1}}{\Delta t}+(\nabla p^{n+\theta}-\nabla p^{n})=0,\label{eqn:rrrmodel2_e}\\
    &\nabla \cdot {\bf u}^{n+1}=0,
    \label{eqn:rrrmodel2_f}\\
    &\frac{\frac{2\theta+1}{2}q^{n+1}-2\theta q^n+\frac{2\theta-1}{2}q^{n-1}}{\Delta t}
    =\sum_{k=1}^{N}\int_\Omega \nabla \cdot ({\bf u}^{*}\phi_k^{*})\mu_k^{n+\theta}d{\bf x}+\sum_{k=1}^{N}\int_\Omega\phi _k^{*} \nabla \mu_k^{*}\cdot {\bf \tilde u}^{n+\theta}d{\bf x},
    \label{eqn:rrrmodel2_g}
 \end{align}
\end{subequations}
where the boundary conditions are  periodic or as follows
\begin{equation}
    \left.{\bf u}^{n+1}\cdot {\bf n}\right |_{\partial \Omega}=0,
    \left.{\bf \tilde u}^{n+1}\right |_{\partial \Omega}={\bf 0},
    \left.\nabla\phi_k^{n+1}\cdot {\bf n}\right |_{\partial \Omega}=\left.\nabla\mu_k^{n+1}\cdot {\bf n}\right |_{\partial \Omega}=\left.\nabla p^{n+1}\cdot {\bf n}\right |_{\partial \Omega}=0.
\end{equation}
A detailed implementation of the scheme \eqref{eqn:rrrmodel2_a}-\eqref{eqn:rrrmodel2_g} is shown  in Appendix \ref{A.2}.
Moreover, for the scheme \eqref{eqn:rrrmodel2}, we have the following mass conservation and energy stability results.
\begin{theorem}
    The discrete scheme \eqref{eqn:rrrmodel2_a}-\eqref{eqn:rrrmodel2_g} conserves the mass of the each phase.
\end{theorem}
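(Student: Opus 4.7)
The proof will proceed by imitating the argument used for Theorem \ref{rrrmodel1_mass}, since the phase equations \eqref{eqn:rrrmodel2_a}--\eqref{eqn:rrrmodel2_b} for the D-CAC scheme have exactly the same structure as \eqref{eqn:rrrmodel1_a}--\eqref{eqn:rrrmodel1_b} in the NS-CAC scheme; the momentum/projection equations do not enter the mass balance at all.

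The plan is to first take the $L^2$-inner product of \eqref{eqn:rrrmodel2_a} with $1$ and the $L^2$-inner product of \eqref{eqn:rrrmodel2_b} with $1$. Using the definitions \eqref{eqn:rrrmodel1_l}--\eqref{eqn:rrrmodel1_m} of $\bar{H}_k^{*}$ and $\gamma^{*}$, one gets $(\bar{H}_k^{*},1)=0$ and $(\gamma^{*},1)=0$ exactly as in \eqref{eqn:addxxx1}. The divergence theorem together with the boundary conditions $\left.{\bf u}^{n+1}\cdot{\bf n}\right|_{\partial\Omega}=0$ and $\left.\nabla\phi_k^{n+1}\cdot{\bf n}\right|_{\partial\Omega}=0$ yields $\bigl(\nabla\cdot({\bf u}^{*}\phi_k^{*}),1\bigr)=0$ and $(-\Delta\phi_k^{n+\theta},1)=0$. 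Substituting these identities into the two tested equations eliminates the right-hand side of the chemical-potential relation, so $(\mu_k^{n+\theta},1)=0$, which in turn reduces \eqref{eqn:rrrmodel2_a} tested against $1$ to
\begin{equation*}
\int_{\Omega}\phi_k^{n+1}d{\bf x}=\frac{4\theta}{2\theta+1}\int_{\Omega}\phi_k^{n}d{\bf x}-\frac{2\theta-1}{2\theta+1}\int_{\Omega}\phi_k^{n-1}d{\bf x}.
\end{equation*}

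To initialize the recursion, I would invoke (or spell out) a first-order starting scheme for \eqref{eqn:rrrmodel2} that is completely analogous to the one in Remark \ref{Remark initial steps}; the same integration-against-$1$ argument on its phase equations gives $\int_{\Omega}\phi_k^{1}d{\bf x}=\int_{\Omega}\phi_k^{0}d{\bf x}$. Then a straightforward induction on $n$ applied to the two-step recurrence above yields
\begin{equation*}
\int_{\Omega}\phi_k^{n+1}d{\bf x}=\int_{\Omega}\phi_k^{n}d{\bf x}=\cdots=\int_{\Omega}\phi_k^{0}d{\bf x},\qquad k=1,2,\dots,N,
\end{equation*}
which is the claimed mass conservation for each phase.

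There is essentially no obstacle here: the momentum equation \eqref{eqn:rrrmodel2_d}, the projection step \eqref{eqn:rrrmodel2_e}, the divergence constraint \eqref{eqn:rrrmodel2_f}, and the SAV evolutions \eqref{eqn:rrrmodel2_c} and \eqref{eqn:rrrmodel2_g} never enter the computation; the only ingredients are the two Lagrange-multiplier identities $(\bar{H}_k^{*},1)=(\gamma^{*},1)=0$, the boundary conditions, and the initialization step. The mildest subtlety is simply making sure the boundary conditions used in the D-CAC setting (which are stated right after \eqref{eqn:rrrmodel2_g}) suffice to kill the divergence and Laplacian terms under integration, but this is immediate from ${\bf u}^{n+1}\cdot{\bf n}=0$ and $\nabla\phi_k^{n+1}\cdot{\bf n}=0$ on $\partial\Omega$.
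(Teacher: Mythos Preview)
Your proposal is correct and follows essentially the same approach as the paper, which simply refers back to the routine of Theorem~\ref{rrrmodel1_mass} to conclude $\int_\Omega \phi_k^{n+1}d{\bf x}=\int_\Omega \phi_k^nd{\bf x}=\cdots=\int_\Omega \phi_k^0d{\bf x}$ for each $k$. Your write-up in fact spells out more detail than the paper does, but the logic is identical.
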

\begin{proof}
  We can readily obtain
    $\int_\Omega \phi_k^{n+1}d{\bf x}=\int_\Omega \phi_k^nd{\bf x}=\cdots=\int_\Omega \phi_k^0d{\bf x}$ by the same routine of Theorem \ref{rrrmodel1_mass}  for $k=1,2,\dots,N$.
\end{proof}

\begin{theorem}
   The discrete scheme \eqref{eqn:rrrmodel2_a}-\eqref{eqn:rrrmodel2_g} satisfies the discrete energy dissipation law in the sense that
    \begin{equation}
        {\tilde E}^{n+1}\leq {\tilde E}^n,
    \end{equation}
where ${\tilde E}^{n+1}$ is the modified energy functional defined by
    \begin{equation*}
        {\tilde E}^{n+1}=
        \frac{\lambda}{2}\sum_{k=1}^{N}\left\|
        \begin{bmatrix}
            \nabla\phi_k^{n+1}\\
            \nabla\phi_k^n
        \end{bmatrix}
        \right\|_{\bf G}^2
        +\lambda\left|
        \begin{bmatrix}
            r^{n+1}\\
            r^n
        \end{bmatrix}
        \right|_{\bf G}^2
        +\frac{1}{2}\left|
        \begin{bmatrix}
            q^{n+1}\\
            q^n
        \end{bmatrix}
        \right|_{\bf G}^2
        +\frac{\tau}{2}\left\|
        \begin{bmatrix}
            {\bf u}^{n+1}\\
            {\bf u}^n
        \end{bmatrix}
        \right\|_{\bf G}^2
        +\frac{\theta^2\Delta t^2}{\tau(2\theta+1)}\left\|\nabla p^{n+1}\right\|^2.
    \end{equation*}
\end{theorem}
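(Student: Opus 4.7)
My plan is to mirror the proof of Theorem \ref{rrrmodel1_energy} step by step, keeping the same $G$-norm telescoping structure for the phase, scalar auxiliary, velocity, and $q$ variables, while exploiting three structural differences: the Darcy momentum equation \eqref{eqn:rrrmodel2_d} has no convection nor viscous Laplacian, so testing it with $\Delta t\,{\bf \tilde u}^{n+\theta}$ produces the algebraic dissipation $\alpha\Delta t\|\sqrt{\nu}\,{\bf \tilde u}^{n+\theta}\|^2$ directly; the $q$-equation \eqref{eqn:rrrmodel2_g} loses its trilinear ${\bf u}\cdot\nabla{\bf u}\cdot{\bf \tilde u}$ contribution; and the pressure correction \eqref{eqn:rrrmodel2_e} carries a factor $\tau$, which rescales every pressure-related estimate by $1/\tau$ and is precisely what produces the coefficient $\tfrac{\theta^2\Delta t^2}{\tau(2\theta+1)}\|\nabla p^{n+1}\|^2$ in $\tilde E^{n+1}$.

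For the phase-field block I would take $L^2$-inner products of \eqref{eqn:rrrmodel2_a} with $\Delta t\,\mu_k^{n+\theta}$ and of \eqref{eqn:rrrmodel2_b} with $\tfrac{2\theta+1}{2}\phi_k^{n+1}-2\theta\phi_k^n+\tfrac{2\theta-1}{2}\phi_k^{n-1}$, sum over $k$, and apply Lemma \ref{lemma_GF-norm} to telescope the $-\Delta\phi_k^{n+\theta}$ term into $\tfrac{\lambda}{2}\sum_k\bigl\|\bigl[\nabla\phi_k^{n+1};\nabla\phi_k^n\bigr]\bigr\|_{\bf G}^2$ plus a nonnegative jump. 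The constraint $\sum_k\phi_k=1$ kills the $\gamma^\ast r^{n+\theta}$ contribution, and the residual $\bar H_k^\ast r^{n+\theta}$ terms cancel against the product of \eqref{eqn:rrrmodel2_c} with $2\lambda\Delta t\,r^{n+\theta}$, where Remark \ref{remark_GF} delivers the $\lambda|[r^{n+1};r^n]|_{\bf G}^2$ telescoping plus a nonnegative jump. For the flow block I would test \eqref{eqn:rrrmodel2_d} with $\Delta t\,{\bf \tilde u}^{n+\theta}$ and multiply \eqref{eqn:rrrmodel2_g} by $\Delta t\,q^{n+\theta}$; the two zero-energy-contribution pairs $(q^{n+\theta}\nabla\cdot({\bf u}^\ast\phi_k^\ast),\mu_k^{n+\theta})$ and $(q^{n+\theta}\phi_k^\ast\nabla\mu_k^\ast,{\bf \tilde u}^{n+\theta})$ then cancel across the $\phi_k$-, ${\bf u}$-, and $q$-tests, and the BDF2-type term $\tau(\tfrac{2\theta+1}{2}{\bf \tilde u}^{n+1}-2\theta{\bf u}^n+\tfrac{2\theta-1}{2}{\bf u}^{n-1},{\bf \tilde u}^{n+\theta})$ is handled exactly as in \eqref{eqn:rrrmodel1_d2} by splitting ${\bf \tilde u}^{n+\theta}=\theta{\bf u}^{n+1}+(1-\theta){\bf u}^n+\theta({\bf \tilde u}^{n+1}-{\bf u}^{n+1})$ and invoking Lemma \ref{lemma_GF-norm} to obtain $\tfrac{\tau}{2}\|[{\bf u}^{n+1};{\bf u}^n]\|_{\bf G}^2$ telescoping plus the correction $\tfrac{\tau(2\theta+1)\theta}{2}(\|{\bf \tilde u}^{n+1}\|^2-\|{\bf u}^{n+1}\|^2)$.

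The delicate step, and the only real obstacle, is the $\tau$-weighted pressure splitting. Rewriting \eqref{eqn:rrrmodel2_e} as $\tfrac{\tau(2\theta+1)}{2\Delta t}({\bf u}^{n+1}-{\bf \tilde u}^{n+1})=-\theta(\nabla p^{n+1}-\nabla p^n)$ yields $\|{\bf u}^{n+1}-{\bf \tilde u}^{n+1}\|^2=\tfrac{4\Delta t^2\theta^2}{\tau^2(2\theta+1)^2}\|\nabla p^{n+1}-\nabla p^n\|^2$, and testing the projection step against the divergence-free field ${\bf u}^{n+1}$ together with $2(a-b)a=a^2-b^2+(a-b)^2$ rewrites the correction $\tfrac{\tau(2\theta+1)\theta}{2}(\|{\bf \tilde u}^{n+1}\|^2-\|{\bf u}^{n+1}\|^2)$ as a positive multiple of $\|\nabla p^{n+1}-\nabla p^n\|^2$. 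Casting \eqref{eqn:rrrmodel2_e} in the projection form $\tfrac{\tau(2\theta+1)}{2\theta\Delta t}(\theta{\bf u}^{n+1}+(1-\theta){\bf u}^n-{\bf \tilde u}^{n+\theta})+\theta(\nabla p^{n+1}-\nabla p^n)=0$ and testing against $\nabla p^n$, where divergence-freeness of ${\bf u}^{n+1},{\bf u}^n$ kills the leading term and $2(a-b)b=a^2-b^2-(a-b)^2$ handles the right-hand side, produces $\Delta t(\nabla p^n,{\bf \tilde u}^{n+\theta})=\tfrac{\theta^2\Delta t^2}{\tau(2\theta+1)}(\|\nabla p^{n+1}\|^2-\|\nabla p^n\|^2-\|\nabla p^{n+1}-\nabla p^n\|^2)$, which is exactly the pressure telescoping term in $\tilde E^{n+1}$. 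Summing all contributions, the genuine dissipation $\alpha\Delta t\|\sqrt{\nu}{\bf \tilde u}^{n+\theta}\|^2$ and $M\Delta t\sum_k\|\mu_k^{n+\theta}\|^2$ join the Lemma \ref{lemma_GF-norm}/Remark \ref{remark_GF} jump remainders on the right-hand side, and the bookkeeping obstacle is merely to verify that the leftover $\|\nabla p^{n+1}-\nabla p^n\|^2$ terms combine with a nonnegative sign throughout $\theta\in[\tfrac12,1]$, yielding $\tilde E^{n+1}\leq\tilde E^n$.
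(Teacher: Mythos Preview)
Your proposal is correct and follows essentially the same route as the paper's proof: the paper also mirrors the argument of Theorem \ref{rrrmodel1_energy} line by line, testing \eqref{eqn:rrrmodel2_a}--\eqref{eqn:rrrmodel2_b} and \eqref{eqn:rrrmodel2_d} with the same multipliers, invoking Lemma \ref{lemma_GF-norm} and Remark \ref{remark_GF} for the $G$-norm telescoping, reusing the velocity identity \eqref{eqn:rrrmodel1_d2}, and deriving the $\tau$-rescaled pressure identities $\|{\bf u}^{n+1}-{\bf \tilde u}^{n+1}\|^2=\tfrac{4\Delta t^2\theta^2}{\tau^2(2\theta+1)^2}\|\nabla p^{n+1}-\nabla p^n\|^2$ and $\Delta t(\nabla p^n,{\bf \tilde u}^{n+\theta})=\tfrac{\theta^2\Delta t^2}{\tau(2\theta+1)}(\|\nabla p^{n+1}\|^2-\|\nabla p^n\|^2-\|\nabla p^{n+1}-\nabla p^n\|^2)$ exactly as you outline. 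The final combination yields the same $-\tfrac{\theta^2\Delta t^2}{\tau(2\theta+1)}(2\theta-1)\|\nabla p^{n+1}-\nabla p^n\|^2$ remainder, nonnegative for $\theta\in[\tfrac12,1]$.
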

\begin{proof}
    By taking the $L^2$-inner products of \eqref{eqn:rrrmodel2_a} and \eqref{eqn:rrrmodel2_b} with $\Delta t\mu _k^{n+\theta}$ and $\frac{2\theta+1}{2}\phi_k^{n+1}-2\theta\phi_k^n+\frac{2\theta-1}{2}\phi_k^{n-1}$, respectively, we have
    \begin{align}
    \label{eqn:rrrmodel2_a1}
        \left(
        \frac{2\theta+1}{2}\phi_k^{n+1}-2\theta\phi_k^n+\frac{2\theta-1}{2}\phi_k^{n-1},\mu_k^{n+\theta}
        \right )
       +\Delta t \left(q^{n+\theta}\nabla\cdot({\bf u}^{*}\phi_k^{*}),\mu_k^{n+\theta}\right)+M\Delta t \left \|\mu_k^{n+\theta}\right \|^2=0,
    \end{align}
    and
    \begin{align}
    \label{eqn:rrrmodel2_b1}
        &\left(\mu_k^{n+\theta},\frac{2\theta+1}{2}\phi_k^{n+1}-2\theta\phi_k^n+\frac{2\theta-1}{2}\phi_k^{n-1}\right)
        \nonumber\\
        =&\lambda\left(
        \frac{1}{2}
        \left\|
        \begin{bmatrix}
            \nabla\phi_k^{n+1}\\
            \nabla\phi_k^n
        \end{bmatrix}
        \right\|_{\bf G}^2
        -\frac{1}{2}
        \left\|
        \begin{bmatrix}
            \nabla\phi_k^n\\
            \nabla\phi_k^{n-1}
        \end{bmatrix}
        \right\|_{\bf G}^2
        +\frac{\theta(2\theta-1)}{4}\left\|\nabla\phi_k^{n+1}-2\nabla\phi_k^n+\nabla\phi_k^{n-1}\right\|^2\right)
        \nonumber\\  &+\lambda\left(\bar{H}_k^{*}r^{n+\theta},\frac{2\theta+1}{2}\phi_k^{n+1}-2\theta\phi_k^n+\frac{2\theta-1}{2}\phi_k^{n-1}\right)
        \nonumber\\
        &+\lambda\left(\gamma^{*}r^{n+\theta},\frac{2\theta+1}{2}\phi_k^{n+1}-2\theta\phi_k^n+\frac{2\theta-1}{2}\phi_k^{n-1}\right).
    \end{align}
    One can  multiply \eqref{eqn:rrrmodel2_c} with $2\lambda\Delta t r^{n+\theta}$ to arrive at
    \begin{align}
        &2\lambda\left(
        \frac{1}{2}
        \left|
        \begin{bmatrix}
            r^{n+1}\\
            r^n
        \end{bmatrix}
        \right|_{\bf G}^2
        -\frac{1}{2}
        \left|
        \begin{bmatrix}
            r^n\\
            r^{n-1}
        \end{bmatrix}
        \right|_{\bf G}^2
        +\frac{\theta(2\theta-1)}{4}\left|r^{n+1}-2r^n+r^{n-1}\right|^2
        \right)
        \nonumber\\
        &=\lambda\sum_{k=1}^{N}\int_\Omega \bar{H}_k^{*}r^{n+\theta}\left(\frac{2\theta+1}{2}\phi_k^{n+1}-2\theta\phi_k^n+\frac{2\theta-1}{2}\phi_k^{n-1}\right)d{\bf x}.
        \label{eqn:rrrmodel2_c1}
    \end{align}
    By computing the $L^2$-inner product of \eqref{eqn:rrrmodel2_d} with $\Delta t{\bf\tilde u}^{n+\theta}$, we obtain
    \begin{align}
        &\tau\left(\frac{2\theta+1}{2}{\bf \tilde u}^{n+1}-2\theta {\bf u}^n+\frac{2\theta-1}{2}{\bf u}^{n-1},{\bf \tilde u}^{n+\theta}\right)
        +\alpha \Delta t\left\|\sqrt{\nu}{\bf \tilde u}^{n+\theta}\right\|^2 \nonumber\\
        &+\Delta t(\nabla p^n,{\bf \tilde u}^{n+\theta})+\Delta t(q^{n+\theta}\sum_{k=1}^{N}\phi_k^{*}\nabla \mu _k^{*},{\bf \tilde u}^{n+\theta})=0.
        \label{eqn:rrrmodel2_d1}
    \end{align}
    From \eqref{eqn:rrrmodel2_e}-\eqref{eqn:rrrmodel2_f}, we can also derive \eqref{eqn:rrrmodel1_d2} and the following equations by the similar process in Theorem \ref{rrrmodel1_energy}
    \begin{align}
        &\left\|{\bf u}^{n+1}\right\|^2-\left\|{\bf \tilde u}^{n+1}\right\|^2
        =-\left\|{\bf u}^{n+1}-{\bf \tilde u}^{n+1}\right\|^2
        =-\frac{4\Delta t^2\theta^2}{\tau^2(2\theta+1)^2}\left\|\nabla p^{n+1}-\nabla p^n\right\|^2,
        \label{eqn:rrrmodel2_e1}\\
        &\Delta t(\nabla p^n,{\bf \tilde u}^{n+\theta})=\frac{\theta^2\Delta t^2}{\tau(2\theta+1)}\left(\left\|\nabla p^{n+1}\right\|^2-\left\|\nabla p^n\right\|^2-\left\|\nabla p^{n+1}-\nabla p^n\right\|^2\right).
        \label{eqn:rrrmodel2_e2}
    \end{align}
    Multiplying \eqref{eqn:rrrmodel2_g} with $\Delta t q^{n+\theta}$, we have
    \begin{align}
        &\frac{1}{2}
        \left|
        \begin{bmatrix}
            q^{n+1}\\
            q^n
        \end{bmatrix}
        \right|_{\bf G}^2
        -\frac{1}{2}
        \left|
        \begin{bmatrix}
            q^n\\
            q^{n-1}
        \end{bmatrix}
        \right|_{\bf G}^2
        +\frac{\theta(2\theta-1)}{4}\left|q^{n+1}-2q^n+q^{n-1}\right|^2
        \nonumber\\
        =&\Delta t
        \sum_{k=1}^{N}\int_\Omega q^{n+\theta}\nabla \cdot ({\bf u}^{*}\phi_k^{*})\mu_k^{n+\theta}d{\bf x}
        +\Delta t
        \sum_{k=1}^{N}\int_\Omega q^{n+\theta}\phi _k^{*} \nabla \mu_k^{*}\cdot {\bf \tilde u}^{n+\theta}d{\bf x}.
        \label{eqn:rrrmodel2_g1}
    \end{align}
    By combining \eqref{eqn:rrrmodel2_a1}-\eqref{eqn:rrrmodel2_g1} with \eqref{eqn:rrrmodel1_d2}, we deduce
    \begin{align}
        &\frac{\lambda}{2}
        \sum_{k=1}^{N}\left(
        \left\|
        \begin{bmatrix}
            \nabla\phi_k^{n+1}\\
            \nabla\phi_k^n
        \end{bmatrix}
        \right\|_{\bf G}^2
        -\left\|
        \begin{bmatrix}
            \nabla\phi_k^n\\
            \nabla\phi_k^{n-1}
        \end{bmatrix}
        \right\|_{\bf G}^2
        \right)
        +\lambda\left(
        \left|
        \begin{bmatrix}
            r^{n+1}\\
            r^n
        \end{bmatrix}
        \right|_{\bf G}^2
        -\left|
        \begin{bmatrix}
            r^n\\
            r^{n-1}
        \end{bmatrix}
        \right|_{\bf G}^2
        \right)
        +\frac{1}{2}
        \left(
        \left|
        \begin{bmatrix}
            q^{n+1}\\
            q^n
        \end{bmatrix}
        \right|_{\bf G}^2
        -\left|
        \begin{bmatrix}
            q^n\\
            q^{n-1}
        \end{bmatrix}
        \right|_{\bf G}^2
        \right)
        \nonumber\\
        &+\frac{\tau}{2}
        \left(
        \left\|
        \begin{bmatrix}
            {\bf u}^{n+1}\\
            {\bf u}^n
        \end{bmatrix}
        \right\|_{\bf G}^2
        -\left\|
        \begin{bmatrix}
            {\bf u}^n\\
            {\bf u}^{n-1}
        \end{bmatrix}
        \right\|_{\bf G}^2
        \right)
        +\frac{\theta^2\Delta t^2}{\tau(2\theta+1)}\left(\left\|\nabla p^{n+1}\right\|^2-\left\|\nabla p^n\right\|^2\right)
        \nonumber\\
        &=-M\Delta t \sum_{k=1}^{N}\left \|\mu_k^{n+\theta}\right \|^2
        -\frac{\lambda\theta(2\theta-1)}{4}\sum_{k=1}^{N}\left\|\nabla\phi_k^{n+1}-2\nabla\phi_k^n+\nabla\phi_k^{n-1}\right\|^2
        -\frac{\lambda\theta(2\theta-1)}{2}\left|r^{n+1}-2r^n+r^{n-1}\right|^2
        \nonumber\\
        &-\frac{\tau\theta(2\theta-1)}{4}\left\|{\bf  u}^{n+1}-2{\bf u}^n+{\bf u}^{n-1}\right\|^2
        -\frac{\theta^2\Delta t^2}{\tau(2\theta+1)}(2\theta-1)\left\|\nabla p^{n+1}-\nabla p^n\right\|^2
        \nonumber\\
        &-\alpha\Delta t
        \left\|\sqrt{\nu}{\bf \tilde u}^{n+\theta}\right\|^2
        -\frac{\theta(2\theta-1)}{4}\left|q^{n+1}-2q^n+q^{n-1}\right|^2\leq 0.
        \nonumber
    \end{align}
    Hence, we conclude the proof.
\end{proof}

\section{Numerical experiments}
\label{sec-4}
In this section, we implement the proposed numerical schemes ($\theta$-SAV) for simulating the 2-component and 3-component NS-CAC/D-CAC models. Numerical tests demonstrate the high performance of the schemes, including the accuracy, energy dissipation and mass conservation.
In addition, we exhibit the temporal evolutions of the phase separation for the proposed numerical schemes. Let ${\bf u}=(u,v)$, and the Fourier spectral method \cite{chen2018enriched,Mao2017SIAM} is used in space.\par

\subsection{Convergence tests} \label{sec-4.1}

In this subsection, we present some numerical tests to check the temporal accuracy of the $\theta$-SAV schemes for the 2-component and 3-component NS-CAC/D-CAC models.
For the two models, we set the parameters as $\alpha = 1000$, $\tau=1$, $\nu = 1$, $\lambda = 0.01$, $\epsilon = 0.05$, $M = 10$, $C = 10$.
Then, we perform the numerical simulations using different time steps, i.e.
$\Delta t$ = 1e-3, 5e-4, 2.5e-4, 1.25e-4, 6.25e-5, 3.125e-5, and the spatial mesh size $128\times 128$.
\par
To test the convergence and accuracy of the schemes, we first consider the two models with external forcing terms. For 2-component models, the exact solution is set to be
\begin{align}
    \left \{
    \begin{aligned}
        &\phi=0.5+0.5\cos(t)\sin(\pi x)\sin(\pi y), \\
        &u = \pi \sin(t)\sin(2\pi y)\sin^2(\pi x),\\
        &v = -\pi \sin(t)\sin(2\pi x)\sin^2(\pi y),\\
        &p = \sin(t)\cos(\pi x)\sin(\pi y),
    \end{aligned}
    \right.
    \nonumber
\end{align}
and for 3-component models, the exact solution is set to be
\begin{align}
    \left \{
    \begin{aligned}
        &\phi_1=0.3+0.01\cos(t)\sin(\pi x)\sin(\pi y), \\
        &\phi_2=0.3+0.02\cos(t)\sin(\pi x)\sin(\pi y), \\
        &\phi_3=1-\phi_1-\phi_2, \\
        &u = \pi \sin(t)\sin(2\pi y)\sin^2(\pi x),\\
        &v = -\pi \sin(t)\sin(2\pi x)\sin^2(\pi y),\\
        &p = \sin(t)\cos(\pi x)\sin(\pi y).
    \end{aligned}
    \right.
    \nonumber
\end{align}
In Figs. \ref{figure:error_1_NS_AC}- \ref{figure:error_3_D_CAC}, we show the $L^{\infty}$ error and the convergence rates at $t=0.1$ for the $\theta$-SAV schemes with $\theta =0.5$, $0.75$, $1$.
The results indicate that the temporal accuracy of the $\theta$-SAV scheme is always second-order for arbitrary $\theta\in [1/2,1]$.\par

\begin{figure}[!htp]
\centering
\includegraphics[width=0.32\textwidth]{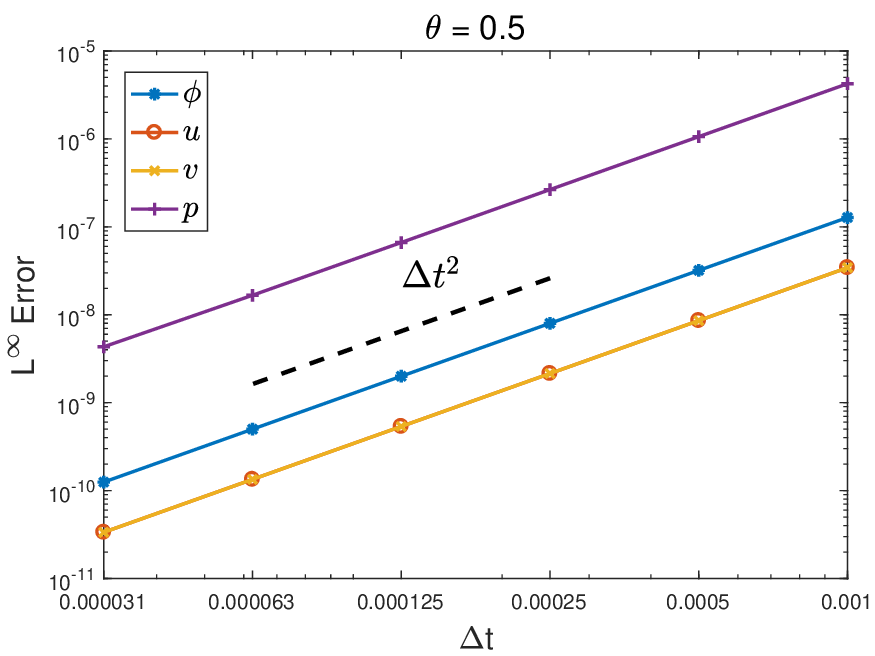}
\includegraphics[width=0.32\textwidth]{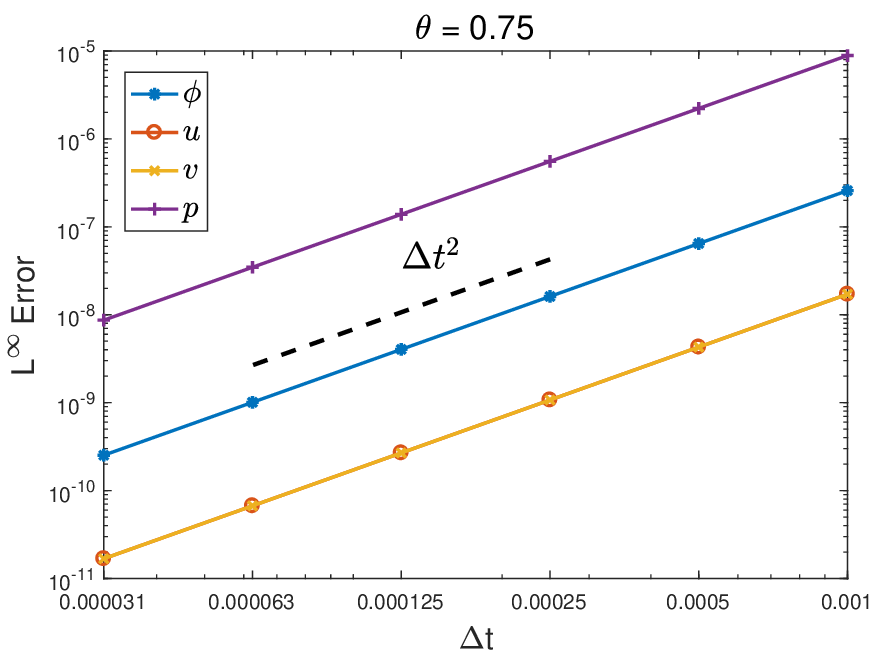}
\includegraphics[width=0.32\textwidth]{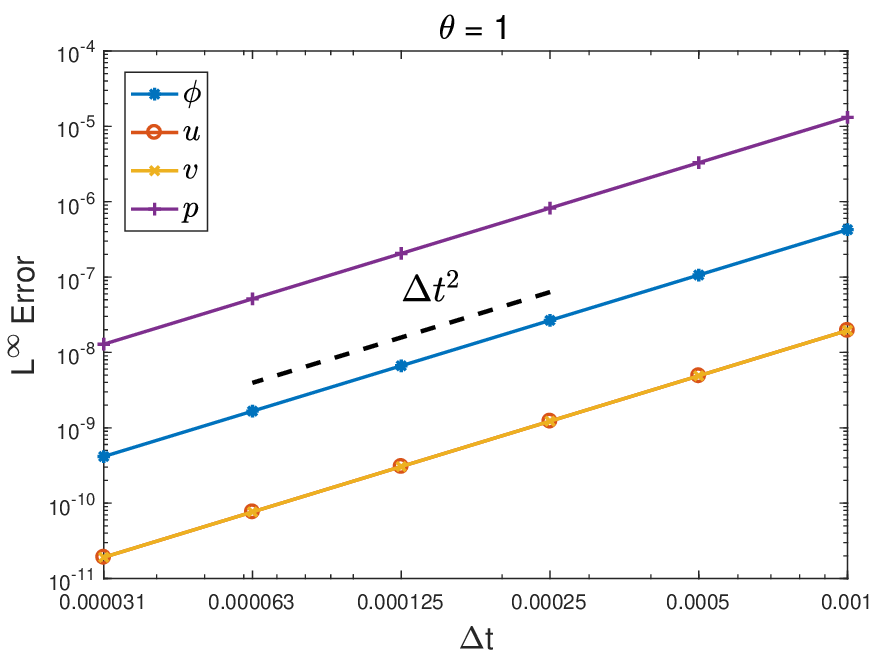}
\caption{Convergence tests of the 2-component NS-CAC model with external forcing terms}
\label{figure:error_1_NS_AC}
\end{figure}

\begin{figure}[!htp]
\centering
\includegraphics[width=0.32\textwidth]{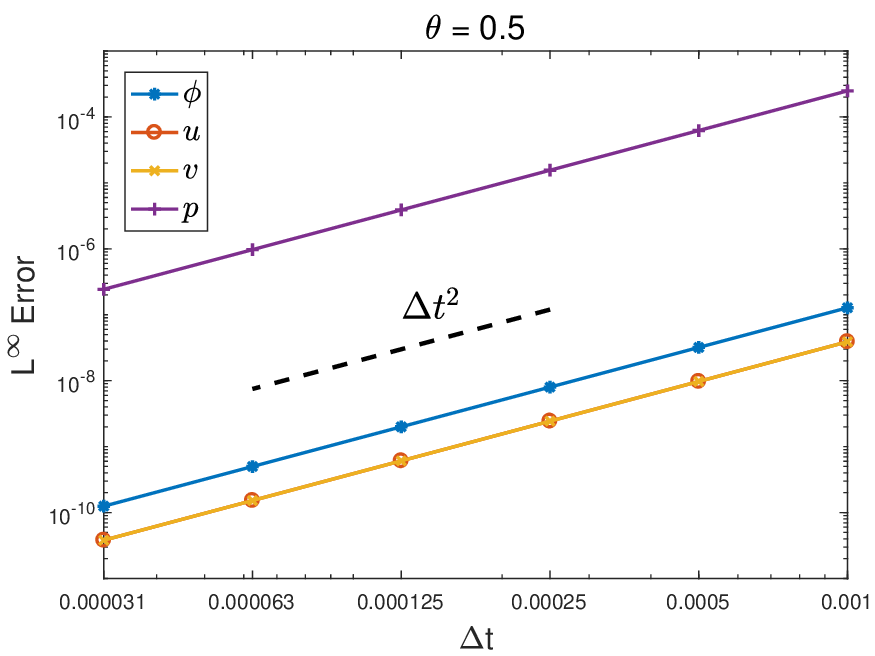}
\includegraphics[width=0.32\textwidth]{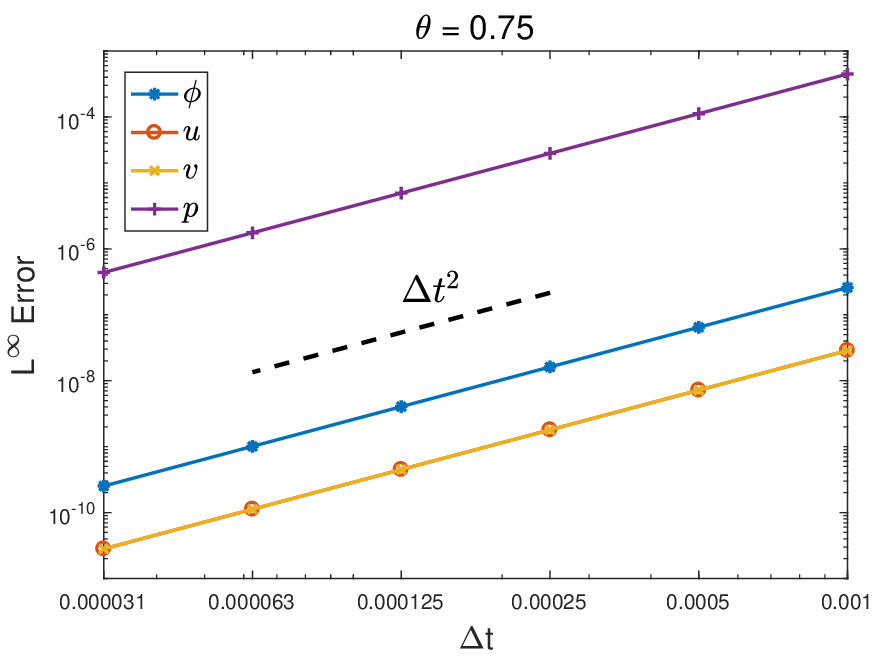}
\includegraphics[width=0.32\textwidth]{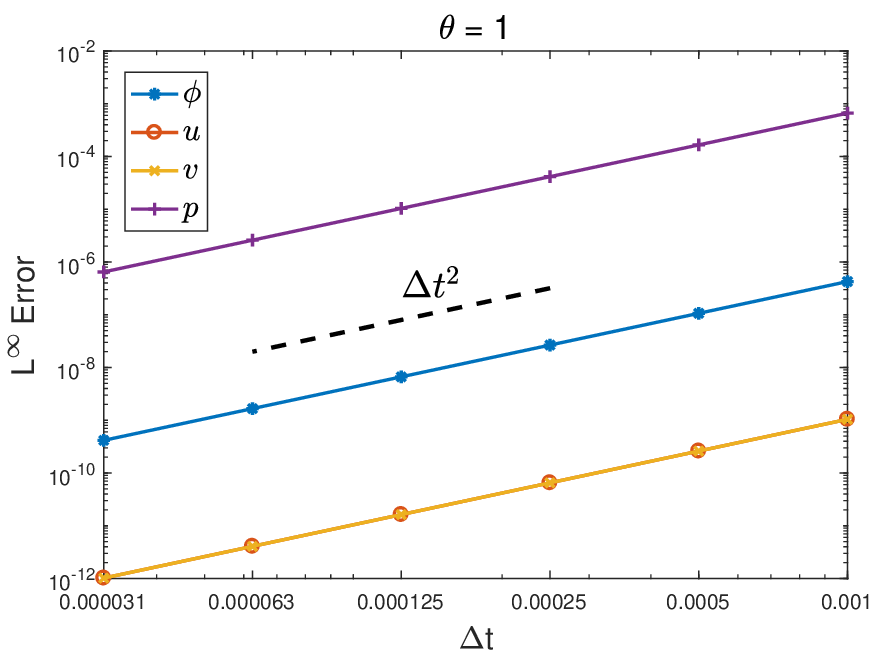}
\caption{Convergence tests of the 2-component D-CAC model with external forcing terms}
\label{figure:error_1_D_AC}
\end{figure}

\begin{figure}[!htp]
\centering
\includegraphics[width=0.32\textwidth]{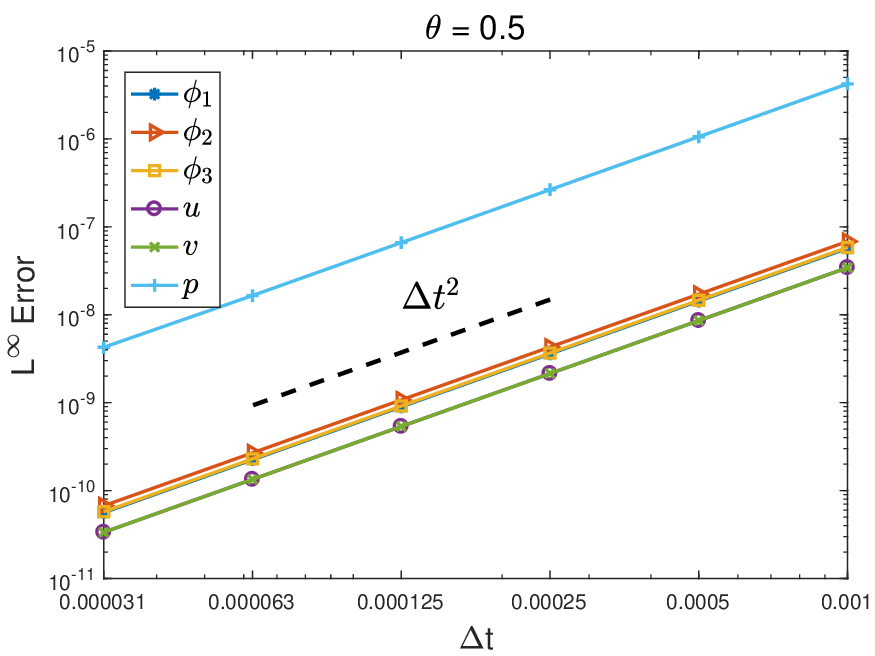}
\includegraphics[width=0.32\textwidth]{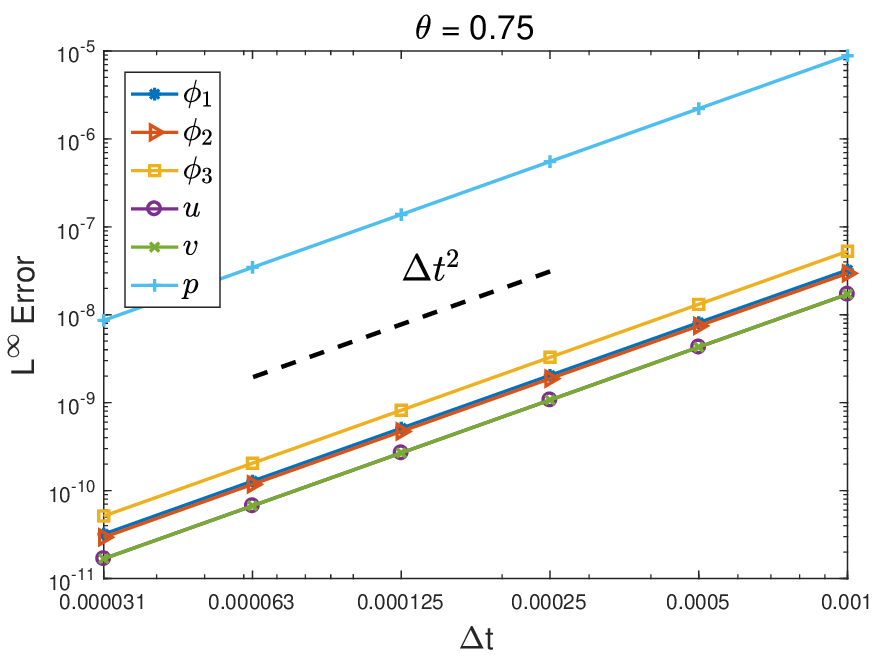}
\includegraphics[width=0.32\textwidth]{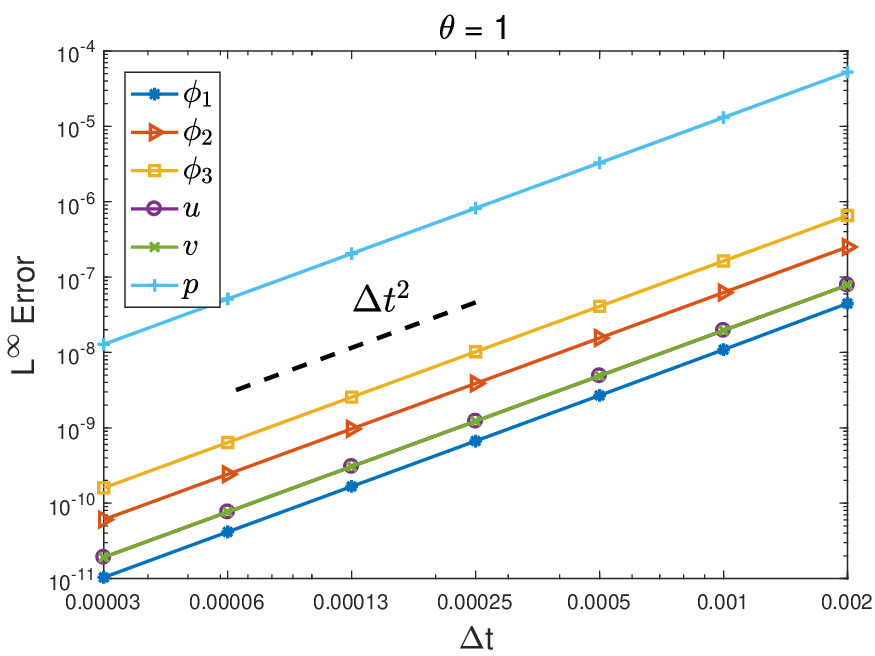}
\caption{Convergence tests of the 3-component NS-CAC model with external forcing terms}
\label{figure:error_3_NS_CAC}
\end{figure}

\begin{figure}[!htp]
\centering
\includegraphics[width=0.32\textwidth]{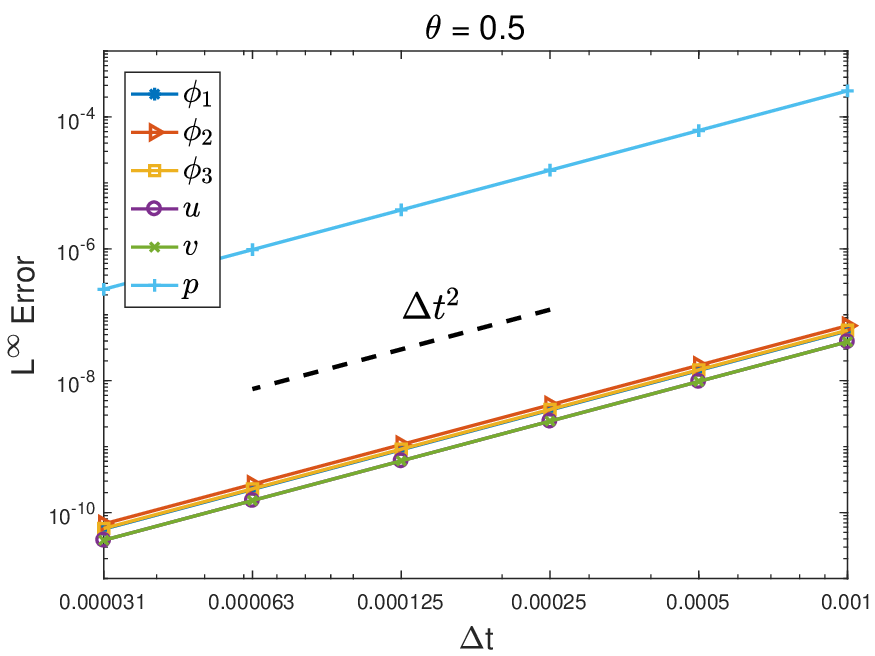}
\includegraphics[width=0.32\textwidth]{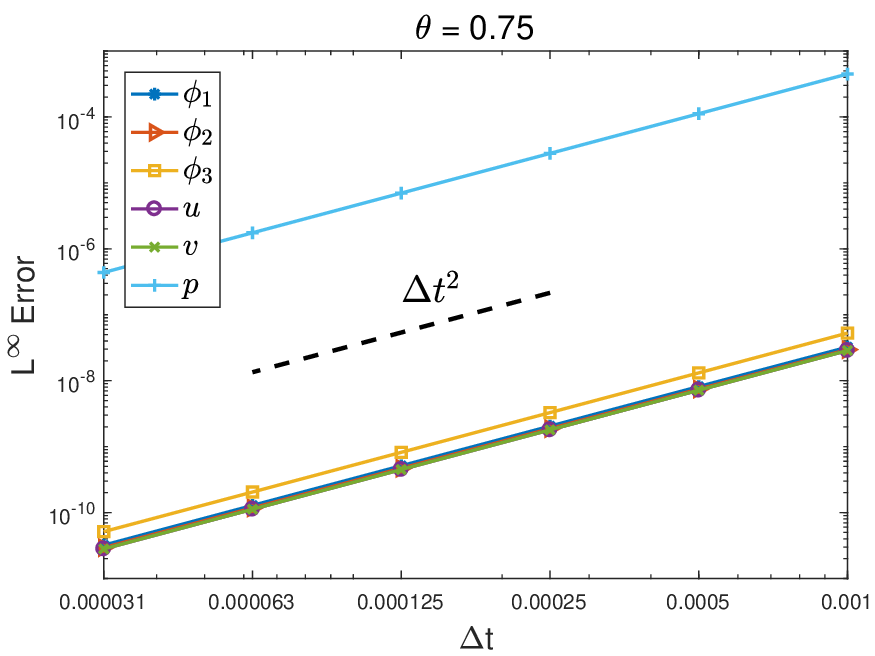}
\includegraphics[width=0.32\textwidth]{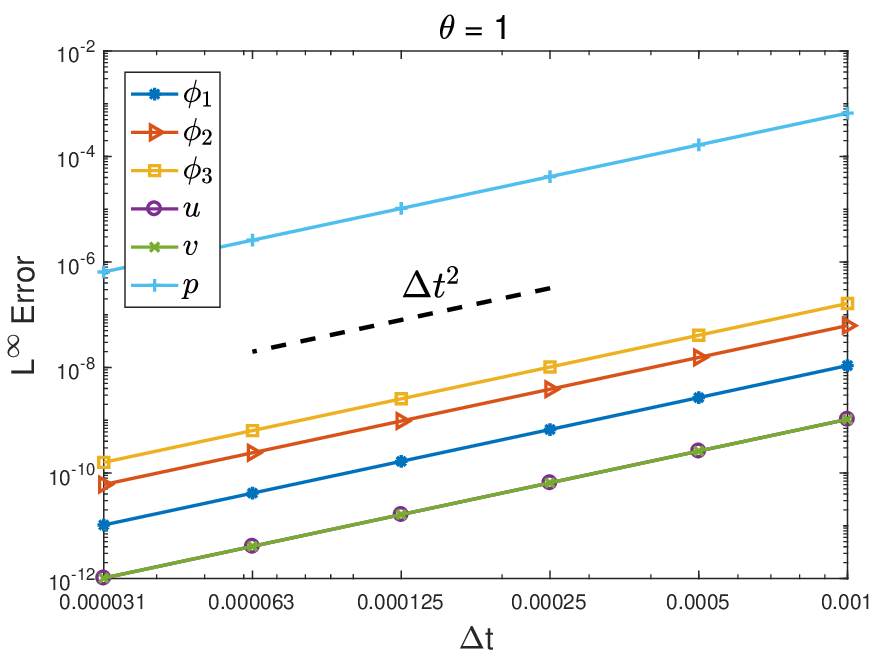}
\caption{Convergence tests of the 3-component D-CAC model with external forcing terms}
\label{figure:error_3_D_CAC}
\end{figure}

\subsection{Energy dissipation and mass conservation} \label{sec-4.2}
In this subsection, in order to verify the energy dissipation and mass conservation of the $\theta$-SAV schemes, we consider the computational domain $\Omega = [0,2]^2$. The spatial mesh size and the parameters are the same as ones in Subsection \ref{sec-4.1}.

\subsubsection{2-component models}
\label{sec-4.2.1}
For 2-component NS-CAC model and 2-component D-CAC model, we assume that the initial conditions are both
\begin{align}
    \left \{
    \begin{aligned}
        &\phi(x,y,0)= \mathrm{rand}(x,y), \\
        &u(x,y,0)=v(x,y,0)=p(x,y,0)=1,
    \end{aligned}
    \right.
    \nonumber
\end{align}
where the $\mathrm{rand}(x,y)$ is the random number between 0 and 1. In Fig. \ref{figure:Energy_1_NS_CAC}, we plot the modified energy until $t=20$ for the different values of $\theta$ with $\Delta t=0.005$, as well as energy evolution until $t=2$ for the different values of $\Delta t$ with $\theta=0.6$, respectively.   It shows that the modified energy of the $\theta$-SAV scheme is no-increasing for 2-component NS-CAC model.
 The relative error of  mass of $\phi$  is denoted as $\Delta M(t)=M(t)-M(0)$, where $M(t)$ is the mass of $\phi$ at time $t$ and $M(0)$ is the initial mass of $\phi$. We draw  $\Delta M(t)$  until $t=5$ using the same values of weight $\theta$ and time steps $\Delta t$ as above in Fig. \ref{figure:Mass_1_NS_CAC}. It illustrates that the $\theta$-SAV scheme is mass conservation  since the $\Delta M(t)$ arrives  at the machine precision. For 2-component D-CAC model, we also plot the similar time evolution of modified energy and relative error of mass in Figs. \ref{figure:Energy_1_D_CAC}-\ref{figure:Mass_1_D_CAC}, which display that the $\theta$-SAV scheme of 2-component D-CAC model also satisfies energy dissipation and mass conservation.


\begin{figure}[!htp]
\centering
\includegraphics[width=0.45\textwidth]{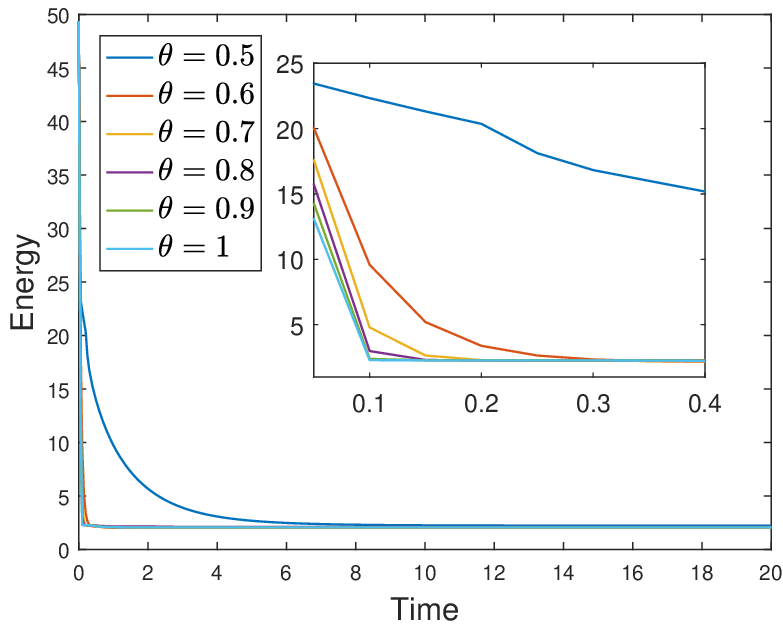}
\includegraphics[width=0.45\textwidth]{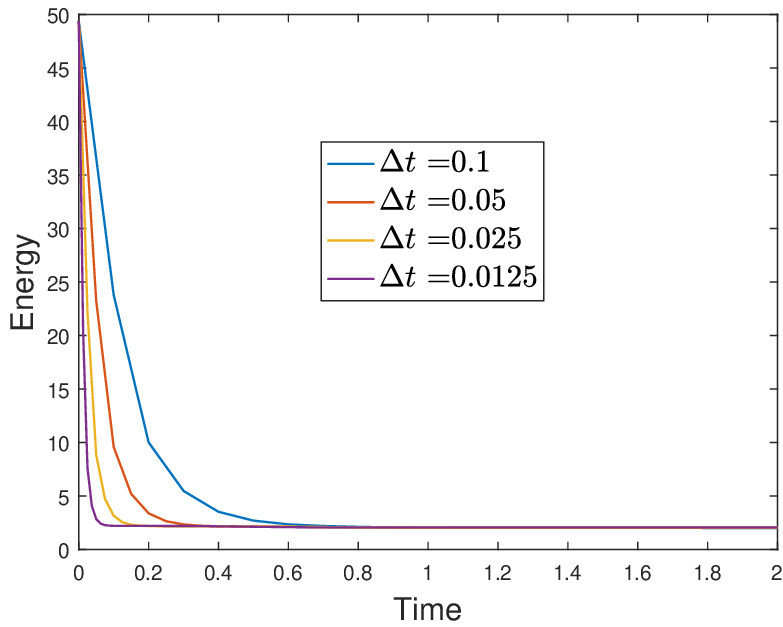}
\caption{The modified energy evolution of the 2-component NS-CAC model}
\label{figure:Energy_1_NS_CAC}
\end{figure}

\begin{figure}[!htp]
\centering
\includegraphics[width=0.45\textwidth]{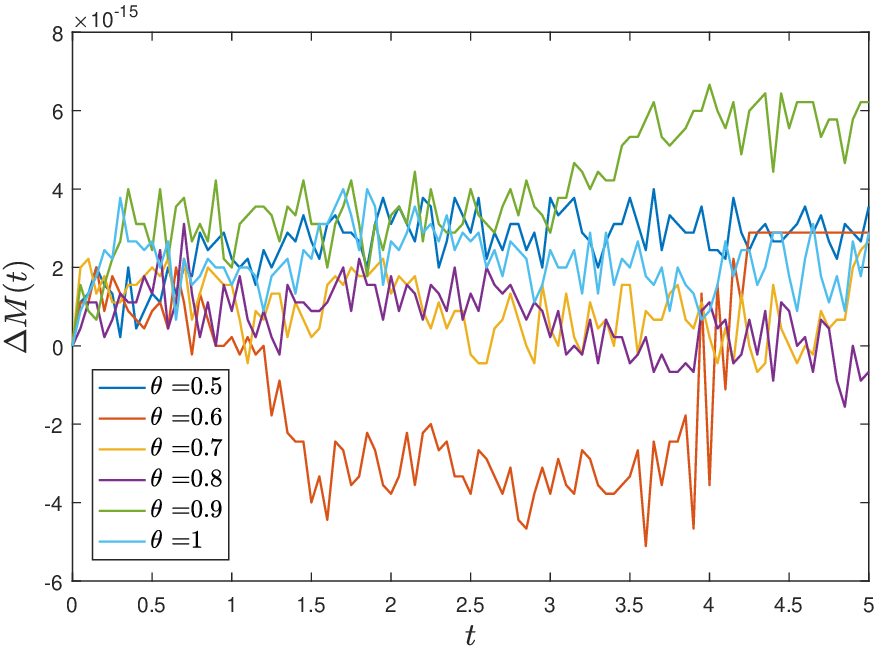}
\includegraphics[width=0.45\textwidth]{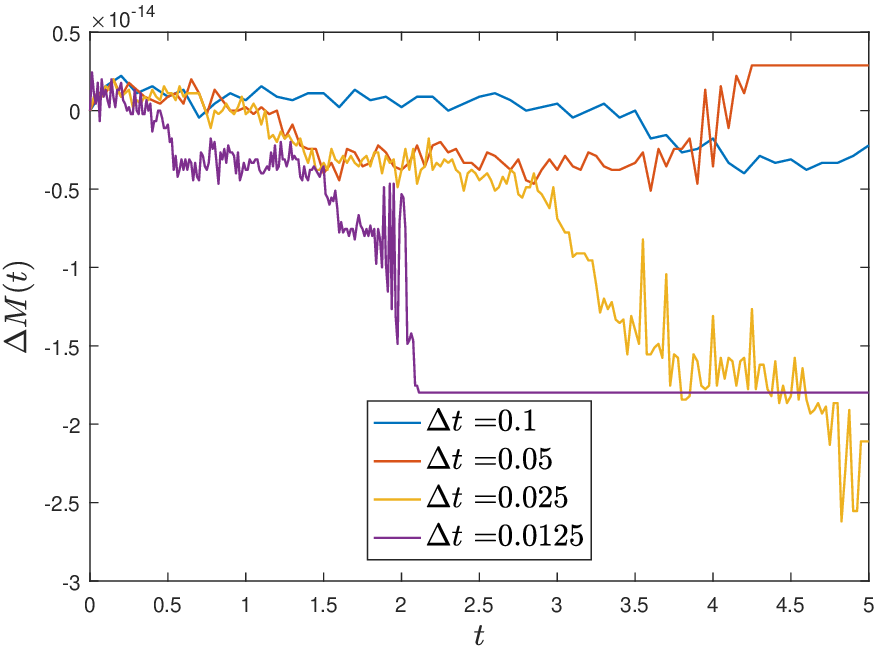}
\caption{The relative error of mass of the 2-component NS-CAC model}
\label{figure:Mass_1_NS_CAC}
\end{figure}

\begin{figure}[!htp]
\centering
\includegraphics[width=0.45\textwidth]{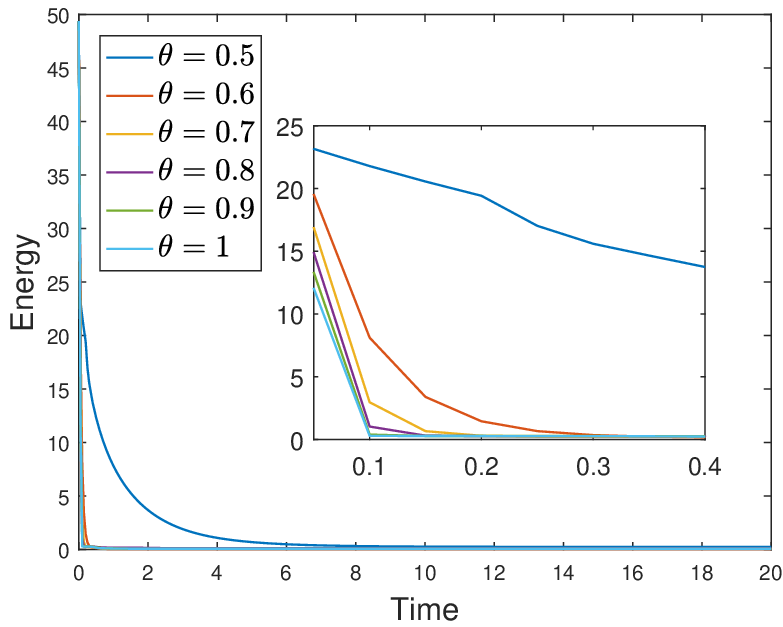}
\includegraphics[width=0.45\textwidth]{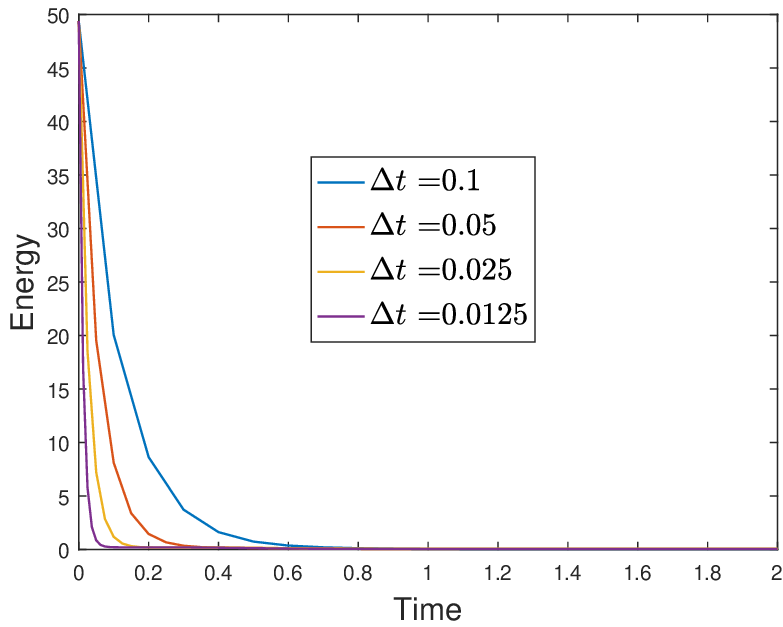}
\caption{The modified energy evolution of the 2-component D-CAC model}
\label{figure:Energy_1_D_CAC}
\end{figure}

\begin{figure}[!htp]
\centering
\includegraphics[width=0.45\textwidth]{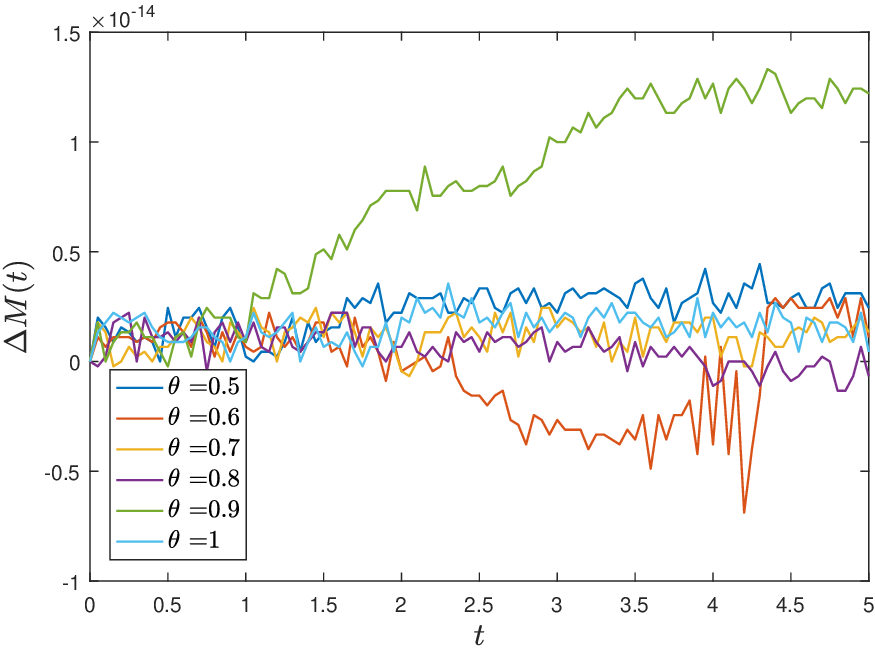}
\includegraphics[width=0.45\textwidth]{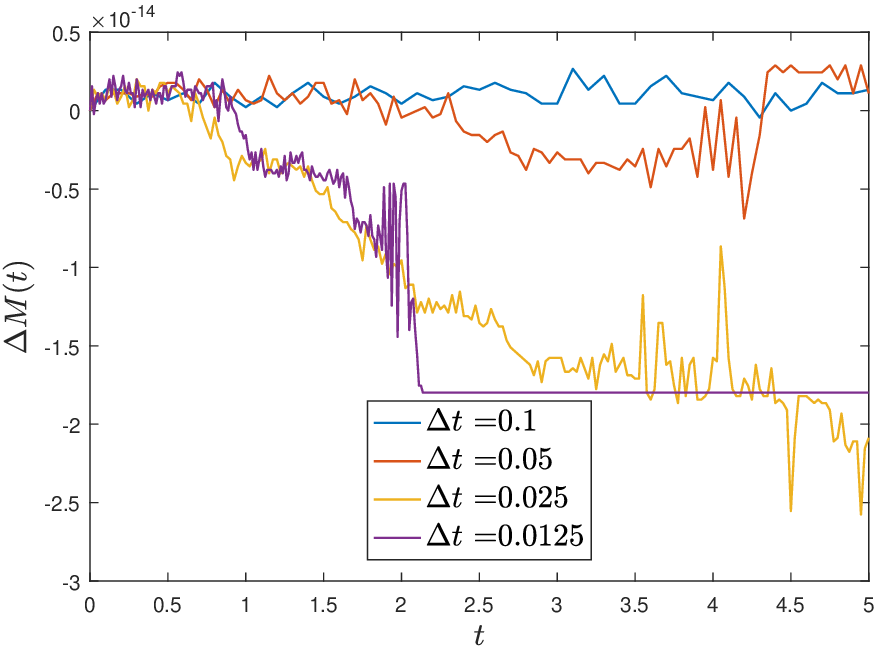}
\caption{The relative errors of mass of the 2-component D-CAC model}
\label{figure:Mass_1_D_CAC}
\end{figure}

\subsubsection{3-component models}
\label{sec-4.2.2}
For 3-component NS-CAC model and 3-component D-CAC model, we suppose that the initial conditions are both
\begin{align}
    \left \{
    \begin{aligned}
        &\phi_1(x,y,0)= \frac{1}{3}+0.01(2\mathrm{rand}(x,y)-1), \\
        &\phi_2(x,y,0)= \frac{1}{3}+0.01(2\mathrm{rand}(x,y)-1),\\
        &\phi_3(x,y,0)= 1 - \phi_1(x,y,0) - \phi_2(x,y,0),\\
        &u(x,y,0)=v(x,y,0)=p(x,y,0)=0.
    \end{aligned}
    \right.
    \nonumber
\end{align}
With the values of $\theta$ and $\Delta t$ as same as ones in Subsubsection \ref{sec-4.2.1}, Figs. \ref{figure:Energy_3_NS_CAC}-
\ref{figure:Mass_theta06_3_D_CAC} are drawn to exhibit the temporal evolution of two aspects: (i) the total modified energy; (ii) the relative error of the mass denoted as $\Delta M_k(t)$  of $\phi_k$ $(k=1,2,3)$. From Figs. \ref{figure:Energy_3_NS_CAC} and \ref{figure:Energy_3_D_CAC}, we can find that the energy dissipation is satisfied. Further, we observe the fact that mass conservation also is satisfied in Figs. \ref{figure:Mass_Delta005_3_NS_CAC}-\ref{figure:Mass_theta06_3_NS_CAC} and \ref{figure:Mass_Delta005_3_D_CAC}-\ref{figure:Mass_theta06_3_D_CAC}.
\par
To sum up, we verify the $\theta$-SAV schemes have the properties of energy dissipation and mass conservation for arbitrary $\theta\in [1/2,1]$, which can reflect the correctness of the previous theoretical results.




\begin{figure}[!htp]
\centering
\includegraphics[width=0.45\textwidth]{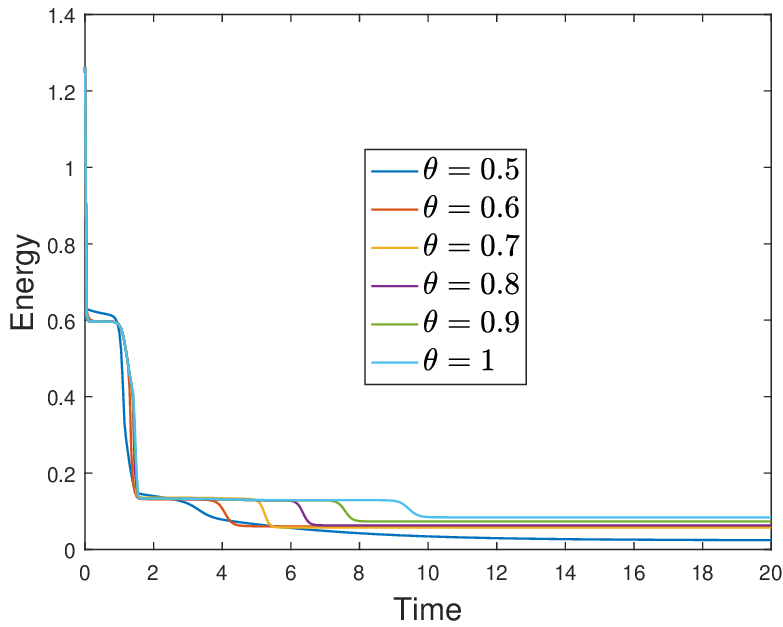}
\includegraphics[width=0.45\textwidth]{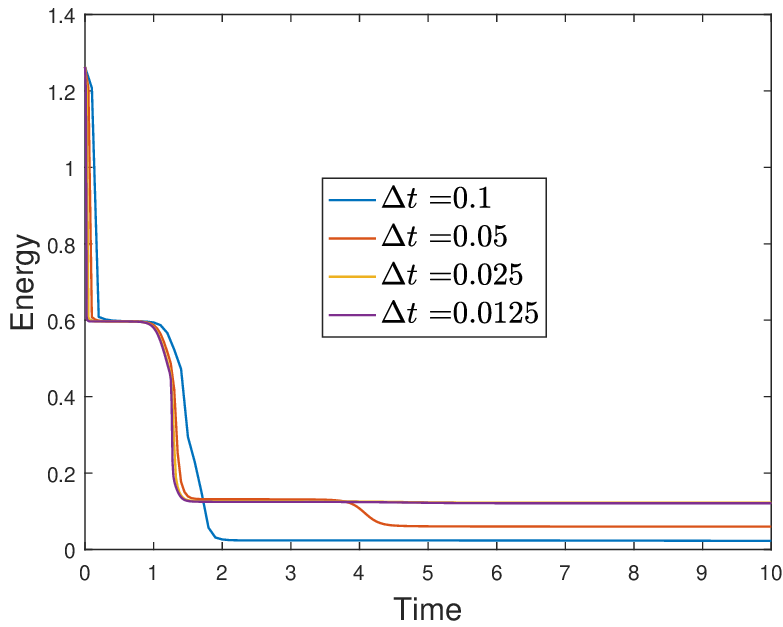}
\caption{The modified energy evolution of the 3-component NS-CAC model}
\label{figure:Energy_3_NS_CAC}
\end{figure}

\begin{figure}[!htp]
\centering
\includegraphics[width=0.32\textwidth]{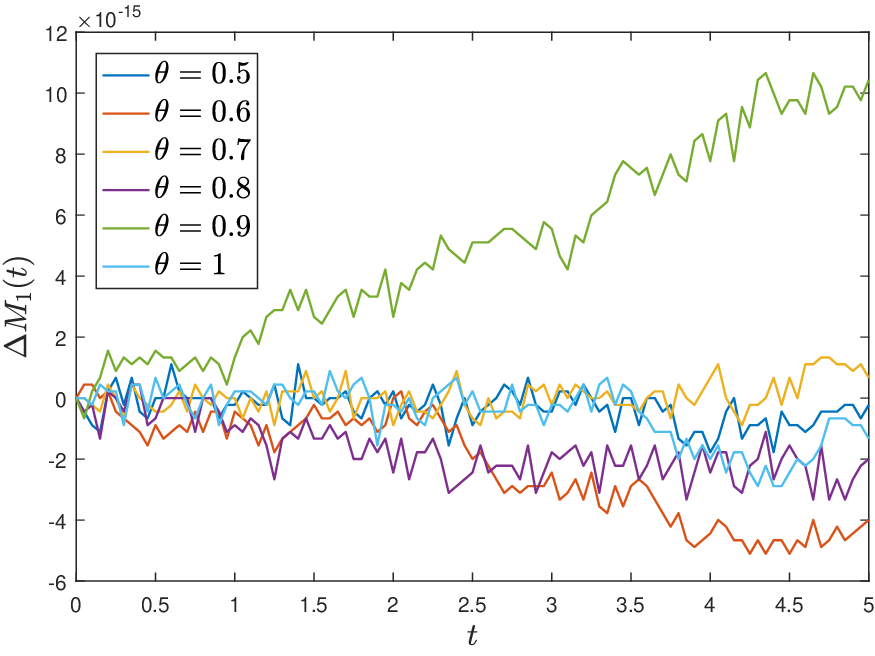}
\includegraphics[width=0.32\textwidth]{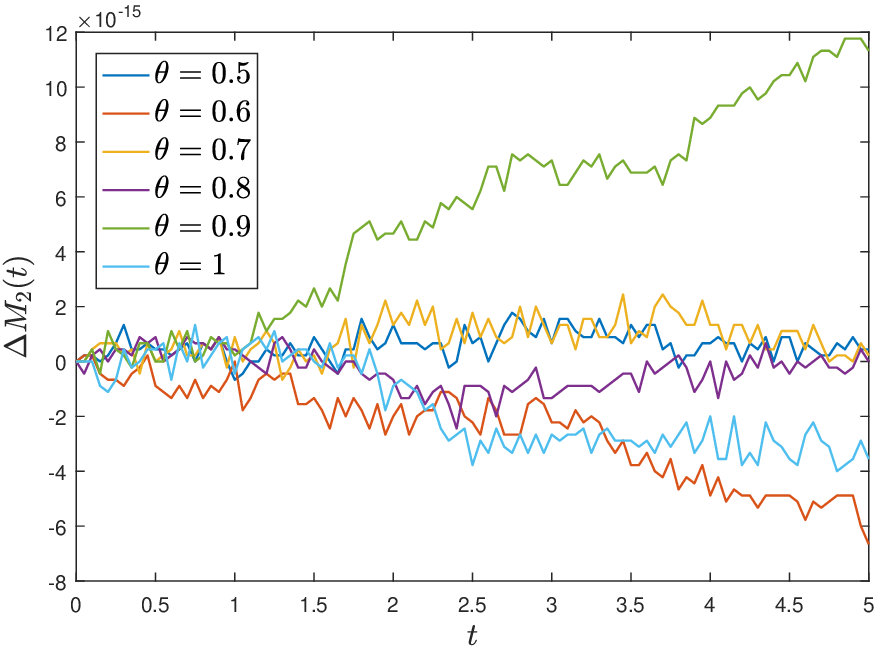}
\includegraphics[width=0.32\textwidth]{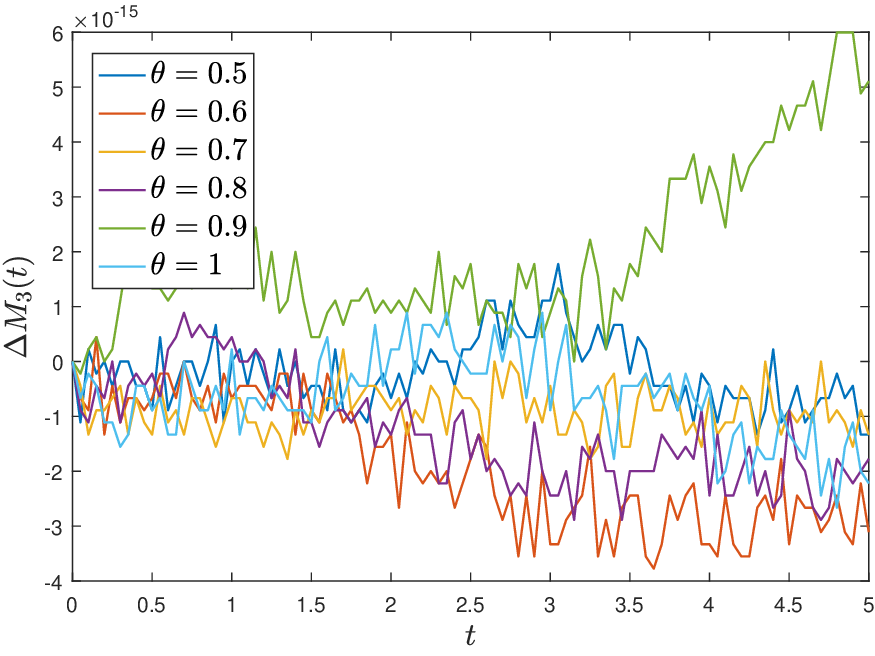}
\caption{The relative errors of mass of the 3-component NS-CAC model with $\Delta t = 0.05$}
\label{figure:Mass_Delta005_3_NS_CAC}
\end{figure}

\begin{figure}[!htp]
\centering
\includegraphics[width=0.32\textwidth]{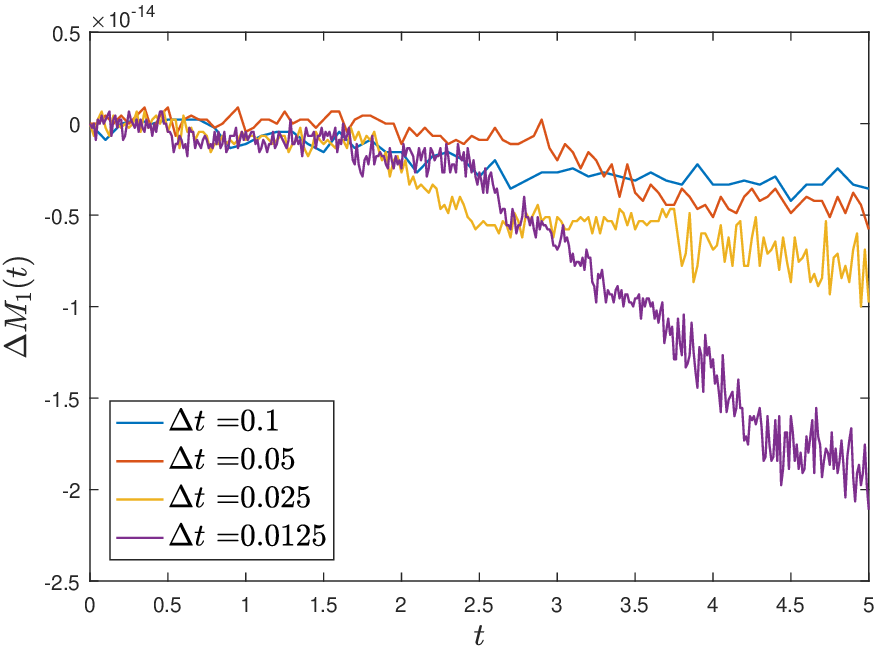}
\includegraphics[width=0.32\textwidth]{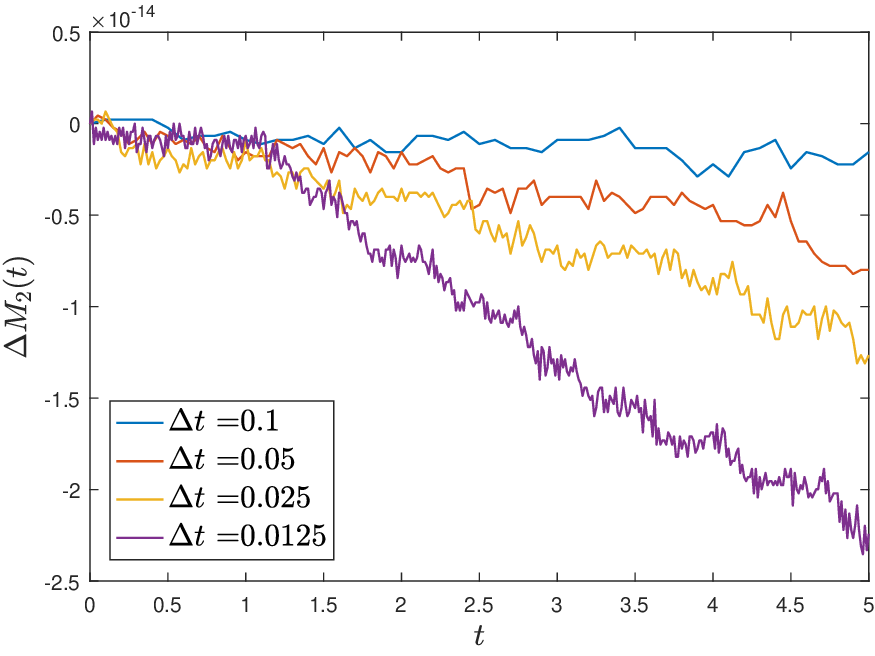}
\includegraphics[width=0.32\textwidth]{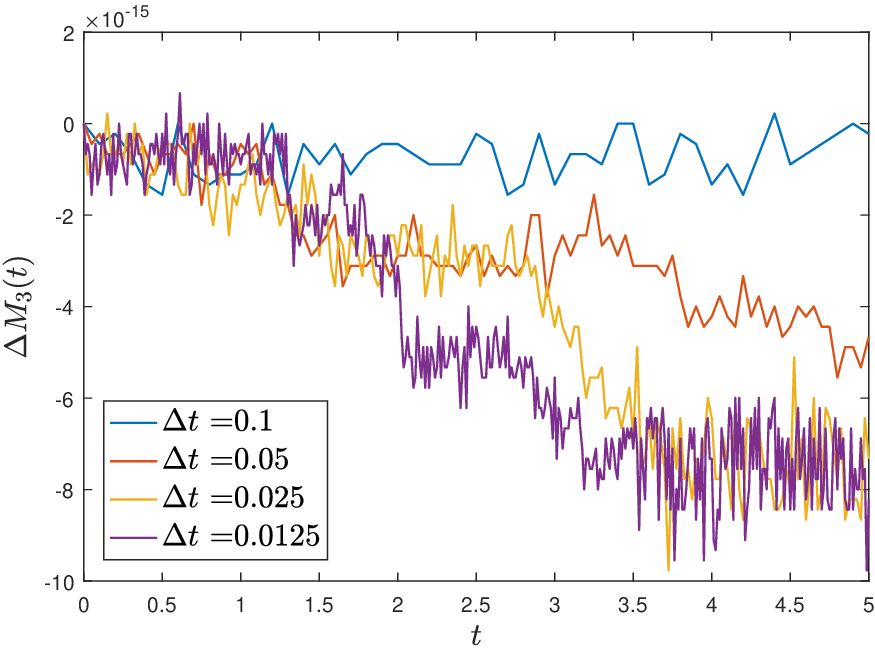}
\caption{The relative errors of mass of the 3-component NS-CAC model with $\theta = 0.6$}
\label{figure:Mass_theta06_3_NS_CAC}
\end{figure}

\begin{figure}[!htp]
\centering
\includegraphics[width=0.45\textwidth]{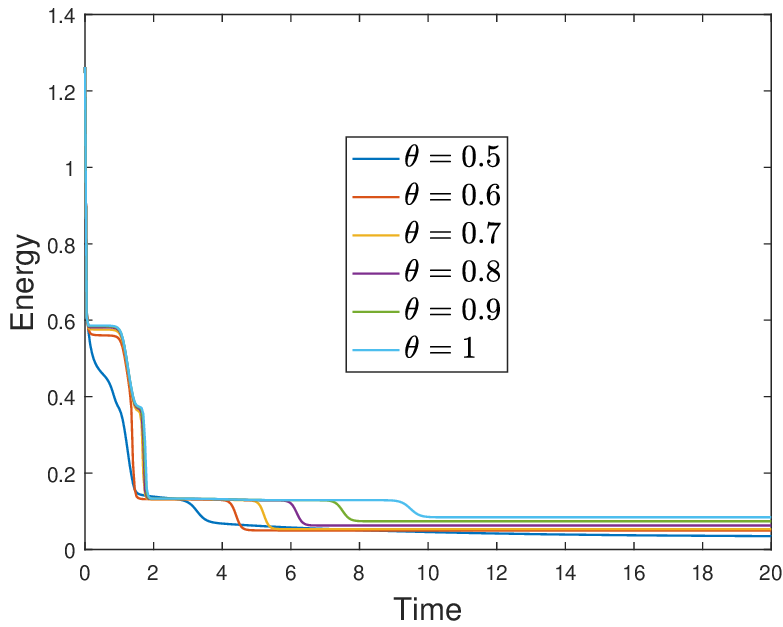}
\includegraphics[width=0.45\textwidth]{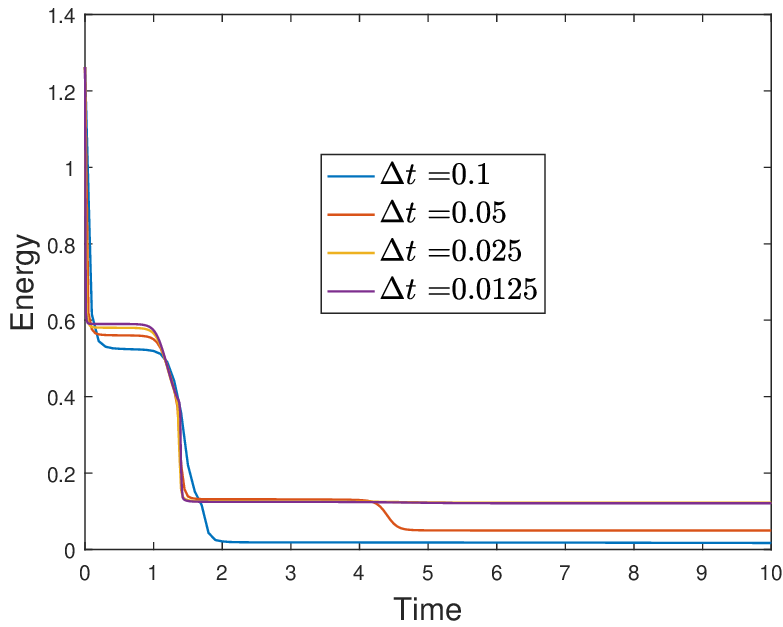}
\caption{The modified energy evolution of the 3-component D-CAC model}
\label{figure:Energy_3_D_CAC}
\end{figure}

\begin{figure}[!htp]
\centering
\includegraphics[width=0.32\textwidth]{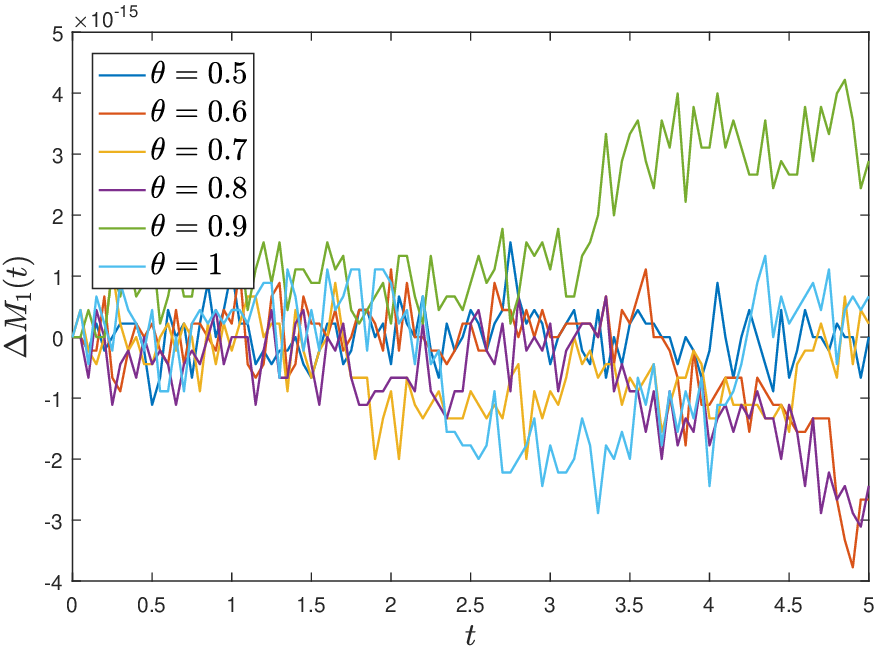}
\includegraphics[width=0.32\textwidth]{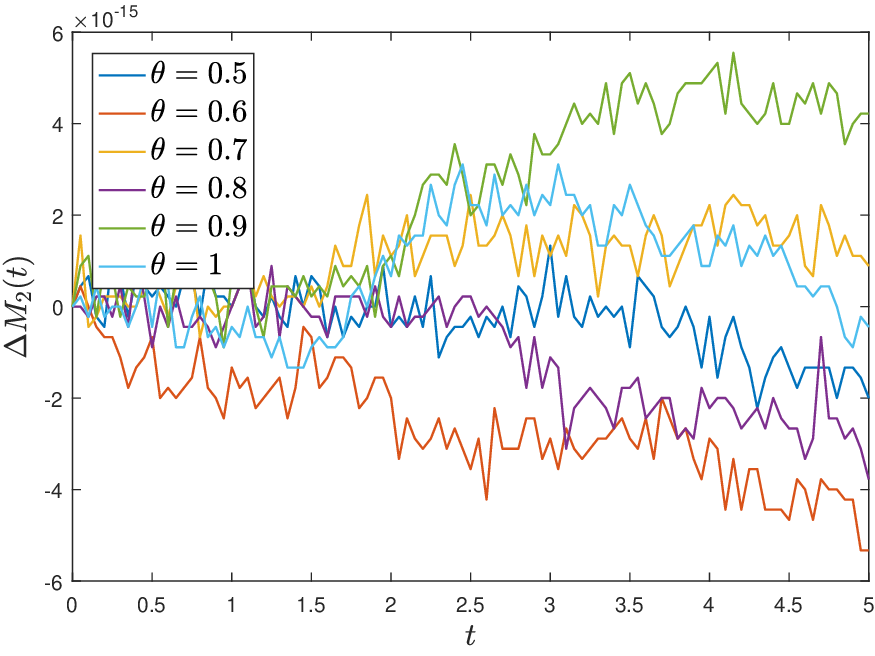}
\includegraphics[width=0.32\textwidth]{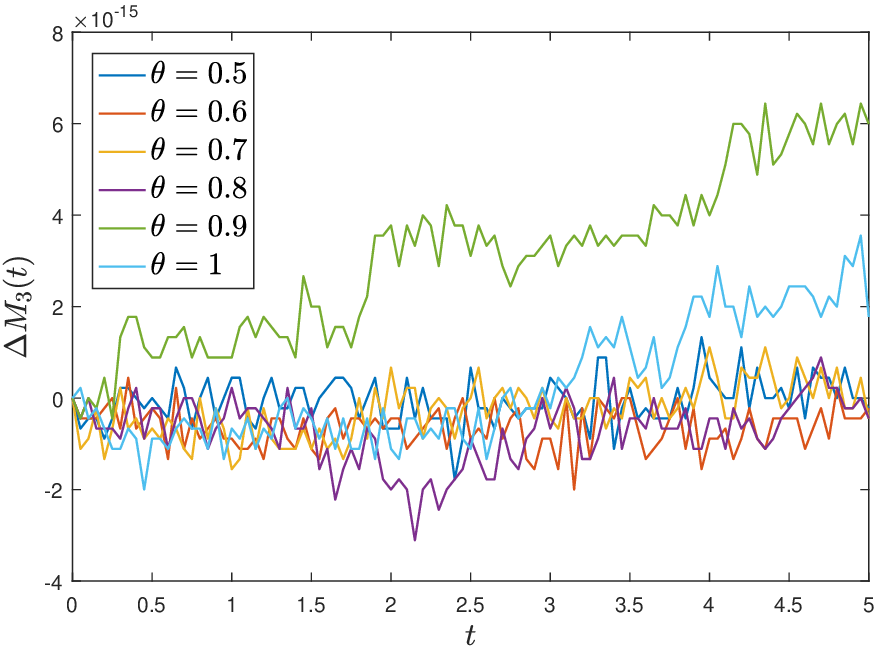}
\caption{The relative errors of mass of the 3-component D-CAC model with $\Delta t = 0.05$}
\label{figure:Mass_Delta005_3_D_CAC}
\end{figure}

\begin{figure}[!htp]
\centering
\includegraphics[width=0.32\textwidth]{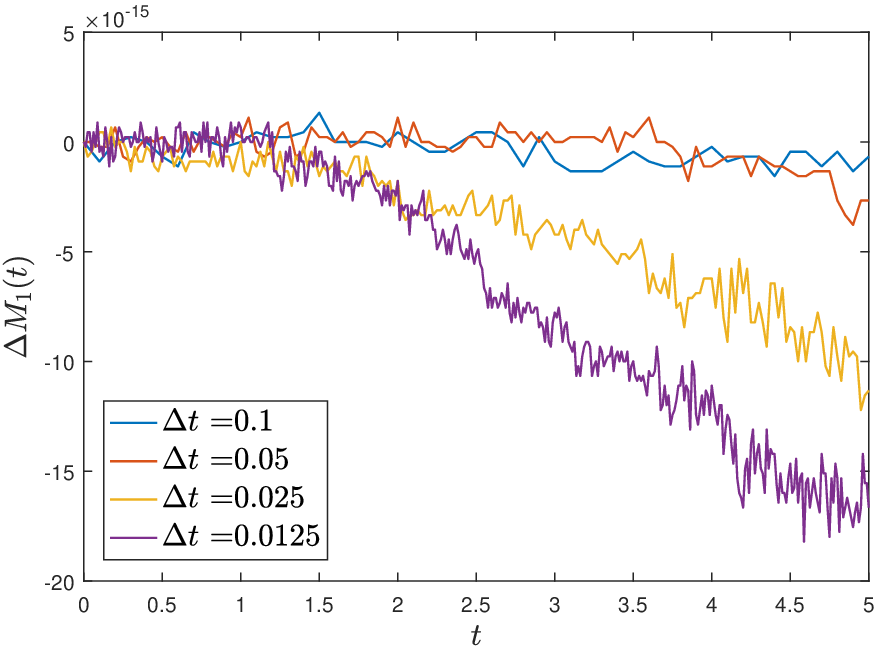}
\includegraphics[width=0.32\textwidth]{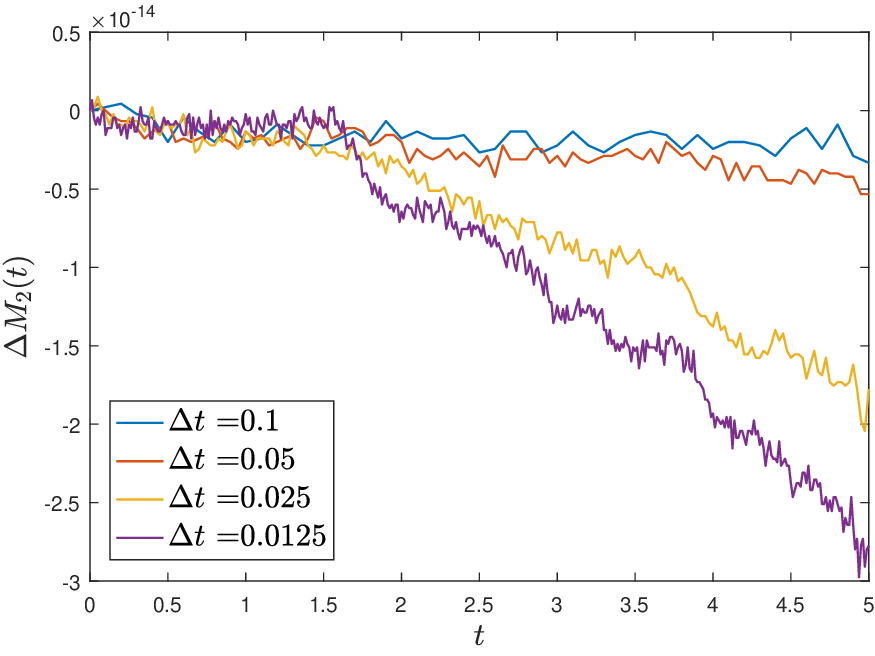}
\includegraphics[width=0.32\textwidth]{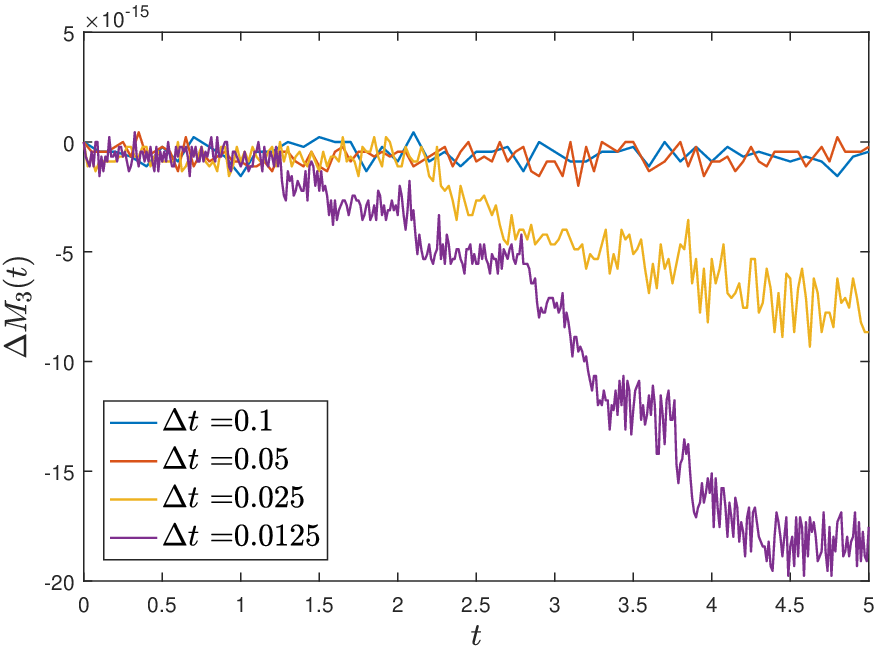}
\caption{The relative errors of mass of the 3-component D-CAC model with $\theta = 0.6$}
\label{figure:Mass_theta06_3_D_CAC}
\end{figure}

\subsection{The phase-separation of 3-component systems}

In this subsection, we study the phase-separation of 3-component NS-CAC model and D-CAC model. For the two models, we take the same initial conditions, i.e.,
\begin{align}
    \left \{
    \begin{aligned}
        &\phi_1(x,y,0)= \frac{1}{3}+0.01(2\mathrm{rand}(x,y)-1), \\
        &\phi_2(x,y,0)= \frac{1}{3}+0.01(2\mathrm{rand}(x,y)-1),\\
        &\phi_3(x,y,0)= \frac{1}{3}+0.01(2\mathrm{rand}(x,y)-1),\\
        &u(x,y,0)=v(x,y,0)=p(x,y,0)=0.
    \end{aligned}
    \right.
    \nonumber
\end{align}
We consider the computational domain $\Omega =[0,1]^2$ and the mesh size $256\times 256$. The parameters $\epsilon = 0.001$, $M = 10$, $\lambda = 0.001$, $C=10$, $\nu = 1$, $\alpha=1000$ are used. In this test, we focus on the case with $\theta=0.6$ and $\Delta t=1e-4$.
Fig. \ref{figure:temporal_evolution_theta06_3_NS_CAC_Phase} displays the temporal evolution of morphologies during phase separation of 3-component NS-CAC model at different
time, where the blue, green and red regions represent $\phi_1$, $\phi_2$, $\phi_3$, respectively.
Fig. \ref{figure:temporal_evolution_theta06_3_NS_CAC_Energy} implies that the total modified energy does not increase in time. For 3-component D-CAC model, we also show the snapshots of phase separation in Fig. \ref{figure:temporal_evolution_theta06_3_D_CAC_Phase} and find that energy dissipation law is also satisfied from Fig. \ref{figure:temporal_evolution_theta06_3_D_CAC_Energy}.

\begin{figure}[!htp]
\centering
\includegraphics[width=0.245\textwidth]{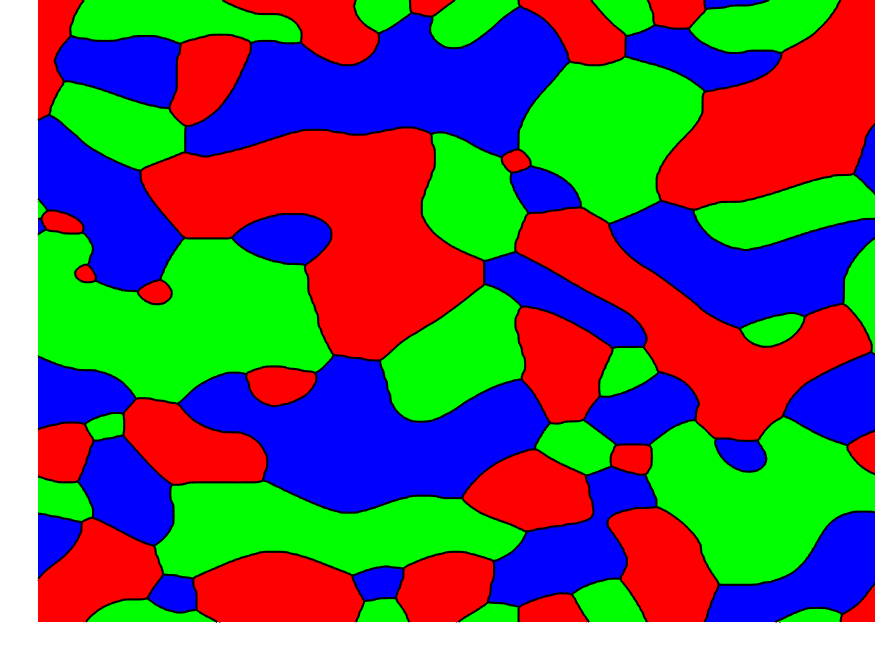}
\includegraphics[width=0.245\textwidth]{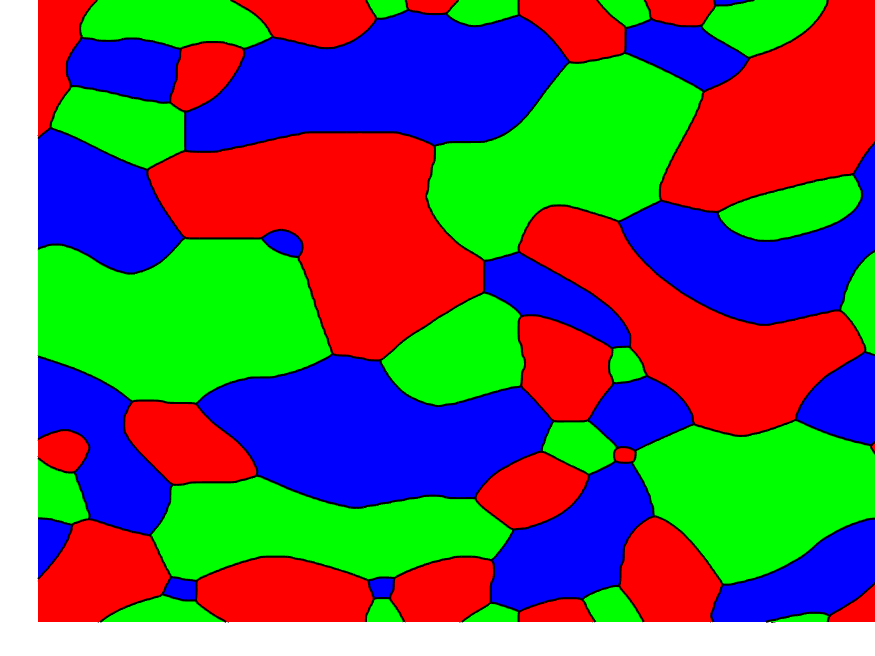}
\includegraphics[width=0.245\textwidth]{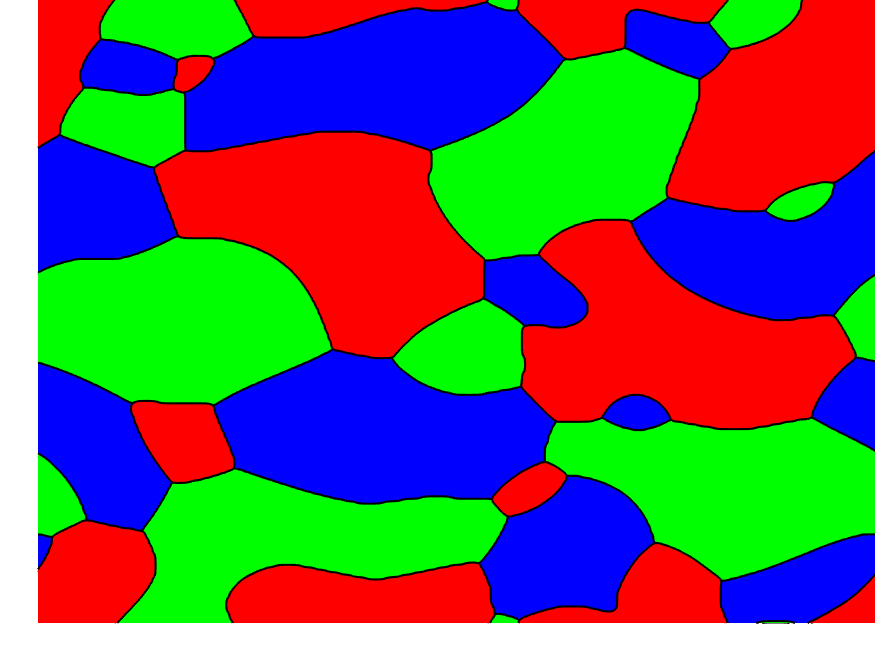}
\includegraphics[width=0.245\textwidth]{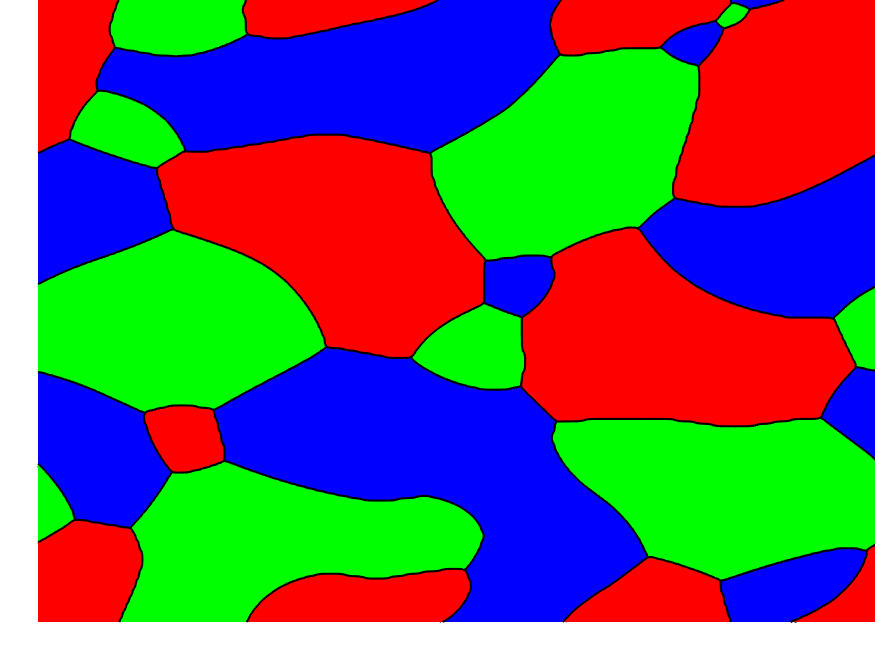}
\caption{The temporal evolution of morphologies during phase separation of 3-component NS-CAC model with $t=0.185, 0.36, 0.655, 1$ (from left to right) when $\theta = 0.6$.}
\label{figure:temporal_evolution_theta06_3_NS_CAC_Phase}
\end{figure}

\begin{figure}[!htp]
\centering
\includegraphics[width=0.6\textwidth]{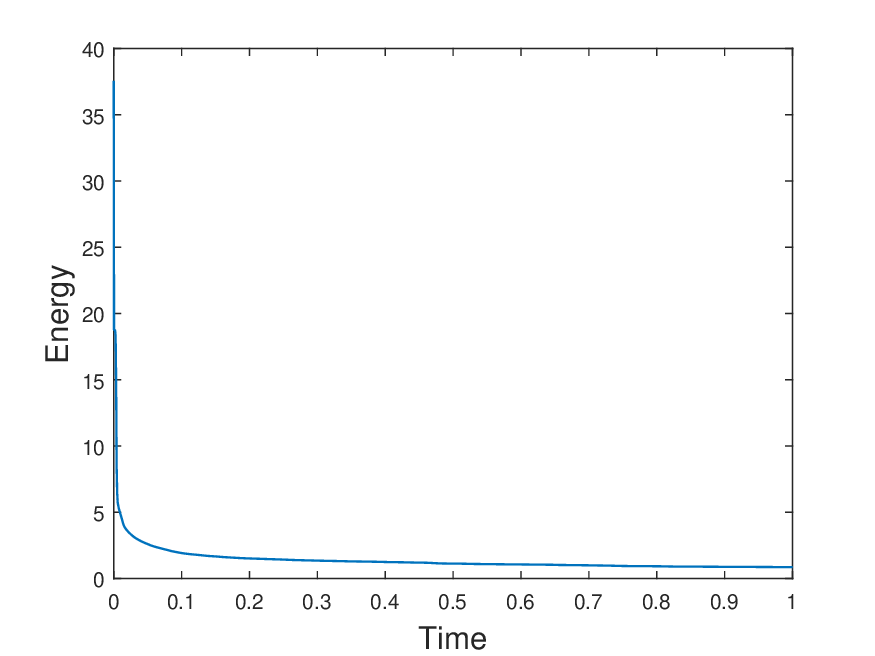}
\caption{The temporal evolution of the total modified energy of 3-component NS-CAC model when $\theta = 0.6$. }
\label{figure:temporal_evolution_theta06_3_NS_CAC_Energy}
\end{figure}

\begin{figure}[!htp]
\centering
\includegraphics[width=0.245\textwidth]{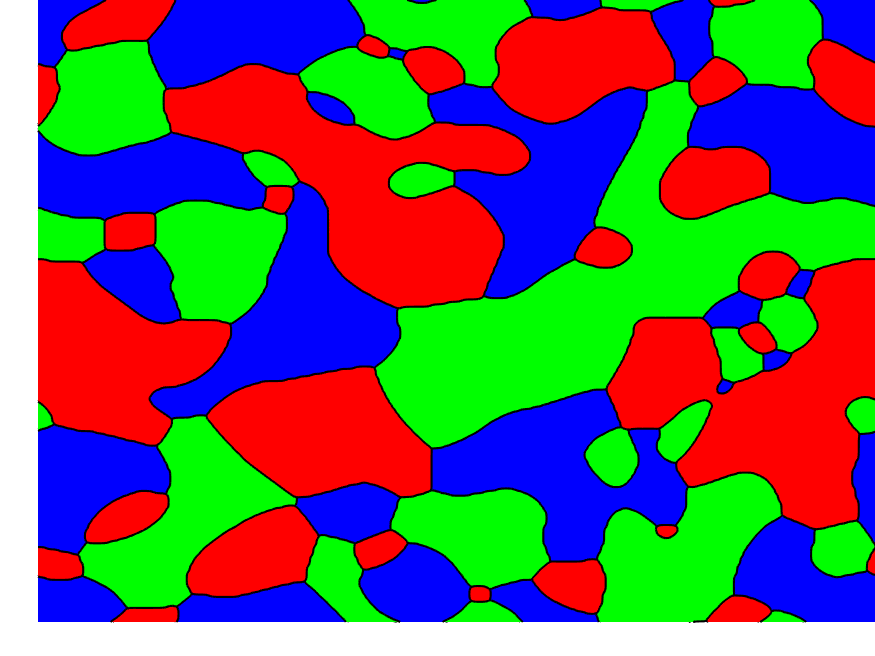}
\includegraphics[width=0.245\textwidth]{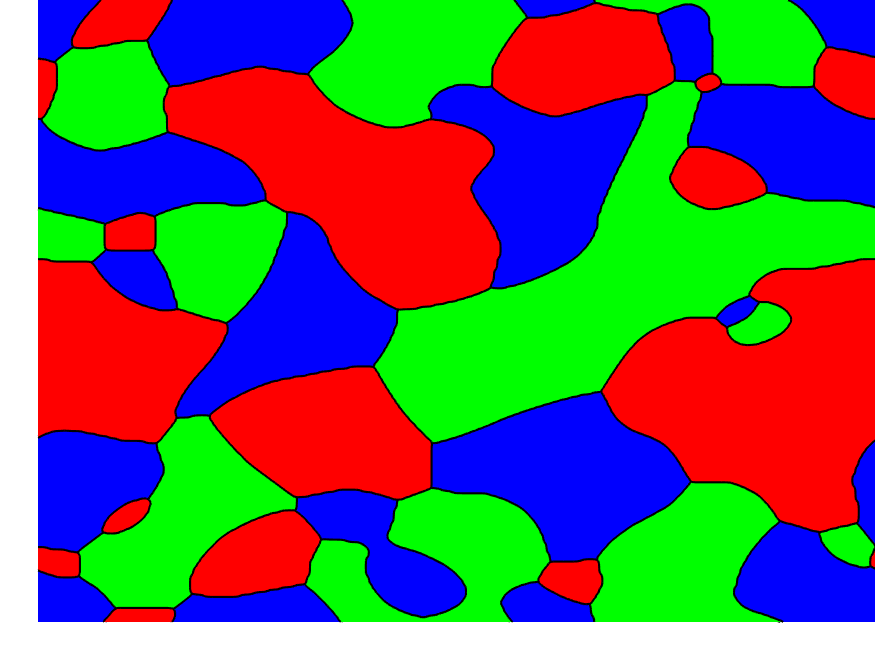}
\includegraphics[width=0.245\textwidth]{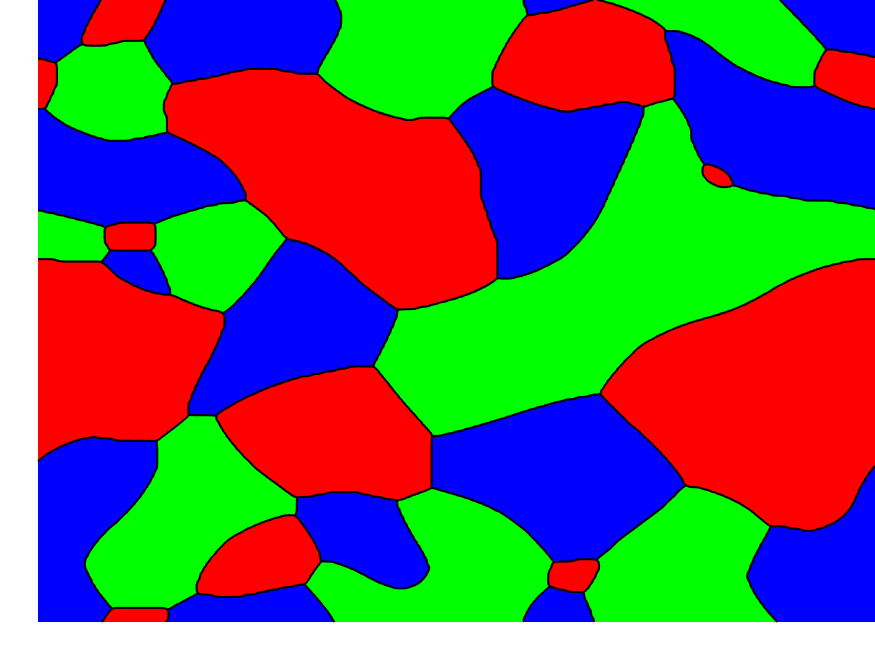}
\includegraphics[width=0.245\textwidth]{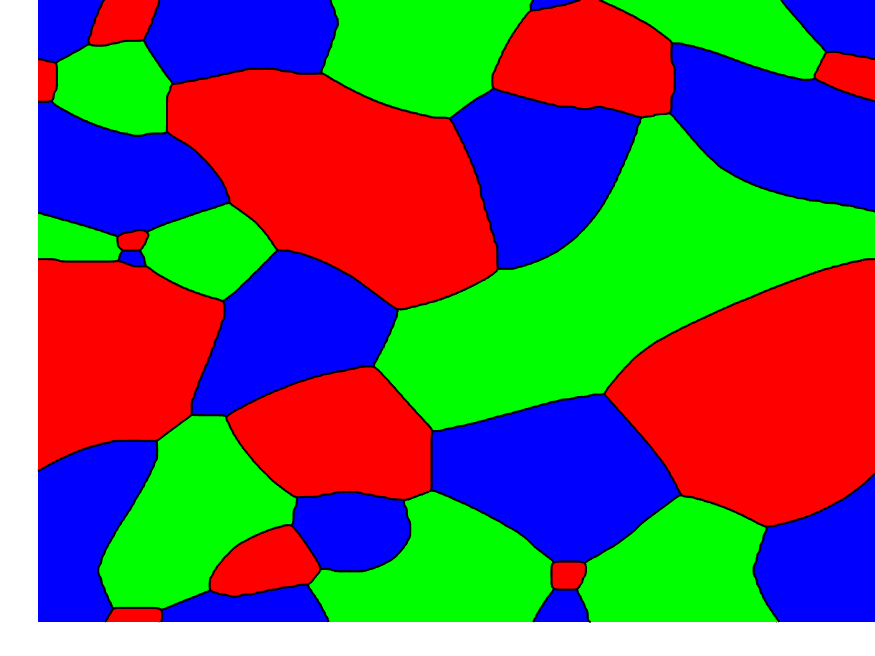}
\caption{The temporal evolution of morphologies during phase separation of 3-component D-CAC model with $t=0.185, 0.36, 0.655, 1$ (from left to right) when $\theta = 0.6$.}
\label{figure:temporal_evolution_theta06_3_D_CAC_Phase}
\end{figure}

\begin{figure}[!htp]
\centering
\includegraphics[width=0.6\textwidth]{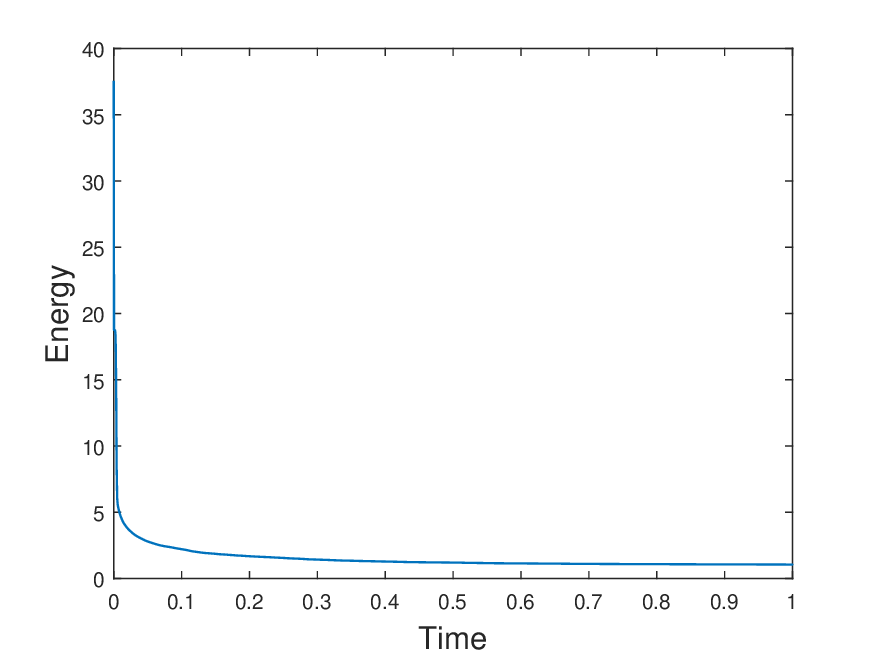}
\caption{The temporal evolution of the total modified energy of 3-component D-CAC model when $\theta = 0.6$. }
\label{figure:temporal_evolution_theta06_3_D_CAC_Energy}
\end{figure}

\section{Conclusions} \label{sec-5}

In this work, we
develop structre-preserving weighted IMEX methods to solve N-component CAC equation copuled with two fluid flow models, i.e., incompressible Navier-Stokes equation and Darcy equation.
The schemes are designed based on
the SAV method, projection method as well as a second-order decoupled IMEX method with arbitrary weight parameter $\theta \in [1/2,1]$.
We rigorously prove the mass conservation and energy stability of the schemes, with the latter employing the idea of G-stability.
The proposed schemes are fully decoupled and explicitly implementable. Moreover, the schemes establish a connection between the CN and BDF2 methods, thereby enhancing the flexibility of the approach.
Finally, through the simulation experiments of two and three components in 2D space, we demonstrate the correctness and effectiveness of the proposed schemes of the theoretical results.

\section*{Acknowledgements}
The work is supported by the
National Natural Science Foundation of China (Nos.11801527,12001499,11971416),
China Postdoctoral Science Foundation (No.2023T160589), Natural Science Foundation of Henan Province (No.222300420256), Training Plan of Young Backbone Teachers in Colleges of Henan Province (No. 2020GGJS230),  Henan University Science and Technology Innovation Talent support program (No.19HASTIT025).

\appendix
\section{Implementation of the schemes}
\subsection{The scheme \eqref{eqn:rrrmodel1_a}-\eqref{eqn:rrrmodel1_g}}\label{A.1}
In the implementation described below, we use the auxiliary variables to split other variables. Specific steps are as follows:
\par
$\bf Step\ \ 1$ \label{Step1}
Using $q^{n+\theta}$, we split $\phi_k^{n+1}, \mu_k^{n+1}, r^{n+1}, {\bf \tilde u}^{n+1}$ into a linear combination form as
\begin{equation}
\label{system1_0}
    \left \{
    \begin{aligned}
        &\phi_k^{n+1}=\phi_{k1}^{n+1}+q^{n+\theta}\phi_{k2}^{n+1},
       &\mu_k^{n+1}=\mu_{k1}^{n+1}+q^{n+\theta}\mu_{k2}^{n+1},  \\
        &r^{n+1}=r_1^{n+1}+q^{n+\theta}r_2^{n+1},
        &{\bf \tilde u}^{n+1}={\bf \tilde u}_1^{n+1}+q^{n+\theta}{\bf \tilde u}_2^{n+1}.
    \end{aligned}
    \right.
\end{equation}
Substituting the above  into \eqref{eqn:rrrmodel1_a}-\eqref{eqn:rrrmodel1_d} and grouping the terms with $q^{n+\theta}$ and without $q^{n+\theta}$, we can obtain four subsystems as follows:
\begin{align}
    &\left \{
    \begin{aligned}
    \label{system1_sub1}
       &\frac{\frac{2\theta+1}{2}\phi_{k1}^{n+1}-2\theta\phi_k^n+\frac{2\theta-1}{2}\phi_k^{n-1}}{\Delta t}
       +M(\theta\mu_{k1}^{n+1}+(1-\theta)\mu_k^n)=0,\\
       &\theta\mu_{k1}^{n+1}+(1-\theta)\mu_k^n
       =\lambda \left(-\Delta(\theta\phi_{k1}^{n+1}+(1-\theta)\phi_k^n)+(\bar{H}_k^{*}+\gamma^{*})(\theta r_1^{n+1}+(1-\theta)r^n)\right),\\
       &\frac{\frac{2\theta+1}{2}r_{1}^{n+1}-2\theta r^n+\frac{2\theta-1}{2}r^{n-1}}{\Delta t}
       =\frac{1}{2}\sum_{k=1}^{N}\int_{\Omega}\bar{H}_k^{*}\frac{\frac{2\theta+1}{2}\phi_{k1}^{n+1}-2\theta\phi_k^n+\frac{2\theta-1}{2}\phi_k^{n-1}}{\Delta t}d{\bf x},
    \end{aligned}
    \right.\\
    &\left \{
    \begin{aligned}
    \label{system1_sub2}
       &\frac{\frac{2\theta+1}{2}\phi_{k2}^{n+1}}{\Delta t}+\nabla \cdot ({\bf u}^{*}\phi_k^{*})
       +M\theta\mu_{k2}^{n+1}=0,\\
       &\theta\mu_{k2}^{n+1}
       =\lambda \left(-\Delta(\theta\phi_{k2}^{n+1})+(\bar{H}_k^{*}+\gamma^{*})\theta r_2^{n+1}\right),\\
       &\frac{\frac{2\theta+1}{2}r_{2}^{n+1}}{\Delta t}
       =\frac{1}{2}\sum_{k=1}^{N}\int_{\Omega}\bar{H}_k^{*}\frac{\frac{2\theta+1}{2}\phi_{k2}^{n+1}}{\Delta t}d{\bf x},
    \end{aligned}
    \right.\\
    &\frac{\frac{2\theta+1}{2}{\bf \tilde u}_{1}^{n+1}-2\theta{\bf u}^n+\frac{2\theta-1}{2}{\bf u}^{n-1}}{\Delta t}-\nu \Delta (\theta {\bf \tilde u}_{1}^{n+1}+(1-\theta){\bf u}^n)+\nabla p^n=0,
    \label{system1_sub3}\\
    &\frac{\frac{2\theta+1}{2}{\bf \tilde u}_{2}^{n+1}}{\Delta t}+{\bf u}^{*}\cdot \nabla {\bf u}^{*}-\nu \Delta (\theta {\bf \tilde u}_{2}^{n+1})
    +\sum_{k=1}^{N}\phi_k^{*}\nabla \mu_k^{*}=0,\label{system1_sub4}
\end{align}
where the boundary conditions are required be periodic or satisfy the following conditions
\begin{equation}
    \left. \nabla\phi_{k1}^{n+1}\cdot{\bf n}\right|_{\partial\Omega}
    =\left.\nabla\phi_{k2}^{n+1}\cdot{\bf n}\right|_{\partial\Omega}=0,
    \left.
    {\bf \tilde u}^{n+1}_1\right|_{\partial\Omega}=\left.
    {\bf \tilde u}^{n+1}_2\right|_{\partial\Omega}={\bf 0}.
\end{equation}
It is very easy to solve ${\bf \tilde u}_1^{n+1}$ and ${\bf \tilde u}_2^{n+1}$, since the subsystems \eqref{system1_sub3} and \eqref{system1_sub4} are both linear elliptic.

In addition, applying the integration by parts and the definitions of $\bar{H}_k^{*}$ and $\gamma^{*}$, we get
\begin{equation}
    (\bar{H}_k^{*},1)=0,\ \ (\gamma^{*},1)=0,\ \
    \left(\nabla\cdot({\bf u}^{*}\phi_k^{*}),1\right)=0,\ \
    (-\Delta\phi_{ki}^{n+1},1)=0,\ \ i=1,2.
\end{equation}
For the first two equations of \eqref{system1_sub2}, we both take the $L^2$-inner products with 1 and then have $\int_{\Omega}\phi_{k2}^{n+1}d{\bf x}=0$, which is obtained to prove that $q^{n+1}$ or $q^{n+\theta}$ is solvable in $\bf Step\ \ 3$.
\par
$\bf Step\ \ 2$ \label{Step2}
From the subsystem \eqref{system1_sub1}, using $r_1^{n+1}$, we divide $\phi_{k1}^{n+1}$, $\mu_{k1}^{n+1}$ into two parts: $\phi_{k1}^{n+1}=\phi_{k11}^{n+1}+r_1^{n+1}\phi_{k12}^{n+1}$, $\mu_{k1}^{n+1}=\mu_{k11}^{n+1}+r_1^{n+1}\mu_{k12}^{n+1}$ and obtain
\begin{align}
    &\left \{
    \begin{aligned}
    \label{system1_sub1.1}
       &\frac{\frac{2\theta+1}{2}\phi_{k11}^{n+1}-2\theta\phi_k^n+\frac{2\theta-1}{2}\phi_k^{n-1}}{\Delta t}
       +M(\theta\mu_{k11}^{n+1}+(1-\theta)\mu_k^n)=0,\\
       &\theta\mu_{k11}^{n+1}+(1-\theta)\mu_k^n
       =\lambda \left(-\Delta(\theta\phi_{k11}^{n+1}+(1-\theta)\phi_k^n)+(\bar{H}_k^{*}+\gamma^{*})(1-\theta)r^n\right),
    \end{aligned}
    \right.\\
    &\left \{
    \begin{aligned}
    \label{system1_sub1.2}
       &\frac{\frac{2\theta+1}{2}\phi_{k12}^{n+1}}{\Delta t}
       +M\theta\mu_{k12}^{n+1}=0,\\
       &\theta\mu_{k12}^{n+1}
       =\lambda \left(-\Delta(\theta\phi_{k12}^{n+1})+(\bar{H}_k^{*}+\gamma^{*})\theta\right),
    \end{aligned}
    \right.\\
    &\left(\frac{2\theta+1}{2}-\frac{1}{2}\sum_{k=1}^{N}\int_{\Omega}\bar{H}_k^{*}\frac{2\theta+1}{2}\phi_{k12}^{n+1}d{\bf x}\right)r_{1}^{n+1}
    \nonumber\\
    &=(2\theta r^n-\frac{2\theta-1}{2}r^{n-1})
    +\frac{1}{2}\sum_{k=1}^{N}
    \int_{\Omega}\bar{H}_k^{*}\left(\frac{2\theta+1}{2}\phi_{k11}^{n+1}-2\theta\phi_k^n+\frac{2\theta-1}{2}\phi_k^{n-1}\right)d{\bf x},
    \label{system1_sub1.3}
\end{align}
where the boundary conditions are periodic or $\left. \nabla\phi_{k1j}^{n+1}\cdot{\bf n}\right|_{\partial\Omega}=0\ \ (j=1,2)$. Then $\phi_{k11}^{n+1}$, $\mu_{k11}^{n+1}$, $\phi_{k12}^{n+1}$ and $\mu_{k12}^{n+1}$ can be updated by solving the linear elliptic equations \eqref{system1_sub1.1} and \eqref{system1_sub1.2}.
\par
Moreover, we can find that \eqref{system1_sub1.3} is solvable. From \eqref{system1_sub1.2}, we can rewrite it in the following form by eliminating $\mu_{k12}$
\begin{equation}
\label{eqn:A.9_1}
    \frac{2\theta+1}{2\Delta t}\phi_{k12}^{n+1}
    +M\lambda\theta(-\Delta\phi_{k12}^{n+1})
    +M\lambda\theta\bar{H}_k^{*}
    +M\lambda\theta\gamma^{*}=0,
\end{equation}
which can easily obtain $\int_{\Omega}\phi_{k12}^{n+1}d{\bf x}=0$ by taking the $L^2$-inner product with 1 and using the integration by parts. Then, computing the $L^2$-inner product of \eqref{eqn:A.9_1} with $\phi_{k12}^{n+1}$ and applying $\int_{\Omega}\phi_{k12}^{n+1}d{\bf x}=0$, we have
\begin{equation}
    -\int_{\Omega}\bar{H}_k^{*}\phi_{k12}^{n+1}d{\bf x}
    = \frac{2\theta+1}{2M\lambda\theta\Delta t}\left\|\phi_{k12}^{n+1}\right\|^2
    +\left\|\nabla\phi_{k12}^{n+1}\right\|^2\geq 0,
\end{equation}
which means that $\frac{2\theta+1}{2}-\frac{1}{2}\sum_{k=1}^{N}\int_{\Omega}\bar{H}_k^{*}\frac{2\theta+1}{2}\phi_{k12}^{n+1}d{\bf x}>0$ always holds. Therefore, we can update $r_1^{n+1}$ by using \eqref{system1_sub1.3} in an explicit way, and then obtain $\phi_{k1}^{n+1}$ and $\mu_{k1}^{n+1}$.

Similarly, using $r^{n+1}$, \eqref{system1_sub2} can be reformulated as
\begin{align}
    &\left \{
    \begin{aligned}
    \label{system1_sub2.1}
       &\frac{\frac{2\theta+1}{2}\phi_{k21}^{n+1}}{\Delta t}+\nabla \cdot ({\bf u}^{*}\phi_k^{*})
       +M\theta\mu_{k21}^{n+1}=0,\\
       &\theta\mu_{k21}^{n+1}
       =\lambda (-\Delta(\theta\phi_{k21}^{n+1})),
    \end{aligned}
    \right.\\
    &\left \{
    \begin{aligned}
    \label{system1_sub2.2}
       &\frac{\frac{2\theta+1}{2}\phi_{k22}^{n+1}}{\Delta t}+M\theta\mu_{k22}^{n+1}=0,\\
       &\theta\mu_{k22}^{n+1}
       =\lambda \left(-\Delta(\theta\phi_{k22}^{n+1})+(\bar{H}_k^{*}+\gamma^{*})\theta\right),
    \end{aligned}
    \right.\\
     &\left(\frac{2\theta+1}{2}-\frac{1}{2}\sum_{k=1}^{N}\int_{\Omega}\bar{H}_k^{*}\frac{2\theta+1}{2}\phi_{k22}^{n+1}\right)r_{2}^{n+1}
     =\frac{1}{2}\sum_{k=1}^{N}\int_{\Omega}\bar{H}_k^{*}\frac{2\theta +1}{2}\phi_{k21}^{n+1}d{\bf x},
     \label{system1_sub2.3}
\end{align}
where the boundary conditions are periodic or $\left. \nabla\phi_{k2j}^{n+1}\cdot{\bf n}\right|_{\partial\Omega}=0\ \ (j=1,2)$ and we can easily update $\phi_{k2}^{n+1}$ and $\mu_{k2}^{n+1}$. Noticing that \eqref{system1_sub2.2} is the same as \eqref{system1_sub1.2}, we find \eqref{system1_sub2.3} is also solvable due to $\phi_{k22}=\phi_{k12}$, resulting in $\phi_{k2}^{n+1}$ and $\mu_{k2}^{n+1}$.
\par
$\bf Step\ \ 3$ \label{Step3}
We solve $q^{n+1}$ in \eqref{eqn:rrrmodel1_g}. From \eqref{eqn:rrrmodel1_h}, we get
\begin{equation}
\label{q_n+1_theta}
    q^{n+\theta}=\theta q^{n+1}+(1-\theta)q^n \Rightarrow q^{n+1}=\frac{q^{n+\theta}-(1-\theta)q^n}{\theta}.
\end{equation}
Substituting $q^{n+1}$ into \eqref{eqn:rrrmodel1_g} and using the linear combination forms for the variables $\mu_k^{n+1}$ and ${\bf \tilde u}^{n+1}$
in \eqref{system1_0},  \eqref{eqn:rrrmodel1_g} is formulated into the following form:
\begin{equation}
\label{system1_sub5}
    \left(\frac{2\theta+1}{2\theta\Delta t}-\eta_1\right)q^{n+\theta}=\frac{2\theta^2+\theta+1}{2\theta\Delta t}q^{n}-\frac{2\theta-1}{2\Delta t}q^{n-1}+\eta_2,
\end{equation}
where
\begin{align*}
    \eta_1=&\sum_{k=1}^{N}
    \int_{\Omega}\nabla \cdot ({\bf u}^{*}\phi_k^{*})(\theta\mu_{k2}^{n+1})d{\bf x}
    +\sum_{k=1}^{N}
    \int_{\Omega}\phi_k^{*}\nabla \mu_k^{*}\cdot (\theta{\bf \tilde u}_2^{n+1})d{\bf x}
    +\int_{\Omega}{\bf u}^{*}\cdot  \nabla{\bf u}^{*}\cdot (\theta{\bf \tilde u}_2^{n+1})d{\bf x},\\
    \eta_2
    =&\sum_{k=1}^{N}
    \int_{\Omega}\nabla \cdot ({\bf u}^{*}\phi_k^{*})(\theta\mu_{k1}^{n+1}+(1-\theta)\mu_k^n)d{\bf x}
    +\sum_{k=1}^{N}
    \int_{\Omega}\phi_k^{*}\nabla \mu_k^{*}\cdot (\theta{\bf \tilde u}_1^{n+1}+(1-\theta){\bf u}^n)d{\bf x}\\
    &+\int_{\Omega}{\bf u}^{*} \cdot \nabla{\bf u}^{*} \cdot (\theta{\bf \tilde u}_1^{n+1}+(1-\theta){\bf u}^n)d{\bf x}.\nonumber
\end{align*}
Obviously, all terms contained in $\eta_1$ and $\eta_2$ have been obtained in $\bf Step\ \ 1$ and $\bf Step\ \ 2$. Thus, once \eqref{system1_sub5} is solvable, we can update $q^{n+\theta}$ in an explicit way, which further allows us to get $q^{n+1}$, $\phi_k^{n+1}$, $\mu_k^{n+1}$, $r^{n+1}$ and ${\bf \tilde u}^{n+1}$.\par
Next, we prove the solvability of the equation \eqref{system1_sub5}. From \eqref{system1_sub2}, by taking the $L^2$-inner products of the first, second, and third equation with $\theta\mu_{k2}^{n+1}$, $\frac{2\theta+1}{2\Delta t}\phi_{k2}^{n+1}$ and $2\theta r_2^{n+1}$, respectively, we obtain
\begin{align}
    &\left(\frac{\frac{2\theta+1}{2}\phi_{k2}^{n+1}}{\Delta t},\theta\mu_{k2}^{n+1}\right)
    +(\nabla \cdot ({\bf u}^{*}\phi_k^{*}),\theta\mu_{k2}^{n+1})
    +M\theta^2\left\|\mu_{k2}^{n+1}\right\|^2=0,
    \label{system1_sub2_1}\\
    &\left(\theta\mu_{k2}^{n+1},\frac{2\theta+1}{2\Delta t}\phi_{k2}^{n+1}\right)
    =\lambda\theta \frac{2\theta+1}{2\Delta t}
    (-\Delta\phi_{k2}^{n+1},
    \phi_{k2}^{n+1})
    +\lambda\theta\left(\bar{H}_k^{*}\theta r_2^{n+1},\frac{2\theta+1}{2\Delta t}\phi_{k2}^{n+1}\right),
    \label{system1_sub2_2}\\
    &\frac{(2\theta+1)\theta}{\Delta t}\left |r_{2}^{n+1}\right |^2
    =\sum_{k=1}^{N}\int_{\Omega}\bar{H}_k^{*}\frac{\frac{2\theta+1}{2}\phi_{k2}^{n+1}}{\Delta t}\theta r_2^{n+1}d{\bf x},
    \label{system1_sub2_3}
\end{align}
where the term $\lambda\theta(\gamma^{*}\theta r_2^{n+1},\frac{2\theta+1}{2\Delta t}\phi_{k2}^{n+1})$ is omited since $\int_{\Omega}\phi_{k2}^{n+1}d{\bf x}=0$ has already been demonstrated in $\bf Step\ \ 1$. Combining the above equations, we have
\begin{align}
    -\sum_{k=1}^N
    (\nabla \cdot ({\bf u}^{*}\phi_k^{*}),\theta\mu_{k2}^{n+1})
    =&M\theta^2\left\|\mu_{k2}^{n+1}\right\|^2
    +\frac{\lambda\theta(2\theta+1)}{2\Delta t}
    \left\|\nabla\phi_{k2}^{n+1}\right\|^2
    +\frac{\lambda(2\theta+1)\theta^2}{\Delta t}\left |r_{2}^{n+1}\right |^2\geq 0.
    \label{system1_sub2_12}
\end{align}
Taking the $L^2$-inner product of \eqref{system1_sub4} with $\theta {\bf \tilde u}_{2}^{n+1}$ gets
\begin{equation}
    -({\bf u}^{*}\cdot \nabla {\bf u}^{*},\theta {\bf \tilde u}_{2}^{n+1})
    -\sum_{k=1}^{N}(\phi_k^{*}\nabla \mu_k^{*},\theta {\bf \tilde u}_{2}^{n+1})
    =\frac{(2\theta+1)\theta}{2\Delta t}\left\|{\bf \tilde u}_{2}^{n+1}\right\|^2
    +\nu\theta^2 \left\|\nabla {\bf \tilde u}_{2}^{n+1}\right\|^2\geq 0.
\end{equation}
According to the above two equations, we obtain $-\eta_1\geq 0$, and then $\frac{2\theta+1}{2\theta\Delta t}-\eta_1>0$ always holds, which indicates that \eqref{system1_sub5} is solvable.
\par
$\bf Step\ \ 4$ \label{Step4}
From Remark \ref{remark_Poisson}, we can update $p^{n+1}$ and ${\bf u}$ since ${\bf \tilde u}^{n+1}$ has been already computed in $\bf Step\ \ 3$.
\par
Thus, the scheme \eqref{eqn:rrrmodel1_a}-\eqref{eqn:rrrmodel1_g} is solvable.

\subsection{The scheme \eqref{eqn:rrrmodel2_a}-\eqref{eqn:rrrmodel2_g}}
\label{A.2}
Since the equations \eqref{eqn:rrrmodel2_a}-\eqref{eqn:rrrmodel2_c} are similar to \eqref{eqn:rrrmodel1_a}-\eqref{eqn:rrrmodel1_c}, we omit the calculation of them and only focus on \eqref{eqn:rrrmodel2_d}-\eqref{eqn:rrrmodel2_f}. The specific processes are as follows:
\par
$\bf Step\ \ 1$ \
We decompose ${\bf \tilde u}^{n+1}$ into the following form by using $q^{n+\theta}$
\begin{equation}
    {\bf \tilde u}^{n+1}={\bf \tilde u}_1^{n+1}+q^{n+\theta}{\bf \tilde u}_2^{n+1}.
\end{equation}
Substituting the above equation into \eqref{eqn:rrrmodel2_d} and grouping the terms with $q^{n+\theta}$ and without $q^{n+\theta}$, we can obtain
\begin{align*}
    &\tau\frac{\frac{2\theta+1}{2}{\bf \tilde u}_{1}^{n+1}-2\theta{\bf u}^n+\frac{2\theta-1}{2}{\bf u}^{n-1}}{\Delta t}
    +\alpha \nu (\theta {\bf \tilde u}_{1}^{n+1}+(1-\theta){\bf u}^n)+\nabla p^n=0,\\
    &\tau\frac{\frac{2\theta+1}{2}{\bf \tilde u}_{2}^{n+1}}{\Delta t}
    +\alpha \nu (\theta {\bf \tilde u}_{2}^{n+1})
    +\sum_{k=1}^{N}\phi_k^{*}\nabla \mu_k^{*}=0,
\end{align*}
that is,
\begin{align}
    &\left(\tau\frac{2\theta+1}{2\Delta t}
    +\alpha\nu\theta\right)
    {\bf \tilde u}_{1}^{n+1}
    =\left(\tau\frac{2\theta}{\Delta t}
    -\alpha\nu(1-\theta)\right)
    {\bf u}^{n}
    -\tau\frac{2\theta-1}{2\Delta t}{\bf u}^{n-1}
    -\nabla p^n,
    \label{system2_sub1}\\
    &\left(\tau\frac{2\theta+1}{2\Delta t}
    +\alpha\nu\theta\right)
    {\bf \tilde u}_{2}^{n+1}
    =-\sum_{k=1}^{N}\phi_k^{*}
    \nabla \mu_k^{*}.
    \label{system2_sub2}
\end{align}
Clearly, \eqref{system2_sub1} and \eqref{system2_sub2} are both solvable since $\tau\frac{2\theta+1}{2\Delta t}
+\alpha\nu\theta>0$ always holds. Then ${\bf \tilde u}_{1}$ and ${\bf \tilde u}_{2}$ can be easily obtained in an explicit way.
\par
$\bf Step\ \ 2$ \
From \eqref{q_n+1_theta}, by replacing $q^{n+1}$ with $\frac{q^{n+\theta}-(1-\theta)q^n}{\theta}$, we have
\begin{equation}
\label{system2_sub3}
    \left(\frac{2\theta+1}{2\theta\Delta t}-\eta_1\right)q^{n+\theta}=\frac{2\theta^2+\theta+1}{2\theta\Delta t}q^{n}-\frac{2\theta-1}{2\Delta t}q^{n-1}+\eta_2.
\end{equation}
Here, the expressions of $\eta_1$ and $\eta_2$ are as follows:
\begin{align}
    &\eta_1=\sum_{k=1}^{N}
    \int_{\Omega}\nabla \cdot ({\bf u}^{*}\phi_k^{*})(\theta\mu_{k2}^{n+1})d{\bf x}
    +\sum_{k=1}^{N}
    \int_{\Omega}\phi_k^{*}\nabla \mu_k^{*}\cdot (\theta{\bf \tilde u}_2^{n+1})d{\bf x},
    \nonumber\\
    &\begin{aligned}
        \eta_2
        =&\sum_{k=1}^{N}
        \int_{\Omega}\nabla \cdot ({\bf u}^{*}\phi_k^{*})(\theta\mu_{k1}^{n+1}+(1-\theta)\mu_k^n)d{\bf x}
        +\sum_{k=1}^{N}
        \int_{\Omega}\phi_k^{*}\nabla \mu_k^{*}\cdot (\theta{\bf \tilde u}_1^{n+1}+(1-\theta){\bf u}^n)d{\bf x}.
    \end{aligned}\nonumber
\end{align}
In order to illustrate the equation is solvable, by taking the $L^2$-inner product of \eqref{system2_sub2} with $\theta {\bf \tilde u}_{2}^{n+1}$, we obtain
\begin{equation}
    -\sum_{k=1}^{N}(\phi_k^{*}
    \nabla \mu_k^{*},\theta {\bf \tilde u}_{2}^{n+1})
    =\left(\tau\frac{2\theta+1}{2\Delta t}
    +\alpha\nu\theta\right)\theta
    \left\|{\bf \tilde u}_{2}^{n+1}\right\|^2\geq 0.
\end{equation}
From \eqref{system1_sub2_1}-\eqref{system1_sub2_12}, we still have
$-\sum_{k=1}^N(\nabla \cdot ({\bf u}^{*}\phi_k^{*}),\theta\mu_{k2}^{n+1})\geq 0$, which further leads to $\frac{2\theta+1}{2\theta\Delta t}-\eta_1>0$. Therefore, we can obtain $q^{n+\theta}$ in an explicit way and then update ${\bf \tilde u}^{n+1}$.
\par
$\bf Step\ \ 3$ \
Applying the divergence operator to \eqref{eqn:rrrmodel2_e} and using \eqref{eqn:rrrmodel2_f}, we can still get the following Poisson equation which is similar to \eqref{eqn:Poisson}
\begin{equation}
\label{eqn:Poisson2}
    \Delta p^{n+1}=\Delta p^n+\tau\frac{2\theta+1}{2\theta\Delta t}\nabla \cdot {\bf \tilde u}^{n+1}.
\end{equation}
On the basis of Remark \ref{remark_Poisson},  we gain the unique solution of \eqref{eqn:Poisson2}. Since all variables are given or computed, we can easily obtain $\bf u^{n+1}$ from \eqref{eqn:rrrmodel2_e}, i.e.,
\begin{equation}
    {\bf u}^{n+1}=-\frac{2\theta\Delta t}{\tau(2\theta+1)}(\nabla p^{n+1}-\nabla p^{n})+{\bf \tilde u}^{n+1}.
\end{equation}
\par
Thus, the scheme \eqref{eqn:rrrmodel2_a}-\eqref{eqn:rrrmodel2_g} is also solvable.

\bibliographystyle{elsarticle-num}
\bibliography{thebib}

\end{document}